\newtheorem{theorem}{Theorem}[section]
\newtheorem{lemma}[theorem]{Lemma}
\newtheorem{proposition}[theorem]{Proposition}
\newtheorem{corollary}[theorem]{Corollary}
\theoremstyle{definition}
\theoremstyle{remark}
\newtheorem{remark}[theorem]{Remark}
\numberwithin{equation}{section}
\begin{document}

\title{Two Dimensional NLS Equation With Random Radial Data}

\author{Yu Deng}
\address{Massachusetts Institute of Technology, Cambridge, MA 02139}
\email{yudeng@mit.edu}

\thanks{}

\subjclass[2010]{35K55, 37L50, 37K05, 37L40}
\keywords{nonlinear Schr\"{o}dinger equation, supercritical NLS, random data, Gibbs measure, global well-posedness.}

\date{}

\dedicatory{}

\begin{abstract}
In this paper we study radial solutions of certain two-dimensional nonlinear Schr\"{o}dinger equation with harmonic potential, which is supercritical with respect to the initial data. By combining the nonlinear smoothing effect of Schr\"{o}dinger equation with $L^{p}$ estimates of Laguerre functions, we are able to prove an almost sure global well-posedness result and the invariance of the Gibbs measure. We also discuss an application to the NLS equation without harmonic potential.
\end{abstract}

\maketitle


\setlength{\parskip}{0.8ex}
\section{Introduction}
In Burq-Thomann-Tzvetkov \cite{BTT10}, the authors studied the nonlinear Schr\"{o}dinger (NLS) equation on $\mathbb{R}\times\mathbb{R}^{d}$ with harmonic potential\begin{equation}\label{nls2}
\mathrm{i}\partial_{t}u+(\Delta-|x|^{2})u=\pm|u|^{p-1}u,
\end{equation} where the space dimension was one. The purpose of this paper is to extend their results to two space dimensions. We will prove almost sure global well-posedness with respect to a Gaussian measure supported on $\cap_{\delta>0}\mathcal{H}^{-\delta}$ (see Section \ref{bour} for definition), and construct the Gibbs measure, absolutely continuous with respect to this Gaussian, which we prove to be invariant.

We also study the NLS equation on $\mathbb{R}\times\mathbb{R}^{d}$ without harmonic potential\begin{equation}\label{nls1}
\mathrm{i}\partial_{t}u+\Delta u=\pm |u|^{p-1}u.
\end{equation} In \cite{BTT10}, it was noticed that using an explicit transform (referred to as the lens transform in Tao \cite{Ta09}), we can obtain local and global well-posedness results of equation (\ref{nls1}) from the corresponding results of (\ref{nls2}). This issue is also pursued here.

Like earlier papers on random data theory of NLS equations in two or more dimensions (with the exception of Bourgain \cite{Bo96}), we only consider radial solutions. In the defocusing case in two dimensions, we have almost sure global well-posedness and measure invariance for (\ref{nls2}), and almost sure global well-posedness and scattering for (\ref{nls1}), when $p\geq 3$ is an odd integer; in the focusing case, we have the same results only for (\ref{nls2}), when $1<p<3$.
\subsection{NLS equation and probabilistic methods}

The nonlinear Schr\"{o}dinger equation (\ref{nls1}) and its periodic variant (which is solved on $\mathbb{R}\times\mathbb{T}^{d}$) have been extensively studied over the last several decades. Beginning from Lebowitz-Rose-Speer \cite{LRS89} and Bourgain \cite{Bo94}, \cite{Bo96}, it has been observed that low regularity local and global solutions to (\ref{nls1}) on $\mathbb{R}\times\mathbb{T}^{d}$ can be obtained via randomization of initial data and construction of Gibbs measure. This idea was later developed in a number of papers, for instance Burq-Tzvetkov \cite{BT08}, \cite{BT08ii}, Nahmod-Oh-Bellet-Staffilani \cite{NORS10}, Oh \cite{Oh09}, \cite{Oh09ii}, Thomann-Tzvetkov \cite{TT10}, Tzvetkov \cite{Tz06}, \cite{Tz08}. In Burq-Thomann-Tzvetkov \cite{BTT10}, the above-mentioned method was first used to study the equation (\ref{nls2}).

There are three reasons why (\ref{nls2}) is worth studying. First, the spectrum of the harmonic oscillator $\mathbf{H}=-\Delta+|x|^{2}$ is discrete, so (\ref{nls2}) can be approximated by ODEs, and the current techniques of constructing Gibbs measure apply at least formally. Second, (\ref{nls2}) is solved on $\mathbb{R}\times\mathbb{R}^{d}$ where the space domain is non-compact, while the proceeding works usually involve a compact manifold. Also (\ref{nls2}) is related to (\ref{nls1}) via the lens transform, so results about (\ref{nls2}) may shed some light on the study of (\ref{nls1}), where probabilistic methods have not yet entered. Finally, (\ref{nls2}) also arise naturally from the theory of Bose-Einstein condensates, as noted in \cite{BTT10}.

The major difficulty in the study of (\ref{nls2}) is that the support of the Gaussian part of the Gibbs measure contains functions with very low regularity. With radial assumption the typical element in the support of the Gibbs measure belongs to $\cap_{\delta>0}\mathcal{H}^{-\delta}$ but not $L^{2}$; without radial assumption the typical element does not even belong to $\mathcal{H}^{1-d}$ (the spaces $\mathcal{H}^{\sigma}$, as defined in Section \ref{bour}, are Sobolev spaces associated to $\mathbf{H}$; see Section \ref{gib} for more details). A consequence of this is that we cannot expect even local well-posedness in the deterministic sense for such low-regularity initial data. In fact in \cite{Tho08} local ill-posedness for $\mathcal{H}^{\sigma}$ initial data was shown\footnote[1]{The counterexample constructed in \cite{Tho08} was for (\ref{nls1}), but it could be easily adapted to (\ref{nls2}) as noted in \cite{Tho09}; also one can check the proof there that the initial data could be made radial.}, provided $\sigma<\sigma_{c}:=\frac{d}{2}-\frac{2}{p-1}$. In particular, we have $\sigma_{c}\to 1$ as $p\to\infty$ for the two-dimensional defocusing equation, thus deterministic local well-posedness fails completely for regularity below $L^{2}$.

In \cite{BTT10}, the problem was resolved by a probabilistic improvement of (weighted) Strichartz estimate, and it was shown that $\mathbf{H}^{\delta}e^{-\mathrm{i}t\mathbf{H}}f(\omega)$ almost surely belongs to some weighted Lebesgue space for $\delta<\frac{1}{2}$. Since $\sigma_{c}<\frac{1}{2}$ in one dimension, local well-posedness in this space could be proved. In two dimensions, however, it will be shown in Appendix \ref{count} that the distribution $\mathbf{H}^{\frac{\sigma}{2}}f(\omega)$ is almost surely not a locally integrable function (thus cannot belong to any weighted space) when $\sigma\geq\frac{1}{2}$. Since $\frac{1}{2}$ fails to reach the $\sigma_{c}$ threshold when $p$ is large, we have to use different tools to get local well-posedness. Fortunately, the nonlinear smoothing effect of the NLS equation provides such a tool. To fully exploit this effect, we will work in $\mathcal{X}^{\sigma,b}$ spaces (see Section \ref{bour} for definition) and use multilinear eigenfunction estimates. This requires $p$ to be an odd integer; but we believe that by more delicate discussions we can remove this restriction and allow for all $1<p<\infty$.

When there is no radial assumption, the support of the  Gaussian will have so low regularity that we can not even define the Gibbs measure. It would be possible to use alternative Gaussians to get local results, but then we do not have an invariant measure, so global results still seem out of reach. One possible way is to combine the probabilistic local result with the high-low analysis of Bourgain or the $I$-method of Colliander-Keel-Staffilani-Takaoka-Tao. For a progress in this direction, see Colliander-Oh \cite{CO10}.

Finally, as we mentioned above, the study of (\ref{nls2}) is closely related to the study of (\ref{nls1}). The result we obtain for (\ref{nls1}) (see Theorem \ref{main2} below) is an almost sure global well-posedness and scattering result with supercritical initial data (the critical index of (\ref{nls1}) is $\frac{d}{2}-\frac{2}{p-1}\to 1$ as $p\to\infty$ in two dimensions, while the initial data is below $L^{2}$), but due to the use of the lens transform, our result is unsatisfactory in the sense that (i) the space in which uniqueness holds is rather complex, and (ii) the Gaussian measure in Theorem \ref{main2} does not arise naturally from (\ref{nls1}), and we do not know how to construct the Gibbs measure of (\ref{nls1}). This may be an interesting problem for further study.

\subsection{Notations and priliminaries}\label{bour} From now on we assume the spacial dimension $d=2$, and all the functions we consider are radial. Define the Hermite operator $\mathbf{H}=-\Delta+|x|^{2}$. It has a complete series of real $L_{\mathrm{rad}}^{2}$ eigenfunctions\begin{equation}\label{eigen}
e_{k}(x)=\frac{1}{\sqrt{\pi}}\mathcal{L}_{k}^{0}(|x|^{2}),\,\,\,\,\,\,k\geq 0
\end{equation} with eigenvalue $4k+2$. Here $\mathcal{L}_{k}^{0}$ are Laguerre functions\[\mathcal{L}_{k}^{0}(z)=\frac{e^{\frac{z}{2}}}{k!}\frac{\mathrm{d}^{k}}{\mathrm{d}z^{k}}(z^{k}e^{-z}).\] Concerning these functions we have the basic pointwise estimates\begin{equation}\label{ctl}
\mathcal{L}_{k}^{\alpha}(z)\leq\left\{
\begin{array}{llll}
C & 0\leq z\leq\frac{1}{\nu}\\
C(z\nu)^{-\frac{1}{4}} & \frac{1}{\nu}\leq z\leq \frac{\nu}{2}\\
C\nu^{-\frac{1}{4}}(\nu^{\frac{1}{3}}+|\nu-z|)^{-\frac{1}{4}} & \frac{\nu}{2}\leq z\leq \frac{3\nu}{2}\\
Ce^{-cz} & z\geq\frac{3\nu}{2}
\end{array}
\right.\end{equation} where $\nu=4k+2$, $C$ and $c$ (possibly with subscripts) are positive constants varying from line to line, and will be used in this manner throughout this paper. For an introduction to Laguerre functions, see Szeg\"{o} \cite{Sz75} or Thangavelu \cite{Th93}, Chapter 1. The proof of (\ref{ctl}) is also contained in \cite{Er60} or \cite{AW65}.

For $\sigma\in\mathbb{R}$, $1\leq p\leq \infty$, we define the Sobolev spaces associated to $\mathbf{H}$:\begin{equation}\mathcal{W}_{\mathrm{rad}}^{\sigma,p}=\big\{u\in\mathcal{S}_{\mathrm{rad}}':\|u\|_{\mathcal{W}^{\sigma,p}}=\big\|\mathbf{H}^{\frac{\sigma}{2}}u\big\|_{L^{p}}<\infty\big\}.\end{equation} We also write $\mathcal{W}_{\mathrm{rad}}^{\sigma,2}=\mathcal{H}_{\mathrm{rad}}^{\sigma}$.

We also define a class of spacetime Hilbert spaces associated to $\mathbf{H}$, as\begin{equation}\label{bgn}
\mathcal{X}_{\mathrm{rad}}^{\sigma,b}=\big\{u\in\mathcal{S}_{\mathrm{rad}}'(\mathbb{R}\times\mathbb{R}^{2}):\|u\|_{\mathcal{X}^{\sigma,b}}=\big\|\mathbf{H}^{\frac{\sigma}{2}}\langle\mathrm{i}\partial_{t}-\mathbf{H}\rangle^{b}u\big\|_{L_{t,x}^{2}}<\infty\big\},\end{equation} or use radial Hermite expansion and Fourier transform to write\begin{equation}\nonumber\|u\|_{\mathcal{X}^{\sigma,b}}^{2}=\bigg(\sum_{k=0}^{\infty}(4k+2)^{\sigma}\int_{\mathbb{R}}\big(1+(\tau+4k+2)^{2}\big)^{b}\big|\mathcal{F}_{t}\langle u,e_{k}\rangle(\tau)\big|^{2}\,\mathrm{d}\tau\bigg)^{\frac{1}{2}},\end{equation} where as usual $\langle t\rangle=(|t|^{2}+1)^{\frac{1}{2}}$, $\mathcal{F}_{t}$ denote the Fourier transform $(2\pi)^{-\frac{1}{2}}\int_{\mathbb{R}}e^{-\mathrm{i}\tau t}f(t)\,\mathrm{d}t$ in $t$, and $\langle f,g\rangle$ denotes the $L^{2}(\mathbb{R}^{n})$ inner product of $f$ and $g$. For an interval $I$ we define a localized version of this space by \begin{equation}\label{localbgn}\|u\|_{\mathcal{X}^{\sigma,b,I}}=\inf\big\{\|v\|_{\mathcal{X}^{\sigma,b}}:v(t)=u(t),\,\,t\in I\big\},\end{equation} and denote it by $\mathcal{X}_{\mathrm{rad}}^{\sigma,b,I}$. When $I=[-T,T]$, we simply write $\mathcal{X}_{\mathrm{rad}}^{\sigma,b,T}$. Since all the functions will be radial, the ``rad'' subscript will be dropped from now on. Trivially $\mathcal{X}^{\sigma,b,I}$ is a separable Banach space (simply restrict a countable dense subset of $\mathcal{X}^{\sigma,b}$ to $I$).

We fix a smooth, non-increasing function $\eta$ such that $1=\eta(1)\geq \eta(x)\geq \eta(2)=0$ for all $x$. Using this cutoff, we define Littlewood-Paley projections\begin{equation}\label{litp}\Delta_{N}=\eta\big(\frac{2\mathbf{H}}{N^{2}}\big)-\eta\big(\frac{4\mathbf{H}}{N^{2}}\big)\end{equation} for dyadic $N$. Then $\Delta_{N}=0$ for $N\leq1$, since the first eigenvalue of $\mathbf{H}$ is $2$. Thus whenever we talk about $\Delta_{N}$, we always assume $N\geq 2$.

We shall use $\#M$ to denote the cardinal of a finite set $M$, $|E|$ to denote the Lebesgue measure of a subset set $E$ of a Euclidean space, $A \lesssim B$ to denote $A \leq CB$, and define $\gtrsim$ and $\sim$ similarly. The constants $C_{j}$ and $c_{j}$ will also be used freely, as indicated above. All these constants will ultimately depend on the only parameter $p$ in (\ref{nls2}) and (\ref{nls1}). Finally, we define the finite dimensional subspace $V_{k}$ to be the span of $\{e_{j}\}_{0\leq j\leq k}$. For a function $g$ on $\mathbb{R}^{2}$ or $I\times\mathbb{R}^{2}$, where $I$ is an interval, we define $g_{k}^{\circ}$ and $g_{k}^{\perp}$ to be the projection of $g$ on $V_{k}$ and $V_{k}^{\perp}$.

\subsection{Statement of main results, and plan for this paper} Fix a probability space $(\Omega,\Sigma,\mathbb{P})$ with a sequence of independent normalized complex Gaussians $\{g_{k}\}$ on $\Omega$ (which has density $\frac{1}{\pi}e^{-|z|^{2}}\mathrm{d}x\mathrm{d}y$, thus $g_{k}$ has mean $0$ and variance $1$), so that $\omega\mapsto(g_{k}(\omega))_{k\geq 0}$ is injective, and the series\begin{equation}\label{random}
f(\omega)=\sum_{k=0}^{\infty}\frac{1}{\sqrt{4k+2}}g_{k}(\omega)e_{k}\end{equation} converge\footnote[2]{For example, we may take the usual product space $\mathbb{C}^{\infty}$ equipped with the product of complex Gaussian measures, and coordinate functions $g_{j}$, and choose the (full-measure) subset where $|g_{k}(\omega)|=O(\langle k\rangle ^{10})$ as $\Omega$, this can easily guarantee the convergence of (\ref{random}).} in $\mathcal{S}'(\mathbb{R}^{2})$, for all $\omega\in\Omega$. Then $f=f(\omega)$ is an $\mathcal{S}'(\mathbb{R}^{2})$-valued random variable, and is a bijection between $\Omega$  and its range. Our main results can then be stated as follows.
\begin{theorem}\label{main}Consider the Cauchy problem \begin{equation}\label{nls22}
\left\{
\begin{array}{ll}
\mathrm{i}\partial_{t}u+(\Delta-|x|^{2})u=\pm |u|^{p-1}u\\
u(0)=f(\omega)
\end{array}
\right.
\end{equation} and separate two cases: the sign is $-$ and $1<p<3$, or the sign is $+$ and $p\geq 3$ is an odd integer. In the former let $\sigma=0$, in the latter let $0<\sigma<1$ be sufficiently close to $1$, depending on $p$. In both cases let $1>b>\frac{1}{2}$ be sufficiently close to $\frac{1}{2}$, depending on $\sigma$ and $p$.

Then a.s. in $\mathbb{P}$, we have a unique global (strong) solution $u$ in the affine space\begin{equation}\mathcal{Y}=e^{-\mathrm{i}t\mathbf{H}}f(\omega)+\bigcap_{T>0}\mathcal{X}^{\sigma,b,T},\end{equation} and we have continuous embeddings \[\mathcal{Y}\subset e^{-\mathrm{i}t\mathbf{H}}f(\omega)+\mathcal{C}\big(\mathbb{R},\mathcal{H}^{\sigma}(\mathbb{R}^{2})\big)\subset\mathcal{C}\big(\mathbb{R},\cap_{\delta>0}\mathcal{H}^{-\delta}(\mathbb{R}^{2})\big).\] We also have a Gibbs measure on $\mathcal{S}'(\mathbb{R}^{2})$, which is absolutely continuous with respect to the push forward of $\mathbb{P}$ under $f$, and is invariant under the flow defined by (\ref{nls22}).
\end{theorem}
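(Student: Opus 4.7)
The plan is to combine a random-data local well-posedness theory in the Bourgain $\mathcal{X}^{\sigma,b,T}$ spaces with Bourgain's invariant-measure globalization scheme. First I would write $u = F(t) + v(t)$, where $F(t) := e^{-\mathrm{i}t\mathbf{H}}f(\omega)$ is the free evolution of the random data, so that $v$ solves
\[
v(t) = \mp\mathrm{i}\int_0^t e^{-\mathrm{i}(t-s)\mathbf{H}}\,|F+v|^{p-1}(F+v)(s)\,\mathrm{d}s,\qquad v(0)=0,
\]
and run a contraction for $v$ in a small ball of $\mathcal{X}^{\sigma,b,T}$ on a random interval $[-T(\omega),T(\omega)]$. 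The standard embedding $\mathcal{X}^{\sigma,b,T}\hookrightarrow C([-T,T];\mathcal{H}^{\sigma})$, valid for $b>\tfrac{1}{2}$, then supplies the claimed continuous inclusions for $\mathcal{Y}$, while $f(\omega)\in\cap_{\delta>0}\mathcal{H}^{-\delta}$ a.s.\ takes care of the outermost embedding.

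Two inputs drive the fixed point. First, a probabilistic Strichartz-type improvement: Gaussian hypercontractivity applied to (\ref{random}) reduces mixed-norm $L^{q}_{t}\mathcal{W}^{\alpha,r}_{x}$ bounds for $F$ to weighted $\ell^{2}$ sums of $L^{r}$ norms of the eigenfunctions $e_{k}$, which are controlled by the pointwise estimates (\ref{ctl}). Second, deterministic multilinear estimates: expanding $|F+v|^{p-1}(F+v)$ into $O(1)$ monomials $\prod_{j=1}^{p} w_{j}$ with $w_{j}\in\{F,\bar F,v,\bar v\}$ (an honest polynomial in the defocusing odd-integer case, a Taylor-type linearization in the subcubic focusing case), one needs
\[
\Big\|\prod_{j=1}^{p}w_{j}\Big\|_{\mathcal{X}^{\sigma,b-1,T}} \lesssim T^{\theta}\prod_{j=1}^{p}\|w_{j}\|_{\mathcal{X}^{\sigma,b,T}},
\]
together with mixed bounds where some $w_{j}$'s are replaced by $F$ and estimated probabilistically. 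This $\mathcal{X}^{\sigma,b,T}$ estimate is the heart of the local theory: it encodes the nonlinear smoothing of the Hermite--Schr\"odinger flow, and it reduces by duality and the Hermite expansion to $p$-linear eigenfunction estimates that can be controlled by $L^{r}$ products of Laguerre functions via (\ref{ctl}). The restriction to odd integer $p$ enters precisely here, to keep the nonlinearity polynomial in $(u,\bar u)$ and in the Hermite frequencies.

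For the Gibbs measure, let $\mu$ denote the push-forward of $\mathbb{P}$ under $f$, a Gaussian on $\mathcal{S}'(\mathbb{R}^{2})$ supported on $\cap_{\delta>0}\mathcal{H}^{-\delta}\setminus L^{2}$. I would define the Gibbs measure $\nu$ by the density
\[
\frac{d\nu}{d\mu}(u) = Z^{-1}\exp\!\Big(\mp\tfrac{1}{p+1}\int_{\mathbb{R}^{2}}|u|^{p+1}\Big),
\]
multiplied by an $L^{2}$ cutoff $\chi(\|u\|_{L^{2}}\leq M)$ of Lebowitz--Rose--Speer type in the focusing case $1<p<3$. Showing that this density lies in $L^{1}(\mu)$ reduces to Wiener-chaos estimates combined with (\ref{ctl}); in the focusing case the $L^{2}$ cutoff together with the constraint $p<3$ makes the exponential integrable. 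Next I would truncate (\ref{nls22}) by projecting onto $V_{N}$: the resulting ODE is Hamiltonian, conserves both $\|u\|_{L^{2}}^{2}$ and the truncated energy, and its flow $\Phi_{N}^{t}$ preserves Lebesgue measure on $V_{N}$ by Liouville. Hence the finite-dimensional analogue $\nu_{N}$ of $\nu$ is exactly invariant under $\Phi_{N}^{t}$.

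The final step is the Bourgain globalization argument. Invariance of $\nu_{N}$ yields, for every $\varepsilon,T>0$, a set $\Omega_{N,T,\varepsilon}\subset\Omega$ with $\mathbb{P}(\Omega_{N,T,\varepsilon}^{c})<\varepsilon$ on which $\|\Phi_{N}^{t}f(\omega)\|_{\mathcal{X}^{\sigma,b,[t,t+1]}}\lesssim(\log(T/\varepsilon))^{1/2}$ for all $|t|\leq T$. Combined with a stability estimate for $\Phi^{t}-\Phi_{N}^{t}$ on unit time intervals (proved by the same multilinear estimates) and the local theory, this iteratively extends the solution to all of $\mathbb{R}$ on the full-measure set $\cap_{T,\varepsilon}\Omega_{T,\varepsilon}$, while passing $N\to\infty$ in $\int G\circ\Phi_{N}^{t}\,d\nu_{N}=\int G\,d\nu_{N}$ yields invariance of $\nu$ under the full flow. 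The chief obstacle I anticipate is the multilinear $\mathcal{X}^{\sigma,b}$ estimate: one must extract enough nonlinear smoothing from the Schr\"odinger evolution to close the fixed point at a regularity $\sigma<1$ that nevertheless exceeds the critical index $\sigma_{c}=1-\tfrac{2}{p-1}$, while keeping $b-\tfrac12>0$ small enough that both the probabilistic improvement and the $T^{\theta}$ time localization remain useful. The delicate bookkeeping between $(\sigma,b)$, the Laguerre $L^{p}$ bounds (\ref{ctl}), and the combinatorics of $p$-linear Hermite products is what makes this step nontrivial.
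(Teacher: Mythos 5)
Your overall architecture (free evolution plus smoother remainder, contraction in $\mathcal{X}^{\sigma,b,T}$, truncated Hamiltonian ODEs, Bourgain's invariant-measure globalization) matches the paper, but the step you yourself identify as the heart of the matter is set up in a way that cannot close. You propose a \emph{deterministic} $p$-linear estimate $\|\prod_j w_j\|_{\mathcal{X}^{\sigma,b-1,T}}\lesssim T^{\theta}\prod_j\|w_j\|_{\mathcal{X}^{\sigma,b,T}}$, with the occurrences of $F=e^{-\mathrm{i}t\mathbf{H}}f(\omega)$ handled by \emph{linear} probabilistic Strichartz bounds (Khintchine/hypercontractivity applied to (\ref{random}) plus the Laguerre bounds (\ref{ctl})). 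Those linear randomized bounds only give $F$ in $L_t^{r}\mathcal{W}_x^{\alpha,q}$ for $\alpha<\min(\tfrac2q,1-\tfrac2q)\le\tfrac12$; in fact Appendix \ref{count} shows $\mathbf{H}^{\sigma/2}f(\omega)$ is a.s.\ not even a locally integrable function once $\sigma\ge\tfrac12$, so no estimate placing $\sigma$ derivatives (with $\sigma$ close to $1$) on a random factor can hold. Since in the defocusing case one must take $\sigma$ close to $1$ to beat $\sigma_c=1-\tfrac{2}{p-1}$ for large odd $p$, the scheme ``deterministic multilinear estimate $+$ linear probabilistic improvement on $F$'' fails exactly in the regime the theorem addresses. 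The paper's Proposition \ref{longest} never puts $\sigma$ derivatives on the random factors: after a Littlewood--Paley decomposition, in the dangerous case where the highest frequency $N_1$ sits on a random factor it dualizes, expands the deterministic factors by the representation lemma, and estimates the resulting sum of products $g_{n_1}^{-}\cdots g_{n_l}^{-}$ over the resonant set as a genuinely \emph{multilinear} Gaussian chaos, via hypercontractivity for products of Gaussians (Proposition \ref{gaussianest}) together with a combinatorial count of the resonance set and the multilinear Laguerre estimates of Lemma \ref{multil}; the gain $N_1^{-\theta}$ comes from this interplay, not from any Sobolev regularity of $F$. This multilinear-chaos mechanism is absent from your outline, and without it the fixed point does not close.

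A second, smaller but genuine, error is the focusing Gibbs measure: you propose a Lebowitz--Rose--Speer cutoff $\chi(\|u\|_{L^2}\le M)$, but on the support of $\mu$ one has $\|u\|_{L^2}=\infty$ almost surely (the data is in $\cap_{\delta>0}\mathcal{H}^{-\delta}\setminus L^2$), so this density vanishes $\mu$-a.e.\ and defines the zero measure. One must use the renormalized cutoff $\chi\big(\|u_{2^k}^{\circ}\|_{L^2}^{2}-\alpha_{2^k}\big)$ with the divergent means $\alpha_{2^k}$ subtracted, as in (\ref{cutoff}), and then prove $L^r(\mu)$ convergence of the truncated densities (Proposition \ref{intg}), which requires its own large-deviation argument. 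With these two repairs --- the multilinear Gaussian-chaos estimate in place of a deterministic multilinear bound, and the Wick-renormalized $L^2$ cutoff --- your globalization and invariance outline would align with the paper's proof.
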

\begin{theorem}\label{main2}Let $\sigma$ and $b$ be as in Theorem \ref{main}. Consider the (defocusing) Cauchy problem \begin{equation}\label{main000}
\left\{
\begin{array}{ll}
\mathrm{i}\partial_{t}u+\Delta u=|u|^{p-1}u\\
u(0)=f(\omega)
\end{array}
\right.
\end{equation}with $p\geq 3$ an odd integer. Then a.s. in $\mathbb{P}$, we have a global (strong) solution $u$ in the affine space\begin{equation}\label{moresmooth}\mathcal{Z}=e^{\mathrm{i}t\Delta}f(\omega)+\bigcap_{T>0}X^{\sigma,b,T},\end{equation} and we have a continuous embedding \[\mathcal{Z}\subset e^{\mathrm{i}t\Delta}f(\omega)+\mathcal{C}\big(\mathbb{R},H^{\sigma}(\mathbb{R}^{2})\big).\]Here $X^{\sigma,b,T}$ is defined in the same way as in (\ref{bgn}) and (\ref{localbgn}), but with $\mathbf{H}$ replaced by $-\Delta$. We also have an appropriate affine subspace $\mathcal{Z}'$ of $\mathcal{Z}$ containing the solution $u$, in which uniqueness holds. Finally we have a scattering result: there exist functions $g_{\pm}\in H^{\sigma}$ so that\begin{equation}\label{scatter}\lim_{t\to\pm\infty}\|u-e^{\mathrm{i}t\Delta}(f(\omega)+g_{\pm})\|_{H^{\sigma}}=0.\end{equation}
\end{theorem}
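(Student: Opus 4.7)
The plan is to deduce Theorem \ref{main2} from Theorem \ref{main} via the (radial) two-dimensional lens transform
\[u(s,x) \;=\; \frac{1}{\cos(2t)}\, e^{-\mathrm{i}|x|^{2}\tan(2t)/4}\, v\!\left(t, \frac{x}{\cos(2t)}\right), \qquad s \;=\; \frac{\tan(2t)}{2},\]
which sets up a bijection between solutions $v$ of the modified harmonic-oscillator equation
\[\mathrm{i}\partial_{t}v + (\Delta - |y|^{2})v \;=\; (\cos 2t)^{p-3}\,|v|^{p-1}v\]
on $(-\pi/4,\pi/4)\times\mathbb{R}^{2}$ and solutions $u$ of \eqref{main000} on $\mathbb{R}\times\mathbb{R}^{2}$, with matching initial data at $t = s = 0$. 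A crucial feature (verifiable from Mehler's formula) is that the linear Hermite flow $v(t) = e^{-\mathrm{i}t\mathbf{H}}h$ is mapped to the linear Schr\"odinger flow $u(s) = e^{\mathrm{i}s\Delta}h$.

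First I would run the proof of Theorem \ref{main} (in the $+$, $p\geq 3$ odd case) for the modified equation above. The inserted factor $(\cos 2t)^{p-3}$ is smooth and bounded by $1$ on $(-\pi/4,\pi/4)$, vanishing at the endpoints when $p>3$ (which only helps), and it appears as a harmless time weight in the multilinear $\mathcal{X}^{\sigma,b}$ estimates; the almost sure global well-posedness argument of Theorem \ref{main} goes through essentially verbatim, producing a.s.\ a global solution $v\in\mathcal{Y}$ on $(-\pi/4,\pi/4)$.

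Next I would transport $v$ to $u$ via the lens transform. With $w := v - e^{-\mathrm{i}t\mathbf{H}}f(\omega)\in\mathcal{X}^{\sigma,b,T}\cap C(\mathbb{R},\mathcal{H}^{\sigma})$ for every $T<\pi/4$, the task is to show that $\tilde{w} := u - e^{\mathrm{i}s\Delta}f(\omega)$ lies in $X^{\sigma,b,T'}$ for every $T'>0$ and in $C(\mathbb{R},H^{\sigma})$. For the scattering conclusion \eqref{scatter}, the continuous embedding in Theorem \ref{main} supplies limits $w_{\pm}:=\lim_{t\to\pm\pi/4}w(t)\in\mathcal{H}^{\sigma}$; one then sets $g_{\pm}:=e^{\mathrm{i}(\pm\pi/4)\mathbf{H}}w_{\pm}\in\mathcal{H}^{\sigma}\subset H^{\sigma}$, so that the linear Hermite flow $\tilde v(t) := e^{-\mathrm{i}(t\mp\pi/4)\mathbf{H}}w_{\pm}$ corresponds under the lens transform to $e^{\mathrm{i}s\Delta}g_\pm$. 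The convergence $w(t)-\tilde v(t)\to 0$ in $\mathcal{H}^\sigma$ as $t\to\pm\pi/4$ then translates, via the transform, to $\tilde w(s) - e^{\mathrm{i}s\Delta}g_\pm \to 0$ in $H^\sigma$ as $s\to\pm\infty$. The uniqueness subspace $\mathcal{Z}'$ is defined as the lens-transform image of $\mathcal{Y}$ restricted to $(-\pi/4,\pi/4)$, and uniqueness there is inherited from Theorem \ref{main}.

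The main obstacle is the bookkeeping in the transport step: the lens transform is a spatial dilation by $\cos(2t)$ composed with multiplication by the Gaussian phase $e^{-\mathrm{i}|x|^{2}\tan(2t)/4}$, and these mix spatial frequency with modulation in an $s$-dependent way. Showing that the resulting map is a bounded bijection between the local Bourgain-type spaces $\mathcal{X}^{\sigma,b,T}$ and $X^{\sigma,b,T'}$, and that it converts $C(\mathbb{R},\mathcal{H}^\sigma)$ control on the nonlinear part into $C(\mathbb{R},H^\sigma)$ control away from the singular endpoints $t=\pm\pi/4$, requires a careful harmonic-oscillator calculation using Mehler's kernel and the radial assumption. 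This indirectness is also the reason why the uniqueness class $\mathcal{Z}'$ can only be described implicitly, as the authors acknowledge in the introduction.
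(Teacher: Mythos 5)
Your overall route (lens transform reducing \eqref{main000} to the modified harmonic-oscillator equation $\mathrm{i}\partial_{t}v+(\Delta-|x|^{2})v=(\cos 2t)^{p-3}|v|^{p-1}v$ on $|t|<\tfrac{\pi}{4}$, plus mapping properties of $\mathcal{L}^{-1}$ between $\mathcal{X}^{\sigma,b,\delta}$ and $X^{\sigma,b,T}$) is exactly the paper's, and you are right that the factor $(\cos 2t)^{p-3}$ is harmless for the \emph{local} multilinear estimates. But there is a genuine gap at the step where you assert that "the almost sure global well-posedness argument of Theorem \ref{main} goes through essentially verbatim." The globalization in Theorem \ref{main} is not a pure PDE argument: it rests on the invariance of the truncated Gibbs measure $\nu_{2^{k}}^{\circ}$ under the truncated Hamiltonian flows $\Phi_{2^{k},t}$, which is what produces the recurrence-type bound \eqref{recur0} and hence the uniform-in-$k$ control \eqref{partialsum} off a set of small $\nu_{2^{k}}$-measure. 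The modified equation is \emph{non-autonomous}; its truncated flow does not preserve the Gibbs measure, so this chain breaks, and the time factor does not "only help" in any automatic way. The paper compensates with a specific substitute: the monotonicity in $|t|$ of $\mathcal{E}(t,v(t))=\langle\mathbf{H}v,v\rangle+\frac{2(\cos 2t)^{p-3}}{p+1}\|v\|_{L^{p+1}}^{p+1}$ (Lemma \ref{mono}, valid precisely because $p\geq 3$, and only in the defocusing case), which together with preservation of Lebesgue measure yields the one-sided inequality \eqref{recur1}, $\nu_{2^{k}}\big(\{g:\Phi_{2^{k},t}(g_{2^{k}}^{\circ})\in J\}\big)\leq\mu\big(\{g:g_{2^{k}}^{\circ}\in J\}\big)$, and this one-sided bound is enough to rerun the Section \ref{gwp} argument. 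Without this (or some other a priori control), your global solution $v$ on $(-\pi/4,\pi/4)$ is unproved.

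A related, smaller gap: for scattering you need the limits $\lim_{t\to\pm\pi/4}\big(v(t)-e^{-\mathrm{i}t\mathbf{H}}f(\omega)\big)$ in $\mathcal{H}^{\sigma}$, and you derive them from "the continuous embedding" after obtaining $v$ on the open interval. Control on every compact subinterval $[-T,T]$, $T<\pi/4$, does not by itself give a limit at the endpoint; the constants could blow up as $T\uparrow\pi/4$. The paper avoids this by running the measure-theoretic global argument with the \emph{fixed} time $T=\pi/4$ (subdivided into $M$ pieces), so that $v-e^{-\mathrm{i}t\mathbf{H}}f(\omega)\in\mathcal{X}^{\sigma,b,\pi/4}$ on the closed interval, and then \eqref{str3} gives continuity up to $\pm\pi/4$ and hence the limits defining $g_{\pm}$. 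Incidentally, the boundedness of $\mathcal{L}^{-1}$ on Bourgain spaces, which you flag as the main obstacle needing Mehler-kernel computations, is handled more softly in the paper: interpolation in $b\in\{0,1\}$ via the conjugation identity \eqref{conjugate}, and in $\sigma\in\{0,1\}$, reducing everything to the uniform $\mathcal{H}^{\sigma}\to H^{\sigma}$ boundedness of multiplication by $e^{\mathrm{i}\lambda|x|^{2}}$, $|\lambda|\leq 1$.
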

The rest of this paper is devoted to the proof of Theorems \ref{main} and \ref{main2}. In Section \ref{fct} we recall the linear Strichartz and $L^{2}$-based estimates with respect to the propagator $e^{-\mathrm{i}t\mathbf{H}}$. We will rely on the functional calculus of $\mathbf{H}$ (thus the results hold for more general Schr\"{o}dinger operators, though we do not discuss this here). Some results in this section are standard and can be found in, say, \cite{CO10} or \cite{Ta05}. In Section \ref{gib}, we prove some large deviation bounds for Gaussian random variables, and use these to construct the Gibbs measure of (\ref{nls2}). In Section \ref{nonlinear}, which is the core of this paper, we use a Littlewood-Paley decomposition and hypercontractivity of Gaussians to prove a multilinear estimate in $\mathcal{X}^{\sigma,b}$ spaces, which shows the nonlinear smoothing effect. In Section \ref{lwp}, we put these estimates together to develop a local Cauchy theory. Then in Section \ref{gwp} we extend this to a global well-posedness result by exploiting the invariance of truncated Gibbs measure under the flow of approximating ODEs. In Section \ref{lens0} we introduce the lens transform and convert the result on (\ref{nls2}) to one on (\ref{nls1}), proving Theorem \ref{main2}. In Section \ref{inv}, we show the invariance of the Gibbs measure, completing the proof of Theorem \ref{main}. Finally in Appendix \ref{count}, we discuss the typical regularity (in terms of $\mathbf{H}$) on the support of the Gibbs measure.
\subsection{Acknowledgements}
I would like to thank Gigliola Staffilani, who introduced me to this problem, for her guidance and constant encouragements; I would also like to thank Nikolay Tzvetkov, for his helpful comments on the first draft of this paper.

\section{Functional calculus and Strichartz estimates}\label{fct}

We begin with the following kernel estimate about the harmonic oscillator $\mathbf{H}$.
\begin{proposition}\label{kernelest}
Let $\psi$ be a Schwarz function, then for $t>0$ the operator $\psi(t\mathbf{H})$ is an integral operator with kernel $K_{t}(x,y)$ where\begin{equation}\label{ineq}|K_{t}(x,y)|\lesssim t^{-1}(1+t^{-\frac{1}{2}}|x-y|)^{-N}.\end{equation} The implicit constants in $\lesssim$ depends only on $N$ and $\psi$. In particular, these operators $K_{t}$ are bounded uniformly in $t$ on $\mathcal{W}^{\sigma,p}$ for all $\sigma\in\mathbb{R}$ and $1\leq p\leq\infty$.
\end{proposition}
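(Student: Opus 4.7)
The plan is to first establish the estimate for the heat semigroup $e^{-t\mathbf{H}}$ via the explicit Mehler kernel, and then extend to general Schwartz $\psi$ via a Fourier representation combined with an oscillatory-integral argument.

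In dimension two, Mehler's formula writes $e^{-t\mathbf{H}}(x,y)$ as a Gaussian in $x,y$ times a prefactor of order $(\sinh 2t)^{-1}$. For $0<t\leq 1$ the quadratic form in the exponent is bounded above by $-c|x-y|^{2}/t$, so $|e^{-t\mathbf{H}}(x,y)|\lesssim t^{-1}\exp(-c|x-y|^{2}/t)$, strictly stronger than (\ref{ineq}); for $t\geq 1$ the prefactor already decays like $e^{-2t}$ and the bound follows trivially. For general Schwartz $\psi$ I would use the Fourier representation
\[
\psi(t\mathbf{H})=\int_{\mathbb{R}}\hat{\psi}(\tau)\,e^{-i\tau t\mathbf{H}}\,d\tau,
\]
together with the explicit formula for the Schr\"odinger kernel $e^{-is\mathbf{H}}$ obtained by analytic continuation of Mehler to imaginary time. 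In the regime where $|\tau t|$ is small, the Schr\"odinger kernel has phase $\Phi\approx|x-y|^{2}/(4\tau t)$ and prefactor of order $(\tau t)^{-1}$, so $|\partial_{\tau}\Phi|\sim|x-y|^{2}/(\tau^{2}t)$; integrating by parts in $\tau$ against $e^{i\Phi}$ yields a gain of order $\tau^{2}t/|x-y|^{2}$ per step. Combined with the Schwartz decay of $\hat\psi$, which effectively restricts $|\tau|\lesssim 1$, after $N$ integrations by parts one obtains the bound $t^{-1}(t/|x-y|^{2})^{N}=t^{-1}(t^{-1/2}|x-y|)^{-2N}$, giving (\ref{ineq}) for arbitrary exponents.

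The $\mathcal{W}^{\sigma,p}$ boundedness then follows by Schur's test: the change of variable $z=(y-x)/\sqrt{t}$ gives $\int|K_{t}(x,y)|\,dy\lesssim\int(1+|z|)^{-N}\,dz<\infty$ uniformly in $x$ and $t$, so $\psi(t\mathbf{H})$ is uniformly bounded on $L^{p}$ for every $1\leq p\leq\infty$; commutation with $\mathbf{H}^{\sigma/2}$ in the spectral calculus extends this to every $\mathcal{W}^{\sigma,p}$.

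The main technical obstacle is the sequence of singularities of the Schr\"odinger kernel at $\tau t\in\tfrac{\pi}{2}\mathbb{Z}\setminus\{0\}$. For $t$ small, the Schwartz decay of $\hat\psi$ keeps $\tau t$ in a neighbourhood of $0$ and we stay on the regular branch; but for $t\gtrsim 1$ one must work differently. A workable alternative is to combine the $t=O(1)$ kernel estimate from the first step with the fact that $\sup_{\lambda\geq 2}|\psi(t\lambda)|$ decays faster than any polynomial in $t$, via a spectral truncation separating a compactly supported piece of the multiplier from its rapidly decaying tail.
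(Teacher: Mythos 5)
Your plan is genuinely different from the paper's. The paper never goes through the Schr\"odinger group: it quotes Dziuba\'nski's kernel estimate (valid for Schr\"odinger operators with nonnegative polynomial potential) for multipliers in $\mathcal{S}_{0}^{m}([0,\infty))$, i.e.\ Schwartz functions vanishing to high order at the origin, computes the kernel of $e^{-\sigma t\mathbf{H}}$ explicitly from Mehler's formula, and writes a general Schwartz $\psi$ as $f_{0}+\sum_{j}c_{j}e^{-\sigma_{j}z}$ with $f_{0}\in\mathcal{S}_{0}^{m}$; only your final step (Schur's test plus commutation with $\mathbf{H}^{\sigma/2}$) coincides with the paper's.

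As written, though, your oscillatory-integral argument has a genuine gap. The identity $\psi(t\mathbf{H})=\int\hat{\psi}(\tau)e^{-\mathrm{i}\tau t\mathbf{H}}\,\mathrm{d}\tau$ is only conditionally convergent at the level of kernels: the prefactor of the Mehler--Schr\"odinger kernel is $|\sin(2\tau t)|^{-1}$, which is not locally integrable in $\tau$ either at $\tau=0$ or at the focal times $\tau t\in\frac{\pi}{2}\mathbb{Z}\setminus\{0\}$. Consequently (i) in the near-diagonal regime $|x-y|\lesssim t^{1/2}$, where your integration by parts gains nothing, taking absolute values diverges logarithmically at $\tau=0$, so even the on-diagonal bound $|K_{t}|\lesssim t^{-1}$ does not follow from the argument given; and (ii) at the focal times the rapid decay of $\hat{\psi}$ only yields a bound $O_{M}(t^{M})$ that is \emph{uniform in} $(x,y)$, whereas (\ref{ineq}) demands the factor $(1+t^{-1/2}|x-y|)^{-N}$. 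This matters: near odd multiples of $\pi/2$ the propagator concentrates on $y\approx-x$ (it is essentially a parity operator), the Mehler phase there is large in $|x+y|$ rather than $|x-y|$, and at points with $y\approx-x$ the quantity $|x-y|$ is huge, so the required decay must be extracted from a finer stationary-phase analysis of the remaining ($\cot$-type) part of the phase, which the proposal does not contain. Likewise, for $t\gtrsim1$ your ``rapidly decaying tail'' piece needs pointwise spectral-projection (Hermite-function) bounds with spatial decay, not just smallness of $\sup_{\lambda\geq2}|\psi(t\lambda)|$, since operator-norm smallness alone gives no kernel estimate. The scheme is repairable --- e.g.\ apply the Fourier representation to the complex-time semigroup $e^{-(1-\mathrm{i}\tau)t\mathbf{H}}$ acting on a compactly supported piece $\psi\chi$ (writing $\psi(z)\chi(z)e^{z}$ as a Fourier transform), whose Mehler kernel has Gaussian bounds uniformly controlled in $\tau$, and treat $\psi(1-\chi)$ by the decay of the multiplier together with eigenfunction estimates --- but in its present form the key estimate (\ref{ineq}) is not established.
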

\begin{proof} It was proved in \cite{Dz98}, Corollary 3.14 that, for any fixed $N$, the inequality (\ref{ineq}) holds, provided \begin{equation}\psi\in\mathcal{S}_{0}^{m}([0,+\infty))=\big\{\psi\in\mathcal{S}([0,+\infty)):\psi^{(k)}(0)=0,\,\,\,\,0\leq j\leq m\big\},\end{equation} where $m$ is large enough depending on $N$ (actually the same result was proved for any Schr\"{o}dinger operator with nonnegative polynomial potential). On the other hand, when $\psi(z)=e^{-\sigma z}$ with $\sigma>0$, we have from Mehler's formula that
\begin{equation}\label{meh}K_{t}(x,y)=\frac{e^{-2\sigma t}}{\pi(1-e^{-4\sigma t})}\exp\bigg(-\frac{1}{2}\frac{1+e^{-4\sigma t}}{1-e^{-4\sigma t}}(|x|^{2}+|y|^{2})+\frac{2e^{-2\sigma t}}{1-e^{-4\sigma t}}x\cdot y\bigg).\end{equation} Writing $2\sigma t=\delta$, we know\begin{equation}\nonumber-\frac{1}{2}\frac{1+e^{-2\delta}}{1-e^{-2\delta}}(|x|^{2}+|y|^{2})+\frac{2e^{-\delta}}{1-e^{-2\delta}}x\cdot y\leq -\frac{c}{\delta}|x-y|^{2},\end{equation} thus the kernel satisfies\begin{equation}0\leq K_{t}(x,y)\leq \frac{c_{1}}{\delta}e^{-\frac{c_{2}}{\delta}|x-y|^{2}}\lesssim t^{-1}(1+t^{-\frac{1}{2}}|x-y|)^{-N}\end{equation}
 for any $N$. Now for any fixed $m$, any function $f\in\mathcal{S}([0,+\infty))$ can be written as\begin{equation}f(z)=f_{0}(z)+\sum_{j=1}^{l}c_{j}e^{-\sigma_{j}z},\end{equation} where $f_{0}\in\mathcal{S}_{0}^{m}([0,+\infty))$ and $\sigma_{j}>0$. Combining the two results above, we have proved (\ref{ineq}). The uniform boundedness now follows from (\ref{ineq}), Schur's test, and commutativity of $\psi(t\mathbf{H})$ and $\mathbf{H}^{\frac{\sigma}{2}}$.
\end{proof}
\begin{remark}
The constants in Proposition \ref{kernelest} certainly depend on $\psi$ and the Lebesgue or Sobolev exponents, but this dependence can be safely ignored in that throughout this paper, we only use a finite number of fixed cutoff functions $\psi$, and a finite number of fixed exponents.
\end{remark}
\begin{corollary}\label{sobbb}
Suppose $1\leq p\leq\infty$, $\sigma_{1,2}\in\mathbb{R}$, $R>0$ and $g$ is a function.

(1) If $\sigma_{1}\geq\sigma_{2}$, and $\langle g,e_{k}\rangle\neq0$ only if $4k+2\gtrsim R^{2}$ (for example, when $g=\sum_{N>R}\Delta_{N}h$ for some $h$), then $\|g\|_{\mathcal{W}^{\sigma_{1},p}}\gtrsim R^{\sigma_{1}-\sigma_{2}}\|g\|_{\mathcal{W}^{\sigma_{2},p}}$.

(2) If $\sigma_{1}\leq\sigma_{2}$, and $\langle g,e_{k}\rangle\neq0$ only if $4k+2\lesssim R^{2}$ (for example, when $g=\sum_{N\leq R}\Delta_{N}h$ for some $h$), then $\|g\|_{\mathcal{W}^{\sigma_{1},p}}\gtrsim R^{\sigma_{1}-\sigma_{2}}\|g\|_{\mathcal{W}^{\sigma_{2},p}}$.

(3) If $\langle g,e_{k}\rangle\neq0$ only if $4k+2\sim R^{2}$ (for example, when $R=N$ is dyadic and $g=\Delta_{N}h$ for some $h$), then $\|g\|_{\mathcal{W}^{\sigma_{1},p}}\sim R^{\sigma_{1}-\sigma_{2}}\|g\|_{\mathcal{W}^{\sigma_{2},p}}$.

(4) All the operators $\sum_{N>R}\Delta_{N}$, $\sum_{N\leq R}\Delta_{N}$ and $\Delta_{N}$ are uniformly bounded from $\mathcal{W}^{\sigma_{1},p}$ to itself.
\end{corollary}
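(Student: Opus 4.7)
The approach is to reduce all four claims to the kernel estimate of Proposition \ref{kernelest} by constructing appropriate Schwartz multipliers at the spectral scale $R^{2}$. I would start with part (3), which is a Bernstein-type identity on a single dyadic annulus; parts (1) and (2) then follow by dyadic summation, and (4) is a direct application of the same kernel estimate. For (3), since $\langle g,e_{k}\rangle$ vanishes unless $4k+2\sim R^{2}$, fix $\chi\in C_{c}^{\infty}((0,\infty))$ equal to $1$ on the rescaled spectral range, so that $g=\chi(\mathbf{H}/R^{2})g$. Then
\[
\mathbf{H}^{\sigma_{1}/2}g = R^{\sigma_{1}-\sigma_{2}}\,\psi(\mathbf{H}/R^{2})\,\mathbf{H}^{\sigma_{2}/2}g,\qquad \psi(z):=z^{(\sigma_{1}-\sigma_{2})/2}\chi(z),
\]
with $\psi$ Schwartz because $\chi$ has compact support inside $(0,\infty)$. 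Proposition \ref{kernelest} applied at $t=1/R^{2}$ yields $\|\psi(\mathbf{H}/R^{2})\|_{L^{p}\to L^{p}}\lesssim 1$ uniformly in $R$, giving $\|g\|_{\mathcal{W}^{\sigma_{1},p}}\lesssim R^{\sigma_{1}-\sigma_{2}}\|g\|_{\mathcal{W}^{\sigma_{2},p}}$; swapping $\sigma_{1}\leftrightarrow\sigma_{2}$ yields the matching lower bound and hence the $\sim$ of (3).

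For parts (1) and (2), fix a smooth partition $1=\sum_{N}\phi_{N}(\mathbf{H})$ subordinate to the dyadic cover $\mathbf{H}\sim N^{2}$, with each $\phi_{N}$ Schwartz at scale $N^{2}$ and hence uniformly bounded on $\mathcal{W}^{\sigma,p}$ by Proposition \ref{kernelest}. Decomposing $g=\sum_{N}\phi_{N}(\mathbf{H})g$, the spectral hypothesis in (1) restricts the sum to $N\gtrsim R$; applying (3) to each piece (using that $\phi_{N}(\mathbf{H})$ commutes with $\mathbf{H}^{\sigma_{1}/2}$ and is uniformly bounded on $L^{p}$) gives
\[
\|g\|_{\mathcal{W}^{\sigma_{2},p}}\leq\sum_{N\gtrsim R}\|\phi_{N}(\mathbf{H})g\|_{\mathcal{W}^{\sigma_{2},p}}\lesssim\sum_{N\gtrsim R}N^{\sigma_{2}-\sigma_{1}}\|g\|_{\mathcal{W}^{\sigma_{1},p}}\lesssim R^{\sigma_{2}-\sigma_{1}}\|g\|_{\mathcal{W}^{\sigma_{1},p}},
\]
where the last step uses $\sigma_{2}\leq\sigma_{1}$ to sum the geometric series. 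Part (2) is the mirror argument with $N\lesssim R$ and $\sigma_{2}\geq\sigma_{1}$.

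Finally, for part (4), each $\Delta_{N}=\psi_{0}(\mathbf{H}/N^{2})$ with the fixed Schwartz function $\psi_{0}(z)=\eta(2z)-\eta(4z)$, so Proposition \ref{kernelest} at $t=1/N^{2}$ gives uniform boundedness of $\Delta_{N}$ on $\mathcal{W}^{\sigma_{1},p}$. The partial sums $\sum_{N\leq R}\Delta_{N}$ and $\sum_{N>R}\Delta_{N}$ can be reorganized, by grouping the dyadic scales into $O(1)$ telescoping families, as uniformly bounded Schwartz multipliers of $\mathbf{H}/R^{2}$, to which Proposition \ref{kernelest} again applies; alternatively one can subtract a low-pass cutoff $\eta(c\mathbf{H}/R^{2})$ from the identity. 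The main obstacle is precisely this reorganization: because each $\Delta_{N}$ compares scales $N$ and $N/\sqrt{2}$ rather than consecutive dyadic scales, the partial sums do not literally telescope in a single step, and a small amount of bookkeeping is required to exhibit them as single multipliers with uniform-in-$R$ Schwartz seminorms. All other steps reduce to Proposition \ref{kernelest}, Schur's test, and the commutativity of $\phi(\mathbf{H})$ with $\mathbf{H}^{\sigma/2}$.
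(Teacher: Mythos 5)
Your treatment of (1)--(3) is correct, but it runs in the opposite direction from the paper's. You prove (3) first, writing $g=\chi(\mathbf{H}/R^{2})g$ and absorbing $\mathbf{H}^{(\sigma_{1}-\sigma_{2})/2}$ into a single rescaled, compactly supported multiplier controlled by Proposition \ref{kernelest}; you then get (1) and (2) by summing (3) over a dyadic partition, using the strict inequality between $\sigma_{1}$ and $\sigma_{2}$ to sum a geometric series of operator norms (note that at $\sigma_{1}=\sigma_{2}$ your series diverges, so that trivial case should be stated separately). The paper instead proves (1) and (2) directly, by a single Schur-test estimate on the kernel of $\mathbf{H}^{-\sigma/2}R^{\sigma}\psi_{1}(R^{-2}\mathbf{H})=\sum_{k\geq0}2^{-k\sigma/2}\psi_{2}(2^{-k}R^{-2}\mathbf{H})$ (summing kernel bounds, with the factor $2^{-k\sigma/2}$ providing convergence), and then deduces (3) from (1)$+$(2). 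Both routes rest on Proposition \ref{kernelest} and both are valid for (1)--(3).

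Part (4) is where there is a genuine gap. Your bound for a single $\Delta_{N}=\psi_{0}(\mathbf{H}/N^{2})$ is fine and agrees with the paper, but the claim that the partial sums $\sum_{N\leq R}\Delta_{N}$ and $\sum_{N>R}\Delta_{N}$ "can be reorganized, by grouping the dyadic scales into $O(1)$ telescoping families" does not hold for the scale pattern you describe. Writing $\beta_{j}(\lambda)=\eta(\lambda/2^{j})$, the symbol of $\Delta_{2^{k}}$ under your reading of (\ref{litp}) is $\beta_{2k-1}-\beta_{2k-2}$, so the partial sum collects the consecutive differences $\beta_{j+1}-\beta_{j}$ only for even $j$; these are not contiguous in $j$, so no finite regrouping telescopes, and completing the telescope merely reproduces a sum of the same type. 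Nor can you bypass this by brute force: summing either the operator norms or the kernel bounds of Proposition \ref{kernelest} over the $\sim\log R$ scales loses exactly a factor $\log R$ (each bump at scale $2^{j}$ contributes $O(1)$ to the Schur integral), so uniform boundedness of the partial sums would then require a genuine spectral multiplier theorem for $\mathbf{H}$, which Proposition \ref{kernelest} alone does not give. The resolution, and what the paper's one-line proof uses, is that the Littlewood--Paley pieces are meant to be built so that the lower cutoff of each block coincides with the upper cutoff of the preceding one; then the sums telescope exactly, $\sum_{N\leq R}\Delta_{N}=\eta(t\mathbf{H})$ for some $t>0$ and $\sum_{N>R}\Delta_{N}=\mathrm{Id}-\eta(t\mathbf{H})$, and (4) follows at once from Proposition \ref{kernelest}. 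You should either adopt that telescoping form of the projections (your own partition $\sum_{N}\phi_{N}(\mathbf{H})=\mathrm{Id}$ in parts (1)--(2) is of exactly this kind) or supply the missing multiplier theorem; as written, the argument for the partial-sum operators is incomplete.
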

\begin{proof}
First (4) is obvious, since $\sum_{N<R}\Delta_{N}=\eta(t\mathbf{H})$ and $\Delta_{N}=\eta(t'\mathbf{H})-\eta(2t'\mathbf{H})$ for some $t,t'$, and $\sum_{N>R}\Delta_{N}=\mathrm{Id}-\sum_{N\leq R}\Delta_{N}$. Also it is clear that (1) and (2) implies (3). In proving these we may assume $\min\{\sigma_{1},\sigma_{2}\}=0$, since $\mathbf{H}^{\frac{\sigma}{2}}g$ satisfies the same properties as $g$.

To prove (1), choose a smooth cutoff $\psi_{1}$ that equals $1$ for $x\gtrsim 1$ and equals $0$ for very small $x$, then in (1) we have $g=\psi_{1}(R^{-2}\mathbf{H})g$. Therefore we need to prove that \begin{equation}\mathbf{H}^{-\frac{\sigma}{2}}R^{\sigma}\psi_{1}(R^{-2}\mathbf{H})=\sum_{k\geq 0}2^{-\frac{k\sigma}{2}}\psi_{2}(2^{-k}R^{-2}\mathbf{H})\end{equation} is uniformly bounded on $L^{p}$ for $\sigma>0$, where $\psi_{2}(x)=x^{-\frac{\sigma}{2}}(\psi_{1}(x)-\psi_{1}(2^{-1}x))$ is a fixed smooth compactly supported function. Using (\ref{ineq}), we can estimate the kernel $K(x,y)$ of $\mathbf{H}^{-\frac{\sigma}{2}}R^{\sigma}\psi_{1}(R^{-2}\mathbf{H})$ as\begin{equation}|K(x,y)|\lesssim \sum_{k\geq 0}2^{-\frac{k\sigma}{2}}2^{k}R^{2}\langle 2^{\frac{k}{2}}R|x-y|\rangle^{-N}= R^{2}\psi_{3}( R|x-y|),\end{equation} where\[\psi_{3}(x)=\sum_{k\geq 0}2^{(1-\frac{\sigma}{2})k}\langle 2^{\frac{k}{2}}x\rangle^{-N}\lesssim(1+ |x|^{\sigma-2})\langle x\rangle^{-N}.\] The last inequality is easily verified by considering $|x|\geq 1$ and $|x|<1$ separately. Therefore by Schur's test we have proved the uniform boundedness of the operator, thus proving (1). The proof of (2) is similar and is left as an exercise.
\end{proof}
To get Sobolev and product estimates, we next need a lemma.
\begin{lemma}\label{sepr}
For all $1<p<\infty$ and $\sigma>0$, we have\begin{equation}\|g\|_{\mathcal{W}^{\sigma,p}}\sim\|\langle\nabla\rangle^{\sigma} g\|_{L^{p}}+\|\langle x\rangle^{\sigma}g\|_{L^{p}}.\end{equation} In particular we have $\|g\|_{\mathcal{W}^{\sigma_{1},p}}\lesssim\|g\|_{\mathcal{W}^{\sigma_{2},p}}$ for $\sigma_{1}\leq\sigma_{2}$.
\end{lemma}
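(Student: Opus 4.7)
The plan is to establish both sides of the norm equivalence by reducing each to the $L^{p}$-boundedness of explicit composite operators, proving these at integer $\sigma$ and extending to all $\sigma>0$ by complex interpolation. For the direction $\|\langle\nabla\rangle^{\sigma}g\|_{L^{p}}+\|\langle x\rangle^{\sigma}g\|_{L^{p}}\lesssim\|g\|_{\mathcal{W}^{\sigma,p}}$, it suffices to bound the ``Hermite-Riesz'' operators $\langle\nabla\rangle^{\sigma}\mathbf{H}^{-\sigma/2}$ and $\langle x\rangle^{\sigma}\mathbf{H}^{-\sigma/2}$ on $L^{p}$. For the reverse direction $\|g\|_{\mathcal{W}^{\sigma,p}}\lesssim\|\langle\nabla\rangle^{\sigma}g\|_{L^{p}}+\|\langle x\rangle^{\sigma}g\|_{L^{p}}$, I would decompose $g=\chi g+(1-\chi)g$ with $\chi$ a smooth cutoff to $\{|x|\leq 1\}$, apply $L^{p}$-boundedness of $\mathbf{H}^{\sigma/2}\langle\nabla\rangle^{-\sigma}$ on the first piece and of $\mathbf{H}^{\sigma/2}\cdot M_{\langle x\rangle^{-\sigma}}$ on the second (where $\langle x\rangle^{-\sigma}$ is smooth and bounded on $\mathrm{supp}(1-\chi)$), and absorb the lower-order commutator $[\chi,\langle\nabla\rangle^{\sigma}]$ by induction on $\sigma$.

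At $\sigma=2$, expanding $\mathbf{H}=-\Delta+|x|^{2}$ gives $(-\Delta)\mathbf{H}^{-1}=I-|x|^{2}\mathbf{H}^{-1}$, so all four required boundedness properties reduce to the $L^{p}$-boundedness of $|x|^{2}\mathbf{H}^{-1}$ and $\mathbf{H}^{-1}$. I would write $\mathbf{H}^{-1}=\int_{0}^{\infty}e^{-t\mathbf{H}}\,\mathrm{d}t$ (convergent in operator norm since the spectrum of $\mathbf{H}$ is contained in $[2,\infty)$), insert the explicit Mehler kernel (\ref{meh}), and apply Schur's test; the Gaussian factor $e^{-c|x-y|^{2}/\delta}$ easily absorbs the $|x|^{2}$ weight after using $|x|^{2}\lesssim|y|^{2}+|x-y|^{2}$, uniformly in $x,y$ and in $t$ after integrating. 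Iteration then handles $\sigma=2m$.

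For general $\sigma>0$, I would apply Stein's complex interpolation to the analytic families $z\mapsto\mathbf{H}^{z/2}\langle\nabla\rangle^{-z}$ and $z\mapsto\langle\nabla\rangle^{z}\mathbf{H}^{-z/2}$ (and the two $\langle x\rangle$-analogues) on the strip $0\leq\mathrm{Re}(z)\leq 2$. On $\mathrm{Re}(z)=0$, the operators $\mathbf{H}^{\mathrm{i}t/2}$ are $L^{p}$-bounded with at most polynomial growth in $|t|$ (a standard spectral multiplier consequence of the kernel bounds underlying Proposition \ref{kernelest}, after smoothing a sign), and $\langle\nabla\rangle^{\mathrm{i}t}$ is handled by the Mikhlin multiplier theorem; on $\mathrm{Re}(z)=2$ we use the integer bounds just established. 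The ``in particular'' consequence is then a short chain: for $\sigma_{1}\leq 0<\sigma_{2}$, $\mathbf{H}^{\sigma_{1}/2}$ is $L^{p}$-bounded, so $\|g\|_{\mathcal{W}^{\sigma_{1},p}}\lesssim\|g\|_{L^{p}}\lesssim\|g\|_{\mathcal{W}^{\sigma_{2},p}}$; for $0<\sigma_{1}\leq\sigma_{2}$, the established equivalence together with the pointwise inequalities $\langle\nabla\rangle^{\sigma_{1}-\sigma_{2}}\leq 1$ (as a Fourier multiplier) and $\langle x\rangle^{\sigma_{1}-\sigma_{2}}\leq 1$ gives the comparison. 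The main obstacle I anticipate is the clean handling of the cutoff commutators in the upper bound at non-integer $\sigma$, which requires either a careful pseudodifferential argument or a bootstrap induction on $\sigma$.
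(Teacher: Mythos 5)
Your reverse inequality is where the argument breaks. It rests on two operator-norm claims, and both are false: neither $\mathbf{H}^{\sigma/2}\langle\nabla\rangle^{-\sigma}$ nor $\mathbf{H}^{\sigma/2}\cdot M_{\langle x\rangle^{-\sigma}}$ (multiplication by $\langle x\rangle^{-\sigma}$ followed by $\mathbf{H}^{\sigma/2}$) is bounded on $L^{p}$. For the second, take the radial functions $g_{N}(x)=e^{\mathrm{i}N|x|}\phi(|x|)$ with $\phi$ a fixed bump supported in $\{2\leq |x|\leq 3\}$: then $\|\langle x\rangle^{\sigma}g_{N}\|_{L^{2}}\sim 1$ uniformly in $N$, while $\|\mathbf{H}^{\sigma/2}g_{N}\|_{L^{2}}\gtrsim N^{\sigma}$ (for $\sigma=2$ this is immediate since $\|\Delta g_{N}\|_{L^{2}}\sim N^{2}$ dominates $\||x|^{2}g_{N}\|_{L^{2}}$; for general $\sigma>0$ it follows from convexity of $\tau\mapsto\log\|\mathbf{H}^{\tau}g\|_{L^{2}}$ together with $\|\mathbf{H}g_{N}\|_{L^{2}}\gtrsim N^{2}$ and $\|\mathbf{H}^{2}g_{N}\|_{L^{2}}\lesssim N^{4}$). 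Since $(1-\chi)g_{N}=g_{N}$, the same example shows the structural problem with your decomposition, not just with the stated factorization: on the region $|x|\geq 1$ the operator $\mathbf{H}^{\sigma/2}$ still costs $\sigma$ derivatives, so the far piece can never be controlled by the weighted norm $\|\langle x\rangle^{\sigma}g\|_{L^{p}}$ alone; both norms are needed in both regions. The claim for $\mathbf{H}^{\sigma/2}\langle\nabla\rangle^{-\sigma}$ fails for the dual reason (translate a bump to distance $R$; the potential part of $\mathbf{H}^{\sigma/2}$ contributes a factor $R^{\sigma}$), and inserting the cutoff $\chi$ outside, as you do, does not repair it — one would at least need the cutoff inside the composition, e.g. $\mathbf{H}^{\sigma/2}M_{\chi}\langle\nabla\rangle^{-\sigma}$, and even then the inequality $\|\mathbf{H}^{\sigma/2}g\|_{L^{p}}\lesssim\|\langle\nabla\rangle^{\sigma}g\|_{L^{p}}+\|\langle x\rangle^{\sigma}g\|_{L^{p}}$ requires a genuinely joint analysis of derivative and weight (this half is the substance of \cite{DG09}, which the paper simply cites; it does not prove the lemma itself).

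The forward half of your plan (boundedness of $\langle\nabla\rangle^{\sigma}\mathbf{H}^{-\sigma/2}$ and $\langle x\rangle^{\sigma}\mathbf{H}^{-\sigma/2}$ at $\sigma=2$ via Mehler plus Schur, then Stein interpolation) is broadly viable, but two steps are underjustified. In the Schur test for $|x|^{2}\mathbf{H}^{-1}$, after writing $|x|^{2}\lesssim|y|^{2}+|x-y|^{2}$ the term $|y|^{2}$ is \emph{not} absorbed by $e^{-c|x-y|^{2}/\delta}$; you need the Gaussian decay of the Mehler kernel in $|x|^{2}+|y|^{2}$ (a bound of the form $K_{t}(x,y)\lesssim\delta^{-1}e^{-c|x-y|^{2}/\delta-c\delta(|x|^{2}+|y|^{2})}$, which does follow from (\ref{meh}) but is stronger than the estimate displayed in Proposition \ref{kernelest}); the iteration to $\sigma=2m$ then has left-over commutator issues ($\langle x\rangle^{4}\mathbf{H}^{-2}$ is not simply a product of two copies of $\langle x\rangle^{2}\mathbf{H}^{-1}$). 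Likewise, $L^{p}$-boundedness with polynomial growth of the imaginary powers $\mathbf{H}^{\mathrm{i}\tau}$ is true but is a Hörmander-type spectral multiplier fact, not a consequence of the Schwartz-function calculus of Proposition \ref{kernelest}. The ``in particular'' monotonicity chain at the end is fine (and, as the paper notes, also follows from Corollary \ref{sobbb}).
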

\begin{proof}
See \cite{DG09}. There the same result was proved for any Schr\"{o}dinger operator with nonnegative polynomial potential (note the latter inequality also follows from Corollary \ref{sobbb}).
\end{proof}
\begin{proposition}\label{sobolev}
%
%
%
%
%
%
We have the following estimates:\begin{equation}\label{sobolev00}\|g\|_{\mathcal{W}^{\sigma_{1},q}}\lesssim\|g\|_{\mathcal{W}^{\sigma_{2},q'}},\end{equation} if $1<q,q'<\infty$ and $\sigma_{2}-\sigma_{1}\geq2(\frac{1}{q'}-\frac{1}{q})\geq 0$.\begin{equation}\label{leib}\bigg\|\prod_{j=1}^{k}g_{j}\bigg\|_{\mathcal{W}^{\sigma,p}}\lesssim\sum_{j=1}^{k}\|g_{j}\|_{\mathcal{W}^{\sigma,q_{j}}}\prod_{i\neq j}\|g_{i}\|_{L^{q_{i}}},\end{equation}if $\sigma>0$ and $1<p,q_{j}<\infty$ with $1\leq j\leq k$ and $\sum_{j=1}^{k}\frac{1}{q_{j}}=\frac{1}{p}$.
\end{proposition}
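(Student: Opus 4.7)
The plan is to reduce both estimates to classical inequalities on $\mathbb{R}^{2}$.

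For (\ref{sobolev00}), substituting $h = \mathbf{H}^{\sigma_{2}/2}g$ rewrites the claim as the boundedness of the operator $\mathbf{H}^{-\alpha/2}\colon L^{q'}\to L^{q}$ for $\alpha = \sigma_{2}-\sigma_{1} \geq 2(1/q'-1/q) \geq 0$. The case $\alpha=0$ is trivial, so I would assume $\alpha>0$ and use the subordination identity
\begin{equation*}
\mathbf{H}^{-\alpha/2} = \frac{1}{\Gamma(\alpha/2)} \int_{0}^{\infty} t^{\alpha/2-1} e^{-t\mathbf{H}}\,dt.
\end{equation*}
Mehler's formula (\ref{meh}) gives $|K_{t}(x,y)| \lesssim t^{-1} e^{-c|x-y|^{2}/t}$ for $t \leq 1$ and $|K_{t}(x,y)| \lesssim e^{-ct} e^{-c(|x|^{2}+|y|^{2})}$ for $t \geq 1$. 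Integrating against $t^{\alpha/2-1}$ on $[1,\infty)$ yields a kernel bounded by $e^{-c(|x|^{2}+|y|^{2})}$, which trivially defines an $L^{q'}\to L^{q}$ bounded operator via H\"older. Integrating on $(0,1]$ and performing the substitution $u = |x-y|^{2}/t$ dominates the kernel pointwise by $|x-y|^{\alpha-2}\mathbf{1}_{|x-y|\leq 1}$ plus a Gaussian-decaying tail. The Riesz-type piece yields the desired $L^{q'}\to L^{q}$ boundedness at the endpoint $\alpha = 2(1/q'-1/q)$ via the Hardy-Littlewood-Sobolev inequality, and by Young's inequality above the endpoint.

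For (\ref{leib}), I would first invoke Lemma \ref{sepr} to decompose
\begin{equation*}
\|g_{1}\cdots g_{k}\|_{\mathcal{W}^{\sigma,p}} \sim \|\langle\nabla\rangle^{\sigma}(g_{1}\cdots g_{k})\|_{L^{p}} + \|\langle x\rangle^{\sigma} g_{1}\cdots g_{k}\|_{L^{p}}.
\end{equation*}
The first term is handled by the classical Kato-Ponce fractional Leibniz rule on $\mathbb{R}^{2}$, extended inductively to $k$ factors. For the second, place the weight on a single factor $g_{j}$ and apply H\"older to obtain
\begin{equation*}
\|\langle x\rangle^{\sigma} g_{1}\cdots g_{k}\|_{L^{p}} \leq \|\langle x\rangle^{\sigma} g_{j}\|_{L^{q_{j}}} \prod_{i\neq j} \|g_{i}\|_{L^{q_{i}}}.
\end{equation*}
Applying Lemma \ref{sepr} in the reverse direction bounds $\|\langle x\rangle^{\sigma} g_{j}\|_{L^{q_{j}}}$ and $\|\langle\nabla\rangle^{\sigma} g_{j}\|_{L^{q_{j}}}$ by $\|g_{j}\|_{\mathcal{W}^{\sigma,q_{j}}}$, producing the right-hand side of (\ref{leib}) after summing over $j$.

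The main obstacle is the endpoint case in part (1), where one must invoke the sharp Hardy-Littlewood-Sobolev inequality rather than the cruder Young convolution bound. Away from the endpoint, and for part (2) in its entirety, the argument reduces to direct heat-kernel computations followed by invocations of standard classical inequalities on $\mathbb{R}^{2}$.
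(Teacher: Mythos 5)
Your proposal is correct. For the product estimate (\ref{leib}) your argument is exactly the paper's: split the $\mathcal{W}^{\sigma,p}$ norm via Lemma \ref{sepr}, apply the classical Kato--Ponce fractional Leibniz rule to the $\langle\nabla\rangle^{\sigma}$ piece, put the weight $\langle x\rangle^{\sigma}$ on one factor by H\"older, and return to $\mathcal{W}^{\sigma,q_{j}}$ norms with Lemma \ref{sepr} again. For the embedding (\ref{sobolev00}), however, you take a genuinely different route: the paper simply reduces to $\sigma_{1}=0$ and quotes Lemma \ref{sepr} together with the usual Sobolev inequality ($\|g\|_{L^{q}}\lesssim\|\langle\nabla\rangle^{\sigma_{2}}g\|_{L^{q'}}\lesssim\|g\|_{\mathcal{W}^{\sigma_{2},q'}}$), whereas you prove the bound directly for the operator $\mathbf{H}^{-\alpha/2}$ by subordination to the Mehler heat kernel, obtaining a Riesz-potential-type kernel near the diagonal and then invoking Hardy--Littlewood--Sobolev at the endpoint and Young above it. Your route is more self-contained (it does not pass through Lemma \ref{sepr}, whose proof is outsourced to the literature) and in effect re-derives the Hermite--Sobolev embedding from kernel bounds; the paper's route is shorter because it leans on the classical inequality. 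One cosmetic caveat in your kernel estimate: when $\alpha=\sigma_{2}-\sigma_{1}\geq 2$ the small-time integral produces a kernel that is bounded (with a logarithm at $\alpha=2$) rather than $O(|x-y|^{\alpha-2})$ near the diagonal, so the stated pointwise domination should be adjusted there; this regime lies strictly above the endpoint (since $2(\frac{1}{q'}-\frac{1}{q})<2$), where a bounded kernel with Gaussian decay is trivially $L^{q'}\to L^{q}$ bounded, so nothing in the argument is affected.
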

\begin{proof}

In considering (1) we may assume $\sigma_{1}=0$, and the inequality follows immediately from Lemma \ref{sepr} and the usual Sobolev inequality.

As for (2), if the $\mathcal{W}^{\sigma,p}$ norm is replaced by the usual Sobolev $W^{\sigma,p}$ norm, then (\ref{leib}) is a well-known result in Fourier analysis (for $k=2$, but the general case easily follows from induction). Now using Lemma \ref{sepr}, we only need to show \begin{equation}\nonumber\|\langle x\rangle^{\sigma}g_{1}\cdots g_{k}\|_{L^{p}}\lesssim \|\langle x\rangle ^{\sigma}g_{1}\|_{L^{q_{1}}}\prod_{j=2}^{k}\|g_{j}\|_{L^{q_{j}}},\end{equation} which is simply H\"{o}lder's inequality.
\end{proof}

Before proving Strichartz and other estimates, we need a lemma, which gives a representation formula of $\mathcal{X}^{\sigma,b}$ functions.
\begin{lemma}\label{represent}
Suppose $\sigma,b\in\mathbb{R}$. Then for every $u$, if $\|u\|_{\mathcal{X}^{\sigma,b}}\lesssim 1$, we have\begin{equation}u(t,x)=\int_{\mathbb{R}}\phi(\lambda)e^{\mathrm{i}\lambda t}\sum_{k}a_{\lambda}(k)e^{-\mathrm{i}(4k+2)t}e_{k}(x)\,\mathrm{d}\lambda,\end{equation} where $\sum_{k}(4k+2)^{\sigma}|a_{\lambda}(k)|^{2}= 1$ for all $\lambda\in\mathbb{R}$. Furthermore, if $b>\frac{1}{2}$, then we also have $\int_{\mathbb{R}}|\phi(\lambda)|\,\mathrm{d}\lambda\lesssim 1$; if $b<\frac{1}{2}$ and $\mathcal{F}_{t}\langle u,e_{k}\rangle(\lambda)$ is supported in $\{|\lambda+4k+2|\leq K\}$ for each $k$, where $K\gtrsim 1$, then we also have $\int_{\mathbb{R}}|\phi(\lambda)|\,\mathrm{d}\lambda\lesssim K^{\frac{1}{2}-b}$.
\end{lemma}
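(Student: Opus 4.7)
The plan is to rewrite $u$ by taking its Fourier transform in time, expanding in the radial Hermite basis $\{e_k\}$, and then shifting the time-frequency variable by the eigenvalue $4k+2$ so that the free propagator $e^{-\mathrm{i}(4k+2)t}$ emerges naturally from each mode. Everything after that is Cauchy--Schwarz.

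Concretely, let $c_k(t)=\langle u(t,\cdot),e_k\rangle$ and $\widehat{c_k}(\tau)=\mathcal{F}_t c_k(\tau)$. Fourier inversion (in $t$) and completeness of $\{e_k\}$ give
\[
u(t,x)=(2\pi)^{-1/2}\sum_{k}\int_{\mathbb{R}}e^{\mathrm{i}\tau t}\,\widehat{c_k}(\tau)\,d\tau\;e_k(x).
\]
The substitution $\lambda=\tau+4k+2$ (done separately in each $k$-integral) rewrites this as
\[
u(t,x)=(2\pi)^{-1/2}\int_{\mathbb{R}}e^{\mathrm{i}\lambda t}\sum_{k}\widehat{c_k}\bigl(\lambda-(4k+2)\bigr)\,e^{-\mathrm{i}(4k+2)t}e_k(x)\,d\lambda,
\]
which already has the shape of the claimed identity once we split the inner coefficient into $\phi(\lambda)a_\lambda(k)$. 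Set
\[
\phi(\lambda)=\Bigl((2\pi)^{-1}\sum_{k}(4k+2)^{\sigma}\bigl|\widehat{c_k}(\lambda-(4k+2))\bigr|^{2}\Bigr)^{\!1/2},\qquad a_\lambda(k)=\frac{(2\pi)^{-1/2}\,\widehat{c_k}(\lambda-(4k+2))}{\phi(\lambda)}
\]
when $\phi(\lambda)\neq 0$, and pick an arbitrary normalized sequence otherwise (its value is immaterial). By construction $\sum_k(4k+2)^\sigma|a_\lambda(k)|^2=1$ for every $\lambda$, and the identity above becomes the desired representation.

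Under the same change of variable, the definition (\ref{bgn}) turns into
\[
\|u\|_{\mathcal{X}^{\sigma,b}}^{2}=2\pi\int_{\mathbb{R}}(1+\lambda^{2})^{b}\,\phi(\lambda)^{2}\,d\lambda\lesssim 1,
\]
so both $\int|\phi|\,d\lambda$ estimates are reduced to a single weighted Cauchy--Schwarz. When $b>\tfrac12$, pairing $\phi(\lambda)$ with $(1+\lambda^2)^{-b/2}\in L^{2}_\lambda$ gives $\int|\phi|\lesssim\|u\|_{\mathcal{X}^{\sigma,b}}\lesssim 1$. When $b<\tfrac12$ and the support assumption holds, $\widehat{c_k}$ being supported in $|\tau+4k+2|\le K$ translates, under $\lambda=\tau+4k+2$, into $\phi$ being supported in $|\lambda|\le K$; then
\[
\int|\phi|\,d\lambda\lesssim\Bigl(\int_{|\lambda|\le K}(1+\lambda^{2})^{-b}\,d\lambda\Bigr)^{\!1/2}\lesssim K^{1/2-b},
\]
using $\int_{0}^{K}(1+\lambda^{2})^{-b}\,d\lambda\sim K^{1-2b}$ for $b<\tfrac12$ and $K\gtrsim 1$.

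Honestly there is no serious obstacle in this lemma beyond bookkeeping: one just has to check that the definition of $\phi$ (essentially the $\ell^2_{(4k+2)^\sigma}$ norm of a $\lambda$-shifted slice of the time-frequency profile) is the unique choice compatible with the normalization $\sum_k(4k+2)^\sigma|a_\lambda(k)|^2=1$, and that the weighted Cauchy--Schwarz steps work out. The only mild point worth flagging is treating the zero set of $\phi$, which is harmless since $\phi(\lambda)a_\lambda(k)$ equals $\widehat{c_k}(\lambda-(4k+2))$ almost everywhere regardless of how $a_\lambda(k)$ is chosen on $\{\phi=0\}$.
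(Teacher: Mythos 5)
Your proposal is correct and follows essentially the same route as the paper: expand in the Hermite basis, Fourier transform in $t$, shift $\lambda=\tau+4k+2$, define $\phi$ as the weighted $\ell^2$ norm of the shifted slice and $a_\lambda(k)$ as the normalized coefficients, then conclude by weighted Cauchy--Schwarz in both the $b>\tfrac12$ and supported $b<\tfrac12$ cases. Your remark on the zero set of $\phi$ is a harmless detail the paper leaves implicit; nothing further is needed.
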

\begin{proof} Using radial Hermite expansion and Fourier transform, we can write\setlength\arraycolsep{2pt}
\begin{eqnarray}u(t,x)&=&(2\pi)^{-\frac{1}{2}}\sum_{k}\int_{\mathbb{R}}\mathcal{F}_{t}\langle u,e_{k}\rangle(\tau)e^{\mathrm{i}t\tau}e_{k}(x)\,\mathrm{d}\tau\nonumber\\
&=&(2\pi)^{-\frac{1}{2}}\sum_{k}\int_{\mathbb{R}}\mathcal{F}_{t}\langle u,e_{k}\rangle(\lambda-4k-2)e^{-\mathrm{i}(4k-2)t}e_{k}(x)e^{\mathrm{i}t\lambda}\,\mathrm{d}\lambda,\nonumber
\end{eqnarray}so we may choose\begin{equation}a_{\lambda}(k)=(\mathcal{F}_{t}\langle u,e_{k}\rangle)(\lambda-4k-2)\cdot\bigg(\sum_{l}(4l+2)^{\sigma}|\mathcal{F}_{t}\langle u,e_{l}\rangle(\lambda-4l-2)|^{2}\bigg)^{-\frac{1}{2}}\end{equation} and
\begin{equation}\phi(\lambda)=(2\pi)^{-\frac{1}{2}}\bigg(\sum_{l}(4l+2)^{\sigma}|\mathcal{F}_{t}\langle u,e_{l}\rangle(\lambda-4l-2)|^{2}\bigg)^{\frac{1}{2}}.\end{equation} Then we clearly have $\sum_{k}(4k+2)^{\sigma}|a_{\lambda}(k)|^{2}=1$ for each $\lambda$, and from the definition of $\mathcal{X}^{\sigma,b}$ norm we see \begin{equation}\int_{\mathbb{R}}\langle\lambda\rangle^{2b}|\phi(\lambda)|^{2}\,\mathrm{d}\lambda=\frac{1}{2\pi}\|u\|_{\mathcal{X}^{\sigma,b}}^{2}\lesssim 1.\end{equation}

If $b>\frac{1}{2}$, then $\langle\lambda\rangle^{-b}\in L^{2}(\mathbb{R})$, and it follows from  Cauchy-Schwartz that $\|\phi\|_{L^{1}}\leq \|\langle\lambda\rangle^{b}\phi\|_{L^{2}}\cdot\|\langle\lambda\rangle^{-b}\|_{L^{2}}\lesssim 1$. If instead $b<\frac{1}{2}$ and $u$ satisfies the support condition, then $\phi(\lambda)=0$ if $|\lambda|>K$. Again from Cauchy-Schwartz,\begin{equation}\|\phi\|_{L^{1}}\lesssim\bigg(\int_{|\lambda|\leq K}\langle\lambda\rangle^{-2b}\,\mathrm{d}\lambda\bigg)^{\frac{1}{2}}\sim K^{\frac{1}{2}-b}.\end{equation}
\end{proof}
\begin{proposition}\label{strichartz}
%
%
%
%
%
%
Suppose $b>\frac{1}{2}$, $\sigma_{1,2}\in\mathbb{R}$, and $1<q_{2},r_{2}<2<q,r,q_{1},r_{1}<\infty$. We have the following estimates:\begin{equation}\label{str1}\|e^{-\mathrm{i}t\mathbf{H}}g\|_{L_{t}^{r}L_{x}^{q}([-T,T]\times\mathbb{R}^{2})}\lesssim \langle T\rangle^{\frac{1}{r}}\|g\|_{L^{2}},\end{equation}
if $\frac{1}{q}+\frac{1}{r}=\frac{1}{2}$, and $g$ is defined on $\mathbb{R}^{2}$.
\begin{equation}\label{str0}\bigg\|\int_{0}^{t}e^{-\mathrm{i}(t-s)\mathbf{H}}u(s)\,\mathrm{d}s\bigg\|_{L_{t}^{r_{1}}L_{x}^{q_{1}}([-T,T]\times\mathbb{R}^{2})}\lesssim \langle T\rangle^{1+\frac{1}{r_{1}}-\frac{1}{r_{2}}}\|u\|_{L_{t}^{r_{2}}L_{x}^{q_{2}}([-T,T]\times\mathbb{R}^{2})},\end{equation}if $\frac{1}{q_{1}}+\frac{1}{r_{1}}=\frac{1}{2}$, $\frac{1}{q_{2}}+\frac{1}{r_{2}}=\frac{3}{2}$, and $u$ is defined on $[-T,T]\times\mathbb{R}^{2}$.\begin{equation}\label{str2}\|u\|_{L_{t}^{r}\mathcal{W}_{x}^{\sigma_{1},q}([-T,T]\times\mathbb{R}^{2})}\lesssim\langle T\rangle^{\frac{1}{r}}\|u\|_{\mathcal{X}^{\sigma_{2},b,T}},\end{equation}  if $\sigma_{2}-\sigma_{1}\geq 1-\frac{2}{q}-\frac{2}{r}\geq 0$, and either $u$ is defined on $[-T,T]\times\mathbb{R}^{2}$, or $u$ is defined on $\mathbb{R}\times\mathbb{R}^{2}$ and the right side is replaced by $\|u\|_{\mathcal{X}^{\sigma_{2},b}}$.\begin{equation}\label{stri2.5}\|u\|_{\mathcal{X}^{\sigma_{1},b-1,T}}\lesssim\langle T\rangle^{\frac{1}{q_{2}}-\frac{1}{2}}\|u\|_{L_{t}^{q_{2}}\mathcal{W}_{x}^{\sigma_{1},q_{2}}([-T,T]\times\mathbb{R}^{2})},\end{equation}
if $b<1$, $q_{2}>\frac{2}{2-b}$, and either $u$ is defined on $[-T,T]\times\mathbb{R}^{2}$, or $u$ is defined on $\mathbb{R}\times\mathbb{R}^{2}$, supported on $[-T,T]$, and the left side is replaced by $\|u\|_{\mathcal{X}^{\sigma_{1},b-1}}$.
\begin{equation}\label{str3}\|u\|_{\mathcal{C}([-T,T],\mathcal{H}^{\sigma_{1}}(\mathbb{R}^{2}))}\lesssim \|u\|_{\mathcal{X}^{\sigma_{1}, b, T}},\end{equation} if $u$ is defined on $[-T,T]\times\mathbb{R}^{2}$. In particular if $T\leq 1$, all the implicit constants can be taken $1$.
\end{proposition}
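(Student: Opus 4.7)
The foundation is the linear Strichartz estimate (\ref{str1}), from which all other estimates in the proposition follow by standard arguments. To prove (\ref{str1}), I would analytically continue Mehler's formula (\ref{meh}) from the heat semigroup to the unitary group by setting $\sigma = i$; this yields an explicit kernel whose modulus equals $\frac{1}{2\pi|\sin(2t)|}$, giving the dispersive estimate $\|e^{-it\mathbf{H}}\|_{L^1\to L^\infty}\lesssim|\sin(2t)|^{-1}$ away from the focusing times $t\in\frac{\pi}{2}\mathbb{Z}$. Combined with the $L^2$-unitarity of $e^{-it\mathbf{H}}$, the Keel--Tao $TT^*$ argument produces the Strichartz inequality for any admissible pair $(q,r)$ with $\frac{1}{q}+\frac{1}{r}=\frac{1}{2}$ on subintervals of $(-\pi/4,\pi/4)$. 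The basic periodicity $e^{-i\pi\mathbf{H}/2}=-\mathrm{Id}$ (which follows from the eigenvalues being $4k+2$) extends this to every subinterval of length $\pi/2$, and tiling $[-T,T]$ into $O(T)$ such pieces followed by Minkowski's inequality in $L^r_t$ delivers (\ref{str1}) with loss $\langle T\rangle^{1/r}$. For (\ref{str0}), I would combine (\ref{str1}) with its dual at the admissible pair $(q_2',r_2')$ (admissible since $\frac{1}{q_2'}+\frac{1}{r_2'}=\frac{1}{2}$) to obtain $\|\int_{-T}^T e^{is\mathbf{H}}u(s)\,ds\|_{L^2}\lesssim\langle T\rangle^{1/r_2'}\|u\|_{L^{r_2}_t L^{q_2}_x}$; composing with (\ref{str1}) yields the non-retarded Duhamel estimate with factor $\langle T\rangle^{1/r_1+1/r_2'}=\langle T\rangle^{1+1/r_1-1/r_2}$, and the retarded form follows from Christ--Kiselev since $r_1>r_2$.

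For (\ref{str2}) I would apply Lemma \ref{represent} in the $b>\frac{1}{2}$ regime, writing $u(t,x)=\int_{\mathbb{R}}\phi(\lambda)e^{i\lambda t}(e^{-it\mathbf{H}}h_\lambda)(x)\,d\lambda$ with $\|h_\lambda\|_{\mathcal{H}^{\sigma_2}}=1$ and $\|\phi\|_{L^1}\lesssim\|u\|_{\mathcal{X}^{\sigma_2,b}}$. Choosing an admissible pair $(q_0,r)$ with $\frac{1}{q_0}+\frac{1}{r}=\frac{1}{2}$, Minkowski in $\lambda$ and (\ref{str1}) applied to $\mathbf{H}^{\sigma_2/2}h_\lambda$ (using commutativity of $\mathbf{H}^{\sigma_2/2}$ with $e^{-it\mathbf{H}}$) give control in $L^r_t\mathcal{W}^{\sigma_2,q_0}_x$; the Sobolev embedding of Proposition \ref{sobolev} then passes to $L^r_t\mathcal{W}^{\sigma_1,q}_x$ under the exact hypothesis $\sigma_2-\sigma_1\geq 2(\frac{1}{q_0}-\frac{1}{q})=1-\frac{2}{q}-\frac{2}{r}$. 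The embedding (\ref{str3}) is immediate from the same representation: Minkowski in $\lambda$ combined with the identity $\|\sum_k a_\lambda(k)(4k+2)^{\sigma_1/2}e^{-i(4k+2)t}e_k\|_{L^2_x}^2=\sum_k(4k+2)^{\sigma_1}|a_\lambda(k)|^2=1$ yields $\|u(t)\|_{\mathcal{H}^{\sigma_1}}\leq\|\phi\|_{L^1}\lesssim\|u\|_{\mathcal{X}^{\sigma_1,b}}$ uniformly in $t$, with strong continuity following by dominated convergence applied to the same formula.

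For (\ref{stri2.5}) I would reduce to a one-variable problem. After extending $u$ from $[-T,T]$ to $\mathbb{R}$ by multiplication with a smooth cutoff supported in $[-2T,2T]$ (which costs only a bounded factor in the $L^{q_2}_t\mathcal{W}^{\sigma_1,q_2}_x$ norm), the Hermite expansion $u=\sum_k u_k(t)e_k(x)$ diagonalizes the Bourgain norm: after the shift $\tau\mapsto\tau-(4k+2)$, one has $\|u\|_{\mathcal{X}^{\sigma_1,b-1}}^2=\sum_k(4k+2)^{\sigma_1}\|e^{i(4k+2)t}u_k\|_{H^{b-1}(\mathbb{R})}^2$, each factor still supported in $[-2T,2T]$. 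The problem thus reduces to the one-dimensional inequality $\|f\|_{H^{b-1}(\mathbb{R})}\lesssim\|f\|_{L^{q_2}(\mathbb{R})}$ for compactly supported $f$, which follows by duality from the one-dimensional Sobolev embedding $H^{1-b}(\mathbb{R})\hookrightarrow L^{q_2'}(\mathbb{R})$: the hypothesis $q_2>\frac{2}{2-b}$ forces $q_2'<\frac{2}{b}\leq\frac{2}{2b-1}$, placing $q_2'$ strictly below the critical Sobolev exponent, and the bound $\langle T\rangle^{1/q_2-1/2}\geq 1$ for $q_2\leq 2$ absorbs any discrepancy. The main obstacle in the whole proposition is (\ref{str1}): while the Mehler-based dispersive estimate is classical, careful handling of the focusing times $t\in\frac{\pi}{2}\mathbb{Z}$ (where the kernel is singular) via the periodicity of $e^{-it\mathbf{H}}$, and sharp tracking of the constants through the tiling argument to extract the precise $\langle T\rangle^{1/r}$ loss, are the delicate technical points; every remaining estimate is a routine consequence once (\ref{str1}) is in hand.
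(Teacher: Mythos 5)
Your treatment of (\ref{str1}), (\ref{str0}), (\ref{str2}) and (\ref{str3}) is essentially the paper's own argument (Mehler dispersive bound plus $TT^{*}$, periodicity in time to get the $\langle T\rangle^{1/r}$ factor, duality and Christ--Kiselev for the Duhamel term, and Lemma \ref{represent} combined with Proposition \ref{sobolev} for the transfer to $\mathcal{X}^{\sigma,b}$), and those parts are fine. The gap is in (\ref{stri2.5}).

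The right-hand norm $L_{t}^{q_{2}}\mathcal{W}_{x}^{\sigma_{1},q_{2}}$ with $q_{2}<2$ is \emph{not} diagonal in the Hermite expansion, so the problem does not reduce to the one-variable inequality $\|f\|_{H^{b-1}}\lesssim\|f\|_{L^{q_{2}}}$. Applying that inequality mode by mode only gives $\|u\|_{\mathcal{X}^{\sigma_{1},b-1}}^{2}\lesssim\sum_{k}(4k+2)^{\sigma_{1}}\|u_{k}\|_{L_{t}^{q_{2}}}^{2}$, and the best one can then do (Minkowski, since $q_{2}\leq 2$) is to bound this by $\|u\|_{L_{t}^{q_{2}}\mathcal{H}_{x}^{\sigma_{1}}}^{2}$, an $L^{2}$-based spatial norm. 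This is \emph{not} controlled by $\|u\|_{L_{t}^{q_{2}}\mathcal{W}_{x}^{\sigma_{1},q_{2}}}$: for $q_{2}<2$ the embedding $\mathcal{W}^{\sigma_{1},q_{2}}\hookrightarrow\mathcal{H}^{\sigma_{1}}$ fails, since by (\ref{sobolev00}) one would need $2(\frac{1}{q_{2}}-\frac{1}{2})>0$ extra derivatives. So your argument proves a strictly weaker estimate, and the weaker estimate is insufficient for the way (\ref{stri2.5}) is used later (in Proposition \ref{longest} the nonlinearity is estimated exactly in norms like $L_{t}^{q_{1}}\mathcal{W}_{x}^{\sigma,q_{1}}$ with $q_{1}<2$, which H\"older naturally produces). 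The point your reduction misses is that (\ref{stri2.5}) is, in dual form, the Strichartz-type bound $\|w\|_{L_{t,x}^{q_{3}}}\lesssim\langle T\rangle^{\frac{1}{2}-\frac{1}{q_{3}}}\|w\|_{\mathcal{X}^{0,1-b}}$ for $2<q_{3}<\frac{2}{b}$, i.e.\ a gain of spatial integrability for functions with temporal regularity $1-b<\frac{1}{2}$; no argument that only uses one-dimensional Sobolev embedding in $t$ can produce an exponent $q_{3}>2$ in $x$. The paper closes this by duality together with Stein's complex interpolation (using that $\langle\mathrm{i}\partial_{t}-\mathbf{H}\rangle^{\mathrm{i}\tau}$ is an $L_{t,x}^{2}$ isometry) between the trivial case $(b,q_{3})=(1,2)$ and the $L_{t,x}^{4}$ case, which is the special case $q=r=4$ of (\ref{str2}); you would need to add such an interpolation, or some other genuine spacetime input, to prove (\ref{stri2.5}) as stated.
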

\begin{proof} For (\ref{str1}), since $e^{-\mathrm{i}t\mathbf{H}}$ is periodic, we may assume $T\lesssim 1$, thus $\langle T\rangle\sim 1$. In addition, by subdividing the interval $[-T,T]$, we may assume $T$ is small enough. Substituting $\sigma=\mathrm{i}$ in Mehler's fromula (\ref{meh}), we can easily see the integral kernel of $e^{-\mathrm{i}t\mathbf{H}}$ is an $L^{\infty}$ function in the space variables with norm $\lesssim |t|^{-1}$ for $|t|\lesssim T$. Now using the $TT^{\ast}$ method we reduce (\ref{str1}) to \begin{equation}\label{ttast}\bigg\|\int_{-T}^{T}e^{-\mathrm{i}(t-s)\mathbf{H}}u(s)\,\mathrm{d}s\bigg\|_{L_{t}^{r}L_{x}^{q}([-T,T]\times\mathbb{R}^{2})}\lesssim\|u\|_{L_{t}^{r'}L_{x}^{q'}([-T,T]\times\mathbb{R}^{2})}.\end{equation} Now we interpolate between $L^{2}$ conservation and the $L^{1}\to L^{\infty}$ inequality deduced from the $L^{\infty}$ bound of the integral kernel, to get $\|e^{-\mathrm{i}\delta \mathbf{H}}g\|_{L^{q}}\lesssim |\delta|^{\frac{2}{q}-1}\|u\|_{L^{q'}}$ for $|t|\lesssim T$. Using this and the usual Hardy-Littlewood-Sobolev fractional integral inequality, we immediately get (\ref{ttast}).

Now from (\ref{str1}) and duality we easily get \[\bigg\|\int_{0}^{T}e^{-\mathrm{i}(t-s)\mathbf{H}}u(s)\,\mathrm{d}s\bigg\|_{L_{t}^{r_{1}}L_{x}^{q_{1}}([-T,T]\times\mathbb{R}^{2})}\lesssim \langle T\rangle^{1+\frac{1}{r_{1}}-\frac{1}{r_{2}}}\|u\|_{L_{t}^{r_{2}}L_{x}^{q_{2}}([-T,T]\times\mathbb{R}^{2})},\]
for the exponents $q_{1},r_{1},q_{2},r_{2}$, thus from Christ-Kiselev lemma we get (\ref{str0}).

We now prove (\ref{str3}) and (\ref{str2}), under the assumption $\sigma_{2}-\sigma_{1}=1-\frac{2}{q}-\frac{2}{r}=0$. Here we may assume $\sigma_{1}=0$. By the definition of $\mathcal{X}^{0,b,T}$ we can assume that $u$ is defined for all $t\in\mathbb{R}$, and only need to prove that the left side of each equation is controlled by $\|u\|_{\mathcal{X}^{0,b}}$. We shall use $\|\cdot\|_{\mathfrak{X}}$ to denote either the norm $\langle T\rangle ^{-\frac{1}{r}}\|\cdot\|_{L_{t}^{r}L_{x}^{q}([-T,T]\times\mathbb{R}^{2})}$ or $\|\cdot\|_{\mathcal{C}([-T,T],L^{2}(\mathbb{R}^{2}))}$, and from what we just proved, we know $\|e^{-\mathrm{i}t\mathbf{H}}g\|_{\mathfrak{X}}\lesssim \|g\|_{L^{2}}$. Assume $\|u\|_{\mathcal{X}^{0,b}}\lesssim 1$, by Lemma \ref{represent} we write \begin{equation}u(t,x)=\int_{\mathbb{R}}\phi(\lambda)e^{\mathrm{i}\lambda t}\sum_{k}a_{\lambda}(k)e^{-\mathrm{i}(4k+2)t}e_{k}(x)\,\mathrm{d}\lambda\end{equation} with $\|\phi\|_{L^{1}}\lesssim 1$ and $\sum_{k}|a_{\lambda}(k)|^{2}=1$ for each $\lambda$. Then we have\begin{equation}\nonumber u=\int_{\mathbb{R}}\phi(\lambda)e^{\mathrm{i}\lambda t}e^{-\mathrm{i}t\mathbf{H}}\bigg(\sum_{k}a_{\lambda}(k)e_{k}\bigg)\,\mathrm{d}\lambda.\end{equation} From Minkowski and Cauchy-Schwartz we see \setlength\arraycolsep{2pt}
\begin{eqnarray}\|u\|_{\mathfrak{X}}&\lesssim&\|\phi\|_{L^{1}}\cdot\sup_{\lambda}\bigg\|e^{\mathrm{i}\lambda t}e^{-\mathrm{i}t\mathbf{H}}\bigg(\sum_{k}a_{\lambda}(k)e_{k}\bigg)\bigg\|_{\mathfrak{X}}\\
&\lesssim &\|\phi\|_{L^{1}}\cdot\sup_{\lambda}\bigg\|\sum_{k}a_{\lambda}(k)e_{k}\bigg\|_{L^{2}}\lesssim 1\nonumber,
\end{eqnarray}
proving (\ref{str3}) and this special case of (\ref{str2}). To prove (\ref{str2}) in general, we use Proposition \ref{sobolev} to reduce\[\|u\|_{L_{t}^{r}\mathcal{W}_{x}^{\sigma_{1},q}([-T,T]\times\mathbb{R}^{2})}\lesssim\|u\|_{L_{t}^{r}\mathcal{W}_{x}^{\sigma_{2},q'}([-T,T]\times\mathbb{R}^{2})}\lesssim\langle T\rangle^{\frac{1}{r}}\|u\|_{\mathcal{X}^{\sigma_{2},b,T}},\] where $\frac{1}{q'}+\frac{1}{r}=\frac{1}{2}$ (so that $2<q,q',r<\infty$ and $\sigma_{2}-\sigma_{1}\geq 2(\frac{1}{q'}-\frac{1}{q})\geq 0$), and with obvious modifications when $u$ is globally defined.

Finally we prove (\ref{stri2.5}). Again we may assume $\sigma_{1}=0$. For $v=u$ on $[-T,T]$ and $v=0$ elsewhere, we need to show \begin{equation}\|v\|_{\mathcal{X}^{0,b-1}}\lesssim \langle T\rangle^{\frac{1}{q_{2}}-\frac{1}{2}}\|u\|_{L_{t,x}^{q_{2}}([-T,T]\times\mathbb{R}^{2})}.\end{equation} For any $w$ with $\|w\|_{\mathcal{X}^{0,1-b}}\lesssim 1$, we have \begin{equation}\bigg|\int_{\mathbb{R}\times\mathbb{R}^{2}}v\bar{w}\,\mathrm{d}t\mathrm{d}x\bigg|=\bigg|\int_{[-T,T]\times\mathbb{R}^{2}}u\bar{w}\,\mathrm{d}t\mathrm{d}x\bigg|\lesssim\|w\|_{L_{t,x}^{q_{3}}([-T,T]\times\mathbb{R}^{2})}\cdot\|u\|_{L_{t,x}^{q_{2}}([-T,T]\times\mathbb{R}^{2})},\end{equation} where $q_{3}=\frac{q_{2}}{q_{2}-1}$. Thus by duality, we only need to prove $\|w\|_{L_{t,x}^{q_{3}}}\lesssim \langle T\rangle^{\frac{1}{2}-\frac{1}{q_{3}}}\|w\|_{\mathcal{X}^{0,1-b}}$ for all $2<q_{3}<\frac{2}{b}$. Since the imaginary power $\langle \mathrm{i}\partial_{t}-\mathbf{H}\rangle^{\mathrm{i}\tau}$ is an isometry on $L_{t,x}^{2}$, we can use Stein's complex interpolation to reduce to the cases $(b,q_{3})=(1,2)$ and $(b_{1},4)$, where $b_{1}=\frac{q_{3}-4+bq_{3}}{2q_{3}-4}<\frac{1}{2}$. The former is trivial by definition, and the latter is a special case of (\ref{str2}).
\end{proof}
\begin{lemma}\label{linear00}
Fix $\sigma,b\in\mathbb{R}$, $0<T\leq 1$ and a cutoff function $\psi$.

(1) If $-\frac{1}{2}<b'\leq b<\frac{1}{2}$, then for $u\in \mathcal{X}^{\sigma,b}$ we have \begin{equation}\label{linear0001}\|\psi(T^{-1}t)u\|_{\mathcal{X}^{\sigma,b'}}\lesssim T^{b-b'}\|u\|_{\mathcal{X}^{\sigma,b}}.\end{equation} Also for $u\in \mathcal{X}^{\sigma,b,T}$ we have \begin{equation}\label{linear0002}\|u\|_{\mathcal{X}^{\sigma,b',T}}\lesssim T^{b-b'}\|u\|_{\mathcal{X}^{\sigma,b,T}}.\end{equation}

(2) If $\frac{1}{2}<b'=b<1$, then for $u\in\mathcal{X}^{\sigma,b}$ with $u(0)=0$, (\ref{linear0001}) holds, as well as the limit \begin{equation}\label{linear0003}\lim_{T\to 0}\|\psi(T^{-1}t)u\|_{\mathcal{X}^{\sigma,b}}=0.\end{equation}
\end{lemma}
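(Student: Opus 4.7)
The plan is to reduce everything to classical estimates on $H^b_t(\mathbb{R})$ by conjugating away the Schr\"odinger propagator. Set $v(t,x)=e^{\mathrm{i}t\mathbf{H}}u(t,x)$; the identity $(\mathrm{i}\partial_t-\mathbf{H})u=e^{-\mathrm{i}t\mathbf{H}}\mathrm{i}\partial_t v$ together with unitarity of $e^{\mathrm{i}t\mathbf{H}}$ on $L^2_x$ and its commutation with $\mathbf{H}^{\sigma/2}$ gives $\|u\|_{\mathcal{X}^{\sigma,b}}=\|\langle \mathrm{i}\partial_t\rangle^b v\|_{L^2_t\mathcal{H}^\sigma_x}$. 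Multiplication by $\psi(T^{-1}t)$ is temporal and commutes with this conjugation, and $\mathcal{H}^\sigma_x$ is a Hilbert space, so the lemma reduces to the scalar-valued statements: for $-\tfrac12<b'\le b<\tfrac12$,
\begin{equation*}
\|\psi(T^{-1}\cdot)v\|_{H^{b'}(\mathbb{R})}\lesssim T^{b-b'}\|v\|_{H^b(\mathbb{R})},
\end{equation*}
and, for $\tfrac12<b<1$ with $v(0)=0$, the uniform bound $\|\psi(T^{-1}\cdot)v\|_{H^b}\lesssim\|v\|_{H^b}$ together with $\lim_{T\to 0}\|\psi(T^{-1}\cdot)v\|_{H^b}=0$.

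Part (1) is a classical Bourgain-space estimate. On the Fourier side $\widehat{\psi(T^{-1}\cdot)v}=\hat\psi_T*\hat v$ with $\hat\psi_T(\tau)=T\hat\psi(T\tau)$, an $L^1$-normalized mollifier essentially supported on $|\tau|\lesssim T^{-1}$. At the corner $b=b'=0$ the estimate is trivial, while at $b=0$ with $-\tfrac12<b'<0$ one bounds $\|\langle\tau\rangle^{b'}(\hat\psi_T*\hat v)\|_{L^2}$ by a weighted Cauchy--Schwarz in $\tau_1$ that uses the $L^1$-concentration of $\hat\psi_T$ on scale $T^{-1}$ to produce the factor $T^{-b'}$; duality gives the case $0<b<\tfrac12$, $b'=0$, and complex interpolation fills in the interior. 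The localized estimate (\ref{linear0002}) then follows formally: fix once and for all a cutoff $\chi$ with $\chi\equiv 1$ on $[-1,1]$; for any extension $\tilde u\in\mathcal{X}^{\sigma,b}$ of $u|_{[-T,T]}$, the function $\chi(T^{-1}t)\tilde u$ still restricts to $u$ on $[-T,T]$, so by (\ref{linear0001})
\begin{equation*}
\|u\|_{\mathcal{X}^{\sigma,b',T}}\le\|\chi(T^{-1}t)\tilde u\|_{\mathcal{X}^{\sigma,b'}}\lesssim T^{b-b'}\|\tilde u\|_{\mathcal{X}^{\sigma,b}},
\end{equation*}
and taking the infimum over $\tilde u$ concludes.

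For part (2) the hypothesis $v(0)=0$ is essential, because $\|\psi(T^{-1}\cdot)\|_{H^b}\sim T^{1/2-b}\to\infty$ as $T\to 0$, so no naive product estimate can work. Since $H^b\hookrightarrow C^0$ for $b>\tfrac12$, one has $\int_{\mathbb{R}}\hat v\,\mathrm{d}\tau_1=\sqrt{2\pi}\,v(0)=0$, and subtracting yields the cancellation identity
\begin{equation*}
\widehat{\psi_T v}(\tau)=\int_{\mathbb{R}}\bigl[\hat\psi_T(\tau-\tau_1)-\hat\psi_T(\tau)\bigr]\hat v(\tau_1)\,\mathrm{d}\tau_1.
\end{equation*}
Combining the difference-quotient bound $|\hat\psi_T(\tau-\tau_1)-\hat\psi_T(\tau)|\lesssim\min(T^2|\tau_1|,T)$ decorated with Schwartz decay in $T(\tau-\tau_1)$ and $T\tau$, with Cauchy--Schwarz in $\tau_1$ after splitting into the regions $|\tau_1|\lesssim T^{-1}$ (where the difference quotient gains) and $|\tau_1|\gtrsim T^{-1}$ (where direct decay suffices), gives the uniform bound. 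The limit (\ref{linear0003}) then follows by approximation: for $v\in C_c^\infty$ with $v(0)=0$ we have $|v(t)|\le\|v'\|_\infty|t|$, so $\psi_T v$ is supported in $[-2T,2T]$ with $\|\psi_T v\|_\infty=O(T)$ and $\|(\psi_T v)'\|_\infty=O(1)$; interpolating the resulting $L^2$ and $H^1$ bounds gives $\|\psi_T v\|_{H^b}\lesssim T^{3/2-b}\to 0$, and a general $v\in H^b$ with $v(0)=0$ is handled by density using the uniform bound just proved. The main obstacle is precisely this uniform bound: extracting the cancellation from $v(0)=0$ on the Fourier side, and carefully controlling the resulting difference kernel, is the technical heart of the argument -- once done, the scaling estimate of part (1) and the vanishing limit (\ref{linear0003}) are comparatively routine.
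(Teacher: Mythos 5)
Your reduction via $v=e^{\mathrm{i}t\mathbf{H}}u$ to scalar multiplier estimates on $H^{b}_{t}(\mathbb{R})$, your derivation of (\ref{linear0002}) from (\ref{linear0001}) by extension and a cutoff equal to $1$ on $[-1,1]$, and your part (2) are all in line with the paper. In part (2) the paper first splits $g$ into a low-frequency and a high-frequency piece, inserting a correction supported in $\{1\le|\tau|\le2\}$ so that both pieces keep zero Fourier integral, and then exploits the cancellation $\hat\psi(\tau-\xi)-\hat\psi(\tau)$ only on the low piece; your version applies the same cancellation globally and splits the $\tau_{1}$-integration at scale $T^{-1}$ instead. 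The two are essentially equivalent, and your density argument for (\ref{linear0003}) matches the paper's.

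The genuine gap is in part (1). The endpoints you actually establish are multiplication by $\psi(T^{-1}t)$ as a map $L^{2}\to H^{b'}$ with norm $\lesssim T^{-b'}$ for $-\tfrac12<b'<0$, and by duality $H^{b}\to L^{2}$ with norm $\lesssim T^{b}$ for $0<b<\tfrac12$. Complex interpolation of these two families only yields bounds $H^{\beta}\to H^{\beta'}$ with $\beta=\theta b\ge 0$ and $\beta'=(1-\theta)b'\le 0$, i.e.\ the quadrant $b'\le 0\le b$; it does not "fill in" the cases $0<b'\le b<\tfrac12$ or $-\tfrac12<b'\le b<0$, which are part of the statement and are in fact what the paper uses later (e.g.\ $\|\cdot\|_{\mathcal{X}^{\sigma,b-1,T}}\lesssim T^{2b-1}\|\cdot\|_{\mathcal{X}^{\sigma,3b-2,T}}$ in the proof of Proposition \ref{longest}, where both exponents $b-1$ and $3b-2$ are negative for $b$ close to $\tfrac12$). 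To reach these regions you need the further endpoint $b'=b$: uniform-in-$T$ boundedness of multiplication by $\psi(T^{-1}t)$ on $H^{\beta}$ for $0<|\beta|<\tfrac12$. This is not automatic — it fails for $|\beta|\ge\tfrac12$ (which is exactly why part (2) requires $u(0)=0$), and it does not follow from your two established endpoints by interpolation or by a product estimate. The paper proves it separately: after rescaling to $T=1$ it splits $\hat g$ at frequency $1$ and uses $\|\hat g_{1}\|_{L^{1}(|\tau|\le1)}\lesssim\|g_{1}\|_{\dot H^{\beta}}$ (this is where $\beta<\tfrac12$ enters) to control the low piece pointwise by $\langle\tau\rangle^{-N}$. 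Once this $b'=b$ case is added, the composition/duality/interpolation scheme (reduce to $0\le b'\le b$, then to $b'\in\{0,b\}$) recovers the full claimed range; as written, your argument does not.
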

\begin{proof}
(1) If (\ref{linear0001}) is true, then for any $u\in\mathcal{X}^{\sigma,b,T}$ and any extension $v\in\mathcal{X}^{\sigma,b}$ of $u$, we have \[\|u\|_{\mathcal{X}^{\sigma,b',T}}\leq\|\psi(T^{-1}t)v\|_{\mathcal{X}^{\sigma,b'}}\lesssim T^{b-b'}\|v\|_{\mathcal{X}^{\sigma,b}},\] provided $\psi\equiv 1$ on $[-1,1]$. Taking infimum over $v$, we get (\ref{linear0002}). Now we prove (\ref{linear0001}). Define the operator $Mu(t,x):=e^{\mathrm{i}t\mathbf{H}}u(t,\cdot)(x)$. We have \[\mathrm{i}\partial_{t}(Mu)=e^{\mathrm{i}t\mathbf{H}}(\mathrm{i}\partial_{t}-\mathbf{H})u,\] and therefore we get $\|u\|_{\mathcal{X}^{\sigma,b}}=\|Mu\|_{H_{t}^{b}\mathcal{H}_{x}^{\sigma}}$. Since $M$ also commutes with multiplication of functions of time, we can reduce to $\|\psi(T^{-1}t)v\|_{H_{t}^{b'}\mathcal{H}_{x}^{\sigma}}\lesssim T^{b-b'}\|v\|_{H_{t}^{b}\mathcal{H}_{x}^{\sigma}}$. By eigenfunction expansion, we can further reduce to \begin{equation}\label{linear0009}\|\psi(T^{-1}t)g\|_{H^{b'}}\lesssim T^{b-b'}\|g\|_{H^{b}}.\end{equation} By composition we may assume $0\leq b'\leq b$ or $b'\leq b\leq 0$, by duality we may assume $0\leq b'\leq b$, by interpolation we may assume $b'\in\{0,b\}$.

First suppose $b'=b$, and we want to prove that multiplication by $\psi(T^{-1}t)$ is bounded, independent of $T>0$, on $H^{b}$. Since it is bounded on $L^{2}$, we only need to show that it is also bounded on $\dot{H}^{b}$. By rescaling we may set $T=1$. For each $g\in\dot{H}^{b}$, we split $g=g_{1}+g_{2}$, where $\hat{g}_{1}$ is supported on $\{|\xi|\leq 1\}$ and $\hat{g}_{2}$ supported on $\{|\xi|\geq 1\}$. Multiplication by $\psi$ is obviously bounded from $H^{1}$ to $\dot{H}^{1}$, and from $L^{2}$ to $L^{2}$. So it is bounded from $H^{b}$ to $\dot{H}^{b}$, thus $\|\psi g_{2}\|_{\dot{H}^{b}}\lesssim \|g_{2}\|_{H^{b}}\lesssim\|g\|_{\dot{H}^{b}}$. Since $b<\frac{1}{2}$, we also know \[\int_{|\tau|\leq 1}|\hat{g}_{1}(\tau)|\,\mathrm{d}\tau\lesssim\big\||\tau|^{b}\hat{g}_{1}(\tau)\big\|_{L^{2}([-1,1])}\cdot\big\||\tau|^{-b}\big\|_{L^{2}([-1,1])}\lesssim\|g_{1}\|_{\dot{H}^{b}}\lesssim\|g\|_{\dot{H}^{b}}.\] Thus $(\psi g_{1})^{\wedge}(\tau)=(\hat{\psi}\ast\hat{g}_{1})(\tau)$ is bounded pointwise by $\langle\tau\rangle^{-N}\|g\|_{\dot{H}^{b}}$, since $\hat{\psi}$ is Schwartz, and the result follows. 

Next suppose $b'=0$, we only need to prove the stronger result\[\|\psi(T^{-1}t)g\|_{L^{2}}\lesssim T^{b}\|g\|_{\dot{H}^{b}}.\] By rescaling we can set $T=1$. Using the same splitting $g=g_{1}+g_{2}$, we have $\|\psi g_{2}\|_{L^{2}}\lesssim\|g_{2}\|_{L^{2}}\lesssim\|g\|_{\dot{H}^{b}}$, and $|\psi g_{1}(\tau)|\lesssim\langle \tau\rangle^{-N}\|g\|_{\dot{H}^{b}}$. This proves (\ref{linear0009}) and hence (\ref{linear0001}).

(2) We want to prove (\ref{linear0001}), and again we can reduce to (\ref{linear0009}), where we also have $g(0)=0$. Using the same arguments as in (1), we can further reduce to the boundedness on $\dot{H}^{b}$ and assume $T=1$. Split $g=g_{1}+g_{2}$ so that (though we are considering $\dot{H}^{b}$ norm here, we still assume $g\in H^{b}$, so $\hat{g}\in L^{1}$)\[\hat{g_{2}}(\tau)=\chi_{|\tau|\geq 1}\cdot\hat{g}(\tau)-\frac{1}{2}\int_{|\lambda|\geq 1}\hat{g}(\lambda)\,\mathrm{d}\lambda\cdot\chi_{1\leq|\tau|\leq 2},\] then $\hat{g}_{1}$ is supported in $\{|\tau|\leq 2\}$, $\hat{g}_{2}$ is supported in $\{|\tau|\geq 1\}$, both $\hat{g}_{i}$ has integral zero (since $\hat{g}$ has integral zero), and $\|g_{i}\|_{\dot{H}^{b}}\lesssim\|g\|_{\dot{H}^{b}}$ (since $b>\frac{1}{2}$, $\|\hat{g}\|_{L^{1}(\{|\tau|\geq 1\})}\lesssim\||\tau|^{b}\hat{g}\|_{L^{2}}=\|g\|_{\dot{H}^{b}}$). For $g_{1}$ we have $\|\psi g_{2}\|_{\dot{H}^{b}}\lesssim\|g_{2}\|_{H^{b}}\lesssim \|g\|_{\dot{H}^{b}}$ as in (1); for $g_{1}$ we have\[(\psi g_{1})^{\wedge}(\tau)=\int_{-2}^{2}(\hat{\psi}(\tau-\xi)-\hat{\psi}(\tau))\hat{g_{1}}(\xi)\,\mathrm{d}\xi.\] By Cauchy-Schwartz\[|(\psi g_{1})^{\wedge}(\tau)|\lesssim\|g_{1}\|_{\dot{H}^{b}}\bigg(\int_{-2}^{2}|\xi|^{-2b}|\hat{\psi}(\tau-\xi)-\hat{\psi}(\tau)|^{2}\,\mathrm{d}\xi\bigg)^{\frac{1}{2}}\lesssim\langle\tau\rangle^{-N}\|g\|_{\dot{H}^{b}},\] and (\ref{linear0009}) follows. Finally, to prove (\ref{linear0003}), we first use the operator $M$ and approximation by a finite linear combination of eigenfunctions to reduce to $\|\psi(T^{-1}t)g\|_{H^{b}}\to 0$ ($T\to 0$). Since this is easily verified for Schwartz $g$, we only need to check any $g\in H^{b}$ with $g(0)=0$ can be approximated by Schwartz $h$ also with $h(0)=0$. But this easily follows since $H^{b}$ is embedded in $L^{\infty}$.
\end{proof}
\begin{proposition}\label{linear}
%
%
%
%
%
%
Suppose $\frac{1}{2}<b<1$. We have\begin{equation}\label{linher1}\bigg\|\int_{0}^{t}e^{-\mathrm{i}(t-s)\mathbf{H}}u(s)\,\mathrm{d}s\bigg\|_{\mathcal{X}^{\sigma,b,T}}\lesssim\|u\|_{\mathcal{X}^{\sigma,b-1,T}},\end{equation} for $T\leq 1$. Also for $u\in \mathcal{X}^{\sigma,b,T}$, the function $\|u\|_{\mathcal{X}^{\sigma,b,\delta}}$ is continuous for $T\geq\delta>0$, and if $u(0)=0$, it tends to $0$ as $\delta\to 0$. Moreover, if $p>\frac{1}{2}$ and \begin{equation}\label{local}\|u-e^{-\mathrm{i}(t-k\delta)\mathbf{H}}u(k\delta)\|_{\mathcal{X}^{\sigma,b,[(k-1)\delta,(k+1)\delta]}}\leq C\end{equation} for $|k|\leq K$, then \begin{equation}\label{glb}\|u-e^{-\mathrm{i}t\mathbf{H}}u(0)\|_{\mathcal{X}^{\sigma,b,K\delta}}\lesssim c_{1}K^{2}\delta^{-\frac{b}{2}}.\end{equation}
\end{proposition}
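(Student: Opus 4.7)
The plan has three parts corresponding to the three assertions in the proposition.

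\textbf{Duhamel bound (\ref{linher1}).} I would follow the standard Bourgain-space argument, using the conjugation operator $Mu(t,x) = e^{\mathrm{i}t\mathbf{H}}u(t,\cdot)(x)$ introduced in the proof of Lemma \ref{linear00}. Since $M$ intertwines $\mathrm{i}\partial_{t}-\mathbf{H}$ with $\mathrm{i}\partial_{t}$, one has $\|u\|_{\mathcal{X}^{\sigma,b}} = \|Mu\|_{H_{t}^{b}\mathcal{H}_{x}^{\sigma}}$, while the Duhamel operator $u\mapsto\int_{0}^{t}e^{-\mathrm{i}(t-s)\mathbf{H}}u(s)\,\mathrm{d}s$ becomes, after conjugation, pure time-integration $v\mapsto\int_{0}^{t}v(s)\,\mathrm{d}s$ acting pointwise in $x$. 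Choosing a near-optimal extension of $u$ and multiplying the Duhamel integral by a cutoff $\psi(t/T)$, the estimate reduces to the scalar inequality $\|\psi(t/T)\int_{0}^{t}v(s)\,\mathrm{d}s\|_{H^{b}(\mathbb{R})} \lesssim \|v\|_{H^{b-1}(\mathbb{R})}$ for $T\leq 1$, which is proved by Fourier analysis in time (decomposing the antiderivative into a low-frequency piece controlled by the mean of $v$ and a high-frequency piece on which integration is a Fourier multiplier of order $-1$), combined with the temporal cutoff estimates of Lemma \ref{linear00}(2).

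\textbf{Continuity and the limit as $\delta\to 0$.} Monotonicity of $\delta\mapsto\|u\|_{\mathcal{X}^{\sigma,b,\delta}}$ is immediate. For continuity, I fix a near-optimal extension $v\in\mathcal{X}^{\sigma,b}$ of $u|_{[-\delta_{0},\delta_{0}]}$ and observe that the restrictions to $[-\delta,\delta]$ with $\delta$ close to $\delta_{0}$ differ only through a temporal sliver; the corresponding $\mathcal{X}^{\sigma,b}$-norm perturbation is controlled by multiplying $v$ by cutoffs $\psi(t/\delta)$ and invoking Lemma \ref{linear00}(1). For $\|u\|_{\mathcal{X}^{\sigma,b,\delta}}\to 0$ when $u(0)=0$, I apply Lemma \ref{linear00}(2) to $\psi(t/\delta)v$ for any extension $v$, which is permissible since $v(0)=u(0)=0$.

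\textbf{Gluing estimate (\ref{glb}).} Setting $w_{k}(t):=u(t)-e^{-\mathrm{i}(t-k\delta)\mathbf{H}}u(k\delta)$, I exploit the telescoping identity
\begin{equation*}
u(t)-e^{-\mathrm{i}t\mathbf{H}}u(0) = w_{j}(t) + \sum_{i=0}^{j-1} e^{-\mathrm{i}(t-(i+1)\delta)\mathbf{H}} w_{i}((i+1)\delta),\qquad t\in[j\delta,(j+1)\delta],
\end{equation*}
so that hypothesis (\ref{local}) bounds each $w_{k}$ on $[(k-1)\delta,(k+1)\delta]$ in $\mathcal{X}^{\sigma,b}$ by $C$, and, via (\ref{str3}), each jump vector $w_{i}((i+1)\delta)$ in $\mathcal{H}^{\sigma}$ by $C$. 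I then assemble the pieces into a global extension on $\mathbb{R}$ by a smooth partition of unity in time adapted to the grid $\{j\delta\}$, using Lemma \ref{linear00}(1) to glue the local $\mathcal{X}^{\sigma,b}$-bounds on the $w_{j}$ into a single $\mathcal{X}^{\sigma,b,K\delta}$-bound, and estimating each free evolution $e^{-\mathrm{i}(t-(i+1)\delta)\mathbf{H}}w_{i}((i+1)\delta)$ by multiplying by a cutoff on scale $\delta$. The sum in the telescope produces a factor $K$, and the $O(K)$ different $j$-intervals contribute another factor $K$, yielding the $K^{2}$ prefactor; the $\delta^{-b/2}$ loss emerges from the cumulative temporal-cutoff cost in Lemma \ref{linear00}(1).

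The main obstacle will be the last step, specifically the bookkeeping of how many copies of the temporal-cutoff loss $T^{b-b'}$ are incurred and how they combine with the gain coming from summing nearly-orthogonal pieces in the $H^{b}_{t}$ direction. One must balance two competing effects: cutoff losses grow as $\delta$ shrinks, while the $\mathcal{H}^{\sigma}$-bounds on the jump vectors are uniform in $\delta$; recombining these into a single $\mathcal{X}^{\sigma,b}$-extension on $[-K\delta,K\delta]$ without inflating the constant faster than $K^{2}\delta^{-b/2}$ is the delicate part, and it requires exploiting the almost-disjoint time supports of the telescoped contributions rather than applying the triangle inequality crudely.
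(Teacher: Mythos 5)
Your treatments of (\ref{linher1}) and of (\ref{glb}) follow essentially the paper's route: conjugation by $M$ reduces the Duhamel bound to a one-dimensional $H^{b-1}\to H^{b}$ estimate for the time integral, and the gluing estimate is the same telescoping-plus-partition-of-unity construction the paper uses (your explicit telescope is exactly the paper's term $\psi_{k}e^{-\mathrm{i}(t-k\delta)\mathbf{H}}(u(k\delta)-e^{-\mathrm{i}k\delta\mathbf{H}}u(0))$ with the accumulated jump, bounded by $K$ in $\mathcal{H}^{\sigma}$ via (\ref{str3})). One repair is needed there: you repeatedly invoke Lemma \ref{linear00}(1), but that part is stated only for $-\frac{1}{2}<b'\leq b<\frac{1}{2}$, whereas here $b>\frac{1}{2}$. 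The cutoff losses you need are a multiplication bound for $\psi_{k}$ on $\mathcal{X}^{\sigma,b}$ (the paper proves it separately, by reducing to functions of $t$ and interpolating between $L^{2}$ and $H^{1}$) together with $\|\psi_{k}\|_{H^{b}}\sim\delta^{\frac{1}{2}-b}$ for the free-evolution pieces; this is a fixable mis-citation, not a conceptual problem.

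The genuine gap is in the continuity of $\delta\mapsto\|u\|_{\mathcal{X}^{\sigma,b,\delta}}$. Fixing a near-optimal extension $v$ of $u|_{[-\delta_{0},\delta_{0}]}$ and restricting it to nearby intervals only gives upper bounds on the nearby norms, i.e.\ nothing beyond monotonicity; left continuity is the reverse inequality, that near-optimal extensions $v_{n}$ for the smaller intervals $[-\delta_{n},\delta_{n}]$, $\delta_{n}\uparrow\delta_{0}$, cannot do strictly better in the limit. The paper handles this by weak compactness: extract a weak limit $v$ of the $v_{n}$, note $\|v\|_{\mathcal{X}^{\sigma,b}}\leq\lim_{n}\|v_{n}\|$, and use the embedding into $L_{t}^{\infty}\mathcal{H}_{x}^{\sigma}$ to see $v\equiv u$ on $[-\delta_{0},\delta_{0}]$. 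Your ``temporal sliver'' heuristic cannot substitute for this: the localized $\mathcal{X}^{\sigma,b}$ norm over a shrinking sliver around a time $t_{1}$ does \emph{not} tend to zero unless the function vanishes at $t_{1}$ (it is bounded below by $\|u(t_{1})\|_{\mathcal{H}^{\sigma}}$ through the $\mathcal{C}_{t}\mathcal{H}^{\sigma}$ embedding), and even where the relevant difference does vanish, the smallness statement (\ref{linear0003}) is qualitative and not uniform over the $\delta$-dependent family $u-v_{\delta}$, so it cannot be applied along the limit. Moreover, for the sliver control in right continuity the correct tool is Lemma \ref{linear00}(2) (valid for $b>\frac{1}{2}$) applied to $w=u-v$, which vanishes on $[-\delta_{0},\delta_{0}]$ and hence at the centers $\pm\delta_{0}$ of the cutoffs — not Lemma \ref{linear00}(1) applied to the extension $v$ itself, which neither applies for $b>\frac{1}{2}$ nor yields any smallness. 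You need to add the weak-compactness argument (or an equivalent) for left continuity and rework the right-continuity step along the paper's lines.
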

\begin{proof}
 For the operator $M$ defined in the proof of Lemma \ref{linear00} we have\begin{equation}M\bigg(\int_{0}^{t}e^{-\mathrm{i}(t-s)\mathbf{H}}u(s)\,\mathrm{d}s\bigg)=\int_{0}^{t}Mu(s)\,\mathrm{d}s,\end{equation} therefore we can again use eigenfunction expansion to reduce the problem and see that (\ref{linher1}) will follow if the operator\begin{equation}g(t)\mapsto \mathcal{I}g(t):=\eta(t)\int_{0}^{t}g(s)\,\mathrm{d}s\end{equation} is bounded from $H_{t}^{b-1}$ to $H_{t}^{b}$, where $\eta$ is a fixed smooth function supported on $[-3,3]$ that equals $1$ on $[-2,2]$. Choose a smooth compactly supported function $\psi$ that equals $1$ on $[-10,10]$, and $\phi$ supported on $[-5,5]$ that equals $1$ on $[-4,4]$. Then we have\begin{equation}\mathcal{I}g(t)=\eta(t)\int_{-\infty}^{t}\psi(t-s)\phi(s)g(s)\,\mathrm{d}s-\eta(t)\int_{-5}^{0}\phi(s)g(s)\,\mathrm{d}s.\end{equation} We know multiplication by $\eta$ is bounded on $H^{b}$, multiplication by $\phi$ is bounded on $H^{1-b}$ (to prove these, we first prove them in $L^{2}$ and $H^{1}$ explicitly, then interpolate), and convolution with $\psi\cdot\chi_{[0,\infty)}$ is bounded from $H^{b-1}$ to $H^{b}$, since its Fourier transform is controlled by $\langle\tau\rangle^{-1}$. Thus the first term is bounded. For the second term, we only need to prove $|\langle g,\phi_{0}\rangle|\lesssim \|g\|_{H^{b-1}}$, where $\phi_{0}=\phi\cdot\chi_{[0,5]}$ with $|\hat{\phi_{0}}(\tau)|\lesssim \langle\tau\rangle^{-1}$. But this follows from Plancherel, Cauchy-Schwartz, and the assumption $b>\frac{1}{2}$. This proves (\ref{linher1}).

Next we consider the function $M(\delta):=\|u\|_{\mathcal{X}^{\sigma,b,\delta}}$, which is clearly nondecreasing. Since we only consider $0<\delta\leq T$, we may assume $u$ is defined for $t\in\mathbb{R}$ and belongs to $\mathcal{X}^{\sigma,b}$. For each $\delta>0$, denote by $M_{0}$ the left limit of the function $M$ at point $\delta$, and choose a sequence $\delta_{n}\uparrow \delta$, and (by definition)
  a sequence of $v_{n}$ so that $v_{n}\equiv u$ on $[-\delta_{n},\delta_{n}]$ and $\lim_{n\to\infty}\|v_{n}\|_{\mathcal{X}^{\sigma,b}}\leq M_{0}$. These $v_{n}$ have a subsequence converging weakly to some $v$ with $\|v\|_{\mathcal{X}^{\sigma,b}}\leq M_{0}$. Using the embedding $L_{t}^{\infty}\mathcal{H}_{x}^{\sigma}\supset\mathcal{X}^{\sigma,b}$, we easily see $v\equiv u$ on $[-\delta,\delta]$. This proves left continuity. To prove right continuity at $\delta$, write $M(\delta)=M_{1}$. For any $\epsilon$, we choose $v\equiv u$ on $[-\delta,\delta]$ and $\|v\|_{\mathcal{X}^{\sigma,b}}<M_{1}+\epsilon$. Let $u-v=w$ with $w\equiv 0$ on $[-\delta,\delta]$, and define\[w_{\tau}=\big(\psi(\tau^{-1}(t-\delta))+\psi(\tau^{-1}(t+\delta))\big)w,\] for some suitable cutoff which equals $1$ on a small neighborhood of $0$. From the definition of $w_{\tau}$, we see that for small $\tau$, $v+w_{\tau}\equiv u$ on a neighborhood of $[-\delta,\delta]$. From Lemma \ref{linear00} we know $\|w_{\tau}\|_{\mathcal{X}^{\sigma,b}}\to 0$ as $\tau\to 0$, thus $\|v+w_{\tau}\|_{\mathcal{X}^{\sigma,b}}<M_{1}+2\epsilon$ if $\tau$ is small enough. This proves right continuity. Finally, if $u(0)=0$, then \[\lim_{\delta\to 0}\|u\|_{\mathcal{X}^{\sigma,b,\delta}}\leq\lim_{\tau\to 0}\|\psi(\tau^{-1}t)u\|_{\mathcal{X}^{\sigma,b}}=0,\] for the same cutoff $\psi$.
  
Finally we prove (\ref{glb}). From (\ref{local}) and the embedding $\|g\|_{L_{t}^{\infty}\mathcal{H}_{x}^{\sigma}}\lesssim \|g\|_{\mathcal{X}^{\sigma,b,\delta}}$ we see in particular $\|u(k\delta)-e^{-\mathrm{i}k\delta\mathbf{H}}u(0)\|_{\mathcal{H}^{\sigma}}\lesssim K$. Now choose $w_{k}$ so that $w_{k}\equiv u-e^{-\mathrm{i}(t-k\delta)\mathbf{H}}u(k\delta)$ on $[(k-1)\delta,(k+1)\delta]$ and $\|w_{k}\|_{\mathcal{X}^{\sigma,b}}\leq C$, and choose a partition of unity $\psi_{k}$ subordinate to the covering $\{((k-1)\delta,(k+1)\delta)\}$ of $[-K\delta,K\delta]$, so that $\psi_{k}(t)=\tilde{\psi}_{k}(\frac{t}{\delta}-k)$ and $\tilde{\psi}_{k}$ have bounded Schwartz norms (this is well-known). We then have\begin{equation}w=\sum_{k}\psi_{k}w_{k}+\sum_{k}\psi_{k}e^{-\mathrm{i}(t-k\delta)\mathbf{H}}(u(k\delta)-e^{-\mathrm{i}k\delta\mathbf{H}}u(0))\equiv v \,\,\,\textrm{on}\,\,\,[-K\delta,K\delta],\end{equation} and $\|w\|_{\sigma,b}\lesssim K^{2}\delta^{-\frac{b}{2}}$, since it is easy to check (by reducing to estimates of functions of $t$ and interpolating between $L^{2}$ and $H^{1}$) that multiplication by $\psi_{k}$ is bounded from $\mathcal{X}^{\sigma,b}$ to itself with norm $\lesssim\delta^{-\frac{b}{2}}$, and that by definition\[\|\psi_{k}e^{-\mathrm{i}(t-k\delta)\mathbf{H}}(u(k\delta)-e^{-\mathrm{i}k\delta\mathbf{H}}u(0))\|_{\mathcal{X}^{\sigma,b}}=\|u(k\delta)-e^{-\mathrm{i}k\delta\mathbf{H}}u(0)\|_{\mathcal{H}^{\sigma}}\|\psi_{k}\|_{H^{b}}\lesssim K\delta^{\frac{1}{2}-b}.\] This completes the proof.
\end{proof}

\section{Construction of Gibbs measure}\label{gib}

We will construct the Gibbs measure of (\ref{nls2}) for $1<p<\infty$ (defocusing case) and $1<p<3$ (focusing case).  From the definition (\ref{random}) of $f$, it is obvious that \begin{equation}\|f(\omega)\|_{\mathcal{H}^{\tau}}^{2}=\sum_{k=0}^{\infty}(4k+2)^{-1+\tau}|g_{k}(\omega)|^{2}.\end{equation}
This expression is a.s. finite if $\tau<0$, and is a.s. infinite if $\tau\geq 0$. Thus we have\begin{equation}f(\omega)\in \mathcal{H}^{0-}:=\bigcap_{\delta>0}\mathcal{H}^{-\delta},\end{equation} a.s. in $\mathbb{P}$. Define $\mu=\mathbb{P}\circ f^{-1}$ to be the push-forward of $\mathbb{P}$ under $f$, then we see that the typical element in the support of $\mu$ belongs to any $\mathcal{H}^{-\delta}$ for all $\delta>0$, but does not belong to $L^{2}$. We also define $\mu_{2^{k}}^{\circ}=\mathbb{P}\circ (f_{2^{k}}^{\circ})^{-1}$, and $\mu_{2^{k}}^{\perp}=\mathbb{P}\circ (f_{2^{k}}^{\perp})^{-1}$. Now we prove two lemmas concerning linear and multilinear estimates of the eigenfunctions $e_{k}(x)$ as defined in (\ref{eigen}).

\begin{lemma}\label{lbg}
For any $2\leq q\leq\infty$ and $q\neq 4$, write $\nu=4k+2$ for $k\geq 0$, then we have\begin{equation}\label{fst}\|e_{k}\|_{L^{q}(\mathbb{R}^{2})}\lesssim \nu^{-\rho(q)},\end{equation} where $\rho(q)=\min\big\{\frac{1}{2}-\frac{1}{q},\frac{1}{q}\big\}$. If $q=4$ we have \begin{equation}\label{lin}\|e_{k}\|_{L^{4}(\mathbb{R}^{2})}\lesssim \nu^{-\frac{1}{4}}\log^{\frac{1}{4}}\nu.\end{equation}
\end{lemma}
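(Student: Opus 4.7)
The strategy is to reduce the $L^q(\mathbb{R}^2)$ norm of the radial function $e_k(x) = \pi^{-1/2}\mathcal{L}_k^0(|x|^2)$ to a one-dimensional integral against the Laguerre function itself, then exploit the piecewise pointwise bound (\ref{ctl}) on each of its four natural regions. Concretely, passing to polar coordinates and then substituting $z = r^2$ gives
\begin{equation*}
\|e_k\|_{L^q(\mathbb{R}^2)}^q \;=\; \pi^{1-q/2}\int_0^\infty |\mathcal{L}_k^0(z)|^q\,\mathrm{d}z,
\end{equation*}
so everything reduces to estimating this integral with $\nu = 4k+2$.

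Next I would split $[0,\infty)$ into the four intervals $I_1 = [0,1/\nu]$, $I_2 = [1/\nu,\nu/2]$, $I_3 = [\nu/2,3\nu/2]$, $I_4 = [3\nu/2,\infty)$ from (\ref{ctl}) and bound the integral on each. The region $I_1$ contributes $\lesssim \nu^{-1}$ and $I_4$ contributes an exponentially small term, so both are dominated by the inner regions. On $I_2$, the bound $|\mathcal{L}_k^0(z)|\lesssim (z\nu)^{-1/4}$ gives
\begin{equation*}
\int_{I_2}|\mathcal{L}_k^0|^q \,\lesssim\, \nu^{-q/4}\int_{1/\nu}^{\nu/2}z^{-q/4}\,\mathrm{d}z,
\end{equation*}
which evaluates to $\sim\nu^{1-q/2}$ for $q<4$, to $\sim \nu^{-1}\log\nu$ for $q=4$, and to $\sim\nu^{-1}$ for $q>4$. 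On $I_3$, the change of variable $w = \nu-z$ together with the ``Airy regime'' bound yields
\begin{equation*}
\int_{I_3}|\mathcal{L}_k^0|^q \,\lesssim\, \nu^{-q/4}\int_{-\nu/2}^{\nu/2}(\nu^{1/3}+|w|)^{-q/4}\,\mathrm{d}w,
\end{equation*}
which splits further into $|w|\lesssim\nu^{1/3}$ (contribution $\sim\nu^{1/3-q/3}$) and $\nu^{1/3}\lesssim|w|\lesssim\nu/2$ (contribution $\sim\nu^{1-q/2}$ for $q<4$, $\sim\nu^{-1}\log\nu$ at $q=4$, and $\sim \nu^{1/3-q/3}$ for $q>4$).

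Comparing regions, for $q<4$ the dominant contribution comes from $I_2$ and the tail part of $I_3$, both giving $\nu^{1-q/2}$, which after taking the $q$-th root yields $\nu^{-(1/2-1/q)}$. For $q>4$, the dominant term is $\nu^{-1}$ from $I_2$ (the $I_3$ contribution $\nu^{1/3-q/3}$ is smaller), yielding $\nu^{-1/q}$ after the $q$-th root. Since $\min\{1/2-1/q,1/q\}$ equals $1/2-1/q$ precisely when $q\leq 4$, this matches the stated exponent $\rho(q)$.

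The only real delicacy is the endpoint $q=4$, where both the $I_2$ estimate and the outer tail of $I_3$ produce $\int z^{-1}\,\mathrm{d}z$ over an interval of length $\sim \nu^2$, yielding the logarithmic factor $\log\nu$ and hence $\|e_k\|_{L^4}\lesssim \nu^{-1/4}\log^{1/4}\nu$. The endpoints $q=2$ (normalization, $\rho=0$) and $q=\infty$ (which is immediate from $|\mathcal{L}_k^0|\lesssim 1$) serve as useful sanity checks. No obstacle beyond careful bookkeeping of the four regions is expected; the only subtle point is ensuring that the log arises exactly at $q=4$ and is absorbed into the clean bound for all other $q$.
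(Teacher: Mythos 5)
Your proposal is correct and follows essentially the same route as the paper: reduce $\|e_k\|_{L^q(\mathbb{R}^2)}$ to $\|\mathcal{L}_k^0\|_{L^q(\mathbb{R}^+)}$ via polar coordinates and the substitution $z=r^2$, then integrate the pointwise bounds (\ref{ctl}) over the four regions, with the log emerging exactly at $q=4$. Your region-by-region exponents agree with the paper's computation (which records them as $\nu^{-\frac{q}{4}+|1-\frac{q}{4}|}+\nu^{\frac{1-q}{3}}+\nu^{\max(1-\frac{q}{2},\frac{1-q}{3})}\lesssim\nu^{-q\rho(q)}$), so nothing is missing.
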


\begin{proof}Since $e_{k}(x)=\pi^{-\frac{1}{2}}\mathcal{L}_{k}^{0}(|x|^{2})$, we easily see $\|e_{k}\|_{L^{q}(\mathbb{R}^{2})}\sim\|\mathcal{L}_{k}^{0}\|_{L^{q}(\mathbb{R}^{+})}$. Then we can use (\ref{ctl}) to compute \setlength\arraycolsep{2pt}
\begin{eqnarray}
\|\mathcal{L}_{k}^{0}\|_{L^{4}(\mathbb{R}^{+})}^{4}&\lesssim&\int_{0}^{\frac{1}{\nu}}\,\mathrm{d}z+\int_{\frac{1}{\nu}}^{\frac{\nu}{2}}(z\nu)^{-1}\,\mathrm{d}z\\&+&\nu^{-1}\int_{\frac{\nu}{2}}^{\frac{3\nu}{2}}\big(\nu^{\frac{1}{3}}+|\nu-z|\big)^{-1}\,\mathrm{d}z +\int_{\frac{3\nu}{2}}^{\infty}e^{-cz}\,\mathrm{d}z\nonumber\\
&\lesssim &\nu^{-1}\log\nu.\nonumber
\end{eqnarray} This proves (\ref{lin}). As for (\ref{fst}) we have\setlength\arraycolsep{2pt}
\begin{eqnarray}
\|\mathcal{L}_{k}^{0}\|_{L^{q}(\mathbb{R}^{+})}^{q}&\lesssim&\int_{0}^{\frac{1}{\nu}}\,\mathrm{d}z+\int_{\frac{1}{\nu}}^{\frac{\nu}{2}}(z\nu)^{-\frac{q}{4}}\,\mathrm{d}z\\&+&\nu^{-\frac{q}{4}}\int_{\frac{\nu}{2}}^{\frac{3\nu}{2}}\big(\nu^{\frac{1}{3}}+|\nu-z|\big)^{-\frac{q}{4}}\,\mathrm{d}z +\int_{\frac{3\nu}{2}}^{\infty}e^{-cz}\,\mathrm{d}z\nonumber\\
&\lesssim & \nu^{-\frac{q}{4}+|1-\frac{q}{4}|}+\nu^{\frac{1-q}{3}}+\nu^{\max(1-\frac{q}{2},\frac{1-q}{3})}\nonumber\\
&\lesssim &\nu^{-q\rho(q)}.\nonumber
\end{eqnarray}
\end{proof}
\begin{lemma}\label{multil}Suppose $l\geq 4$ and $n_{1},\cdots,n_{l}\geq 0$. Let $\nu_{j}=4n_{j}+2$ for $1\leq j\leq l$, and assume $\nu_{1}\gtrsim\cdots \gtrsim \nu_{l}$. Then we have\begin{equation}\label{smesti}\bigg|\int_{\mathbb{R}^{2}}e_{n_{1}}(x)\cdots e_{n_{l}}(x)\bigg|\lesssim \nu_{1}^{-\frac{1}{2}}\nu_{3}^{-\frac{1}{4}}\log\nu_{1}.\end{equation} Moreover, if $\nu_{1}\gtrsim\nu_{2}^{1+\epsilon}$ for some $\epsilon>0$, then\begin{equation}\label{lgdecay}\bigg|\int_{\mathbb{R}^{2}}e_{n_{1}}(x)\cdots e_{n_{l}}(x)\bigg|\lesssim \nu_{1}^{-N},\end{equation} for all $N>0$.
\end{lemma}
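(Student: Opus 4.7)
The starting observation is that $e_k(x)=\pi^{-1/2}\mathcal{L}_k^0(|x|^2)$, so passing to polar coordinates converts both (\ref{smesti}) and (\ref{lgdecay}) into estimates for the one-dimensional integral $\big|\int_0^\infty\mathcal{L}_{n_1}^0(z)\cdots\mathcal{L}_{n_l}^0(z)\,dz\big|$. Since $\|\mathcal{L}_{n_j}^0\|_{L^\infty}\lesssim 1$ from the first branch of (\ref{ctl}), I would put the $l-4$ smallest factors into $L^\infty$ to reduce immediately to the case $l=4$.

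For (\ref{smesti}) my plan is to decompose $\mathbb{R}^+$ into subintervals determined by the critical thresholds $1/\nu_j$, $\nu_j/2$, and $3\nu_j/2$, and on each subinterval apply the appropriate branch of (\ref{ctl}) to each factor, so that the integrand becomes an explicit power of $z$ times a constant in the $\nu_j$'s and the integration is elementary. The dominant contribution should come from the common oscillatory zone $[1/\nu_4,\nu_4/2]$, where every factor obeys $|\mathcal{L}_{n_j}^0(z)|\lesssim (z\nu_j)^{-1/4}$, producing an integrand of size $z^{-1}(\nu_1\nu_2\nu_3\nu_4)^{-1/4}$; integrating $z^{-1}$ over this range is precisely what generates the $\log\nu_1$ factor. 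To pass from the symmetric $(\nu_1\nu_2\nu_3\nu_4)^{-1/4}$ to the sharp asymmetric prefactor $\nu_1^{-1/2}\nu_3^{-1/4}$, I would split into two regimes: when $\nu_1\sim\nu_2$ the asymmetric bound follows from $\nu_2\sim\nu_1$ and $\nu_4\leq\nu_3$; when $\nu_1\gg\nu_2$ the much stronger estimate (\ref{lgdecay}) below applies and trivially implies the desired bound. Contributions from the transition zones $[\nu_j/2,3\nu_j/2]$ and the exponential tails beyond $3\nu_j/2$ are of strictly smaller order and can be absorbed.

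For (\ref{lgdecay}) my plan is to exploit the eigenvalue relation $\mathbf{H}e_{n_1}=\nu_1 e_{n_1}$. Since $\mathbf{H}$ is self-adjoint on $L^2$ and the eigenfunctions lie in $\mathcal{S}(\mathbb{R}^2)$, integrating by parts $N$ times yields
\begin{equation*}
\nu_1^N\int_{\mathbb{R}^2}e_{n_1}\prod_{j\geq 2}e_{n_j}\,dx=\int_{\mathbb{R}^2}e_{n_1}\cdot\mathbf{H}^N\bigg(\prod_{j\geq 2}e_{n_j}\bigg)\,dx
\end{equation*}
for every $N$. By Cauchy--Schwarz, $\|e_{n_1}\|_{L^2}=1$, the Leibniz-type product estimate (\ref{leib}) (placing one factor in $\mathcal{W}^{2N,2}$ and the rest in $L^\infty$, using $\|e_{n_j}\|_{L^\infty}\lesssim 1$), and the identity $\|e_{n_j}\|_{\mathcal{W}^{2N,2}}=\nu_j^N$, the right-hand $L^2$ norm is $\lesssim\nu_2^N$. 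Dividing by $\nu_1^N$ and using $\nu_1\gtrsim\nu_2^{1+\epsilon}$ gives $(\nu_2/\nu_1)^N\lesssim\nu_1^{-\epsilon N/(1+\epsilon)}$, which can be made smaller than any prescribed negative power of $\nu_1$ by choosing $N$ large enough.

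The main obstacle I anticipate is the asymmetric bookkeeping in (\ref{smesti}): the factor $\nu_1^{-1/2}$ intuitively arises because $\mathcal{L}_{n_1}^0$ is doubly penalized in the overlap region (once by its $(z\nu_1)^{-1/4}$ pointwise decay, once by the shortness of its overlap with the smaller eigenfunctions), and extracting this sharp dependence uniformly across all sub-cases requires careful case analysis. The rapid-decay estimate (\ref{lgdecay}) plays the role of a safety net, handling precisely the large-gap regime that symmetric H\"older estimates cannot reach.
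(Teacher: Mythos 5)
Your treatment of (\ref{lgdecay}) is essentially the paper's own argument: write $e_{n_{1}}=\nu_{1}^{-N}\mathbf{H}^{N}e_{n_{1}}$, use self-adjointness to move $\mathbf{H}^{N}$ onto $e_{n_{2}}\cdots e_{n_{l}}$, apply Cauchy--Schwarz and the Leibniz estimate, and gain $(\nu_{2}/\nu_{1})^{N}$. The only technical wrinkle is that (\ref{leib}) requires finite exponents, so the companion factors should be placed in a large finite Lebesgue space via Lemma \ref{lbg} (the paper uses $L^{2(l-1)}$) rather than literally in $L^{\infty}$; this is cosmetic.

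The gap is in (\ref{smesti}). After reducing to $l=4$ and taking absolute values via (\ref{ctl}), the common oscillatory zone contributes $(\nu_{1}\nu_{2}\nu_{3}\nu_{4})^{-\frac{1}{4}}\log\nu_{1}$, and this is $\lesssim\nu_{1}^{-\frac{1}{2}}\nu_{3}^{-\frac{1}{4}}\log\nu_{1}$ only when $\nu_{2}\nu_{4}\gtrsim\nu_{1}$. Your two regimes do not exhaust the possibilities: (\ref{lgdecay}) requires a power-type gap $\nu_{1}\gtrsim\nu_{2}^{1+\epsilon}$ with a \emph{fixed} $\epsilon$ (its constant blows up as $\epsilon\to0$, since the number of applications of $\mathbf{H}$ must be of size $N/\epsilon$), so the intermediate regime $C\nu_{2}\leq\nu_{1}\leq\nu_{2}^{1+\epsilon}$ --- for instance $\nu_{1}=\nu_{2}\log\nu_{2}$, $\nu_{3}\sim\nu_{2}$, $\nu_{4}=O(1)$ --- is covered by neither case, and there the bound your method yields exceeds the target by the unbounded factor $(\nu_{1}/(\nu_{2}\nu_{4}))^{\frac{1}{4}}$. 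The point is that once you take absolute values, you discard exactly the cancellation coming from $e_{n_{1}}$ oscillating faster than the other factors, and this cancellation is needed throughout the range $\nu_{1}\gg\nu_{2}$, not only in the power-gap regime. The paper's proof of (\ref{smesti}) captures it uniformly with no case analysis: write $e_{n_{1}}=\nu_{1}^{-1}\mathbf{H}e_{n_{1}}$, move one power of $\mathbf{H}$ onto $e_{n_{2}}\cdots e_{n_{l}}$ (cost $\nu_{2}$, via (\ref{leib}) and Corollary \ref{sobbb}), and apply H\"older together with $\|e_{n_{1}}\|_{L^{4}},\|e_{n_{2}}\|_{L^{4}},\|e_{n_{3}}\|_{L^{4}}$ from Lemma \ref{lbg}, which gives $\nu_{1}^{-\frac{5}{4}}\nu_{2}^{\frac{3}{4}}\nu_{3}^{-\frac{1}{4}}\log^{\frac{3}{4}}\nu_{1}\lesssim\nu_{1}^{-\frac{1}{2}}\nu_{3}^{-\frac{1}{4}}\log\nu_{1}$ in all regimes at once. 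To salvage your route you would have to run a genuine non-stationary phase argument with the Laguerre asymptotics in the regime $\nu_{2}\ll\nu_{1}\lesssim\nu_{2}^{1+\epsilon}$, or simply adopt the $m=1$ version of the $\mathbf{H}$-transfer argument you already use for (\ref{lgdecay}).
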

\begin{proof} Recall that $\mathbf{H}e_{n_{j}}=\nu_{j}e_{n_{j}}$ and $\mathbf{H}$ is self-adjoint on $L^{2}(\mathbb{R}^{2})$, we can compute using Proposition \ref{sobolev} and Lemma \ref{lbg} that  \setlength\arraycolsep{2pt}
\begin{eqnarray}
\bigg|\int_{\mathbb{R}^{2}}e_{n_{1}}(x)\cdots e_{n_{l}}(x)\bigg|&\leq&\nu_{1}^{-m}\|\mathbf{H}^m(e_{n_{2}}\cdots e_{n_{l}})\cdot e_{n_{1}}\|_{L^{1}}\nonumber\\
&\lesssim &\nu_{1}^{-m}\|e_{n_{2}}\cdots e_{n_{l}}\|_{\mathcal{H}^{2m}}\nonumber\\
&\lesssim&\nu_{1}^{-m}\sum_{j=2}^{l}\|e_{n_{j}}\|_{\mathcal{W}^{2m,2(l-1)}}\prod_{2\leq i\neq j}\|e_{n_{i}}\|_{L^{2(l-1)}}\nonumber\\&\lesssim & (\nu_{1}^{-1}\nu_{2})^{m}.\nonumber\end{eqnarray}
If $\nu_{1}\gtrsim \nu_{2}^{1+\epsilon}$, we can choose $m$ large enough and prove (\ref{lgdecay}). As for (\ref{smesti}), we choose $m=1$ and estimate
\setlength\arraycolsep{2pt}
\begin{eqnarray}
\bigg|\int_{\mathbb{R}^{2}}e_{n_{1}}(x)\cdots e_{n_{l}}(x)\bigg|&\leq&\nu_{1}^{-1}\|\mathbf{H}(e_{n_{2}}\cdots e_{n_{l}})\cdot e_{n_{1}}\|_{L^{1}}\nonumber\\
&\lesssim &\nu_{1}^{-\frac{5}{4}}\log^{\frac{1}{4}}\nu_{1}\cdot\|e_{n_{2}}\cdots e_{n_{l}}\|_{\mathcal{W}^{2,\frac{4}{3}}}\nonumber\\
&\lesssim&\nu_{1}^{-\frac{5}{4}}\log^{\frac{1}{4}}\nu_{1}\cdot\nu_{2} \|e_{n_{2}}\|_{L^{4}}\|e_{n_{3}}\|_{L^{4}}\prod_{j\geq 4}\|e_{n_{i}}\|_{L^{4(l-3)}}\nonumber\\
&\lesssim & \nu_{1}^{-\frac{5}{4}}\nu_{2}^{\frac{3}{4}}\nu_{3}^{-\frac{1}{4}}\log^{\frac{3}{4}}\nu_{1}\nonumber\\
&\lesssim&\nu_{1}^{-\frac{1}{2}}\nu_{3}^{-\frac{1}{4}}\log\nu_{1}.\nonumber
\end{eqnarray}
\end{proof}
Before we are able to state and prove the probabilistic $L^{p}$ estimates for our $\mathcal{S}'$-valued random variable $f$, we need a result proved by Fernique.
\begin{lemma}[Fernique]\label{fnq} There exist absolute constants $c,C$ such that for any finite dimensional normed vector space $(V,\|\cdot\|)$, any centered Gaussian random variable $f(\omega)$ taking its value in $V$, and any positive constant $A$, if $\mathbb{P}(\|f(\omega)\|>A)<\frac{1}{10}$, then\begin{equation}\mathbb{E}\big(e^{cA^{-2}\|f(\omega)\|^{2}}\big)\leq C.\end{equation}
\end{lemma}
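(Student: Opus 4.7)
The plan is to establish a Gaussian tail bound $\mathbb{P}(\|f\|>t)\lesssim e^{-c(t/A)^{2}}$ by an iteration argument exploiting rotational symmetry, and then integrate. The key input is the invariance of the law of $(X,Y)$ (two independent copies of $f$) under the orthogonal change of variables $(X,Y)\mapsto((X+Y)/\sqrt{2},(X-Y)/\sqrt{2})$. Because $f$ is a centered Gaussian on the finite-dimensional $V$, its distribution can be identified with a centered Gaussian measure on $\mathbb{R}^{n}$ pushed forward by a linear map, so the rotational invariance on $V\times V$ holds.

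The first step is to derive the crucial inequality. Fix $s<t$ and let $X,Y$ be independent copies of $f$. Setting $X'=(X+Y)/\sqrt{2}$ and $Y'=(X-Y)/\sqrt{2}$, which are again independent copies of $f$, one has $X=(X'+Y')/\sqrt{2}$ and $Y=(X'-Y')/\sqrt{2}$. On the event $\{\|X\|\leq s,\,\|Y\|>t\}$, the triangle inequality applied to $2X'=(X'+Y')+(X'-Y')=\sqrt{2}X+\sqrt{2}Y\cdot(\pm1)$ (more cleanly: $\sqrt{2}X'=X+Y$, $\sqrt{2}Y'=X-Y$, hence $\|X'\|,\|Y'\|\geq (\|Y\|-\|X\|)/\sqrt{2}>(t-s)/\sqrt{2}$) gives
\begin{equation}
\mathbb{P}(\|f\|\leq s)\,\mathbb{P}(\|f\|>t)\leq \mathbb{P}\!\left(\|f\|>\tfrac{t-s}{\sqrt{2}}\right)^{2}.
\end{equation}

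Next, choose $s=A$, so that by hypothesis $\mathbb{P}(\|f\|\leq A)\geq 9/10$. Define $t_{0}=A$ and $t_{n+1}=A+\sqrt{2}\,t_{n}$, so that $t_{n}\sim A\cdot(\sqrt{2})^{n}$. Writing $p_{n}=\mathbb{P}(\|f\|>t_{n})$, the inequality above becomes $p_{n+1}\leq \tfrac{10}{9}p_{n}^{2}$, and with $q_{n}=\tfrac{10}{9}p_{n}$ one gets $q_{n+1}\leq q_{n}^{2}$ starting from $q_{0}\leq 1/9<1/2$. Iterating yields $q_{n}\leq 2^{-2^{n}}$, hence $p_{n}\leq \tfrac{9}{10}\,2^{-2^{n}}$. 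Since $2^{n}\gtrsim (t_{n}/A)^{2}$ and $t\mapsto\mathbb{P}(\|f\|>t)$ is nonincreasing, interpolating between consecutive $t_{n}$'s yields absolute constants $c_{0},C_{0}>0$ with
\begin{equation}
\mathbb{P}(\|f\|>t)\leq C_{0}\exp\!\left(-c_{0}\,(t/A)^{2}\right),\qquad t>0.
\end{equation}

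The last step is just Fubini: for any $0<c<c_{0}$,
\begin{equation}
\mathbb{E}\bigl(e^{cA^{-2}\|f\|^{2}}\bigr)=1+\int_{0}^{\infty}2cA^{-2}t\,e^{cA^{-2}t^{2}}\,\mathbb{P}(\|f\|>t)\,dt\leq 1+2cC_{0}\!\int_{0}^{\infty}A^{-2}t\,e^{-(c_{0}-c)A^{-2}t^{2}}dt,
\end{equation}
which is an absolute constant $C$ by the substitution $u=A^{-1}t$. There is no real obstacle here; the only thing to be careful about is that the invariance argument uses the rotation-invariance of the law of $(X,Y)$ on $V\times V$, which is automatic because $f$ is a centered Gaussian vector in a finite-dimensional space and the map $(X,Y)\mapsto((X+Y)/\sqrt{2},(X-Y)/\sqrt{2})$ is orthogonal.
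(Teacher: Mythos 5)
Your proof is correct: the rotation-invariance identity for two independent copies, the resulting inequality $\mathbb{P}(\|f\|\leq s)\,\mathbb{P}(\|f\|>t)\leq\mathbb{P}(\|f\|>(t-s)/\sqrt{2})^{2}$, the doubling iteration along $t_{n+1}=A+\sqrt{2}\,t_{n}$, and the final tail integration are all sound, and the constants are indeed absolute. The paper itself does not prove this lemma but simply cites Fernique \cite{Fe74} and Prato--Zabczyk \cite{PZ93}; your argument is precisely the classical Fernique proof contained in those references, so you have supplied, correctly, the standard proof the paper delegates to the literature.
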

\begin{proof} See Fernique \cite{Fe74} or Prato-Zabczyk \cite{PZ93}, Theorem 2.6.
\end{proof}
\begin{proposition}\label{lin2}
Fix $2<q<\infty$, $1<r<\infty$, $0<\alpha<\min(\frac{2}{q},1-\frac{2}{q})$, and two positive integers $M>10N$. For any $g$, we define \begin{equation}\Pi  g=\sum_{j=N-1}^{M}\langle g,e_{j}\rangle e_{j}.\end{equation}Then, for the random variable $f$ as defined in (\ref{random}), we have the large deviation estimates \begin{equation}\label{dyd1}\mathbb{P}\big(\|\Pi  f(\omega)\|_{\mathcal{W}^{\alpha,q}}>AN^{-\delta}\big)\leq Ce^{-cA^{2}},\end{equation}
\begin{equation}\label{dyd2}\mathbb{P}\big(\|e^{-\mathrm{i}t\mathbf{H}}\Pi  f(\omega)\|_{L_{t}^{r}\mathcal{W}_{x}^{\alpha,q}([-T,T]\times\mathbb{R}^{2})}>AN^{-\delta}T^{\frac{1}{r}}\big)\leq Ce^{-cA^{2}},\end{equation} where $\delta>0$ is some small positive exponent.
\end{proposition}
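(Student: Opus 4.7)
The plan is to reduce both tail estimates to a single variance calculation combined with standard Gaussian concentration. The key observation is that since $e^{-it\mathbf{H}}e_j = e^{-i(4j+2)t}e_j$ and $\mathbf{H}^{\alpha/2}e_j = (4j+2)^{\alpha/2}e_j$, we can write
\begin{equation*}
\mathbf{H}^{\alpha/2}e^{-it\mathbf{H}}\Pi f(\omega)(x) = \sum_{j=N-1}^{M}(4j+2)^{(\alpha-1)/2}e^{-i(4j+2)t}g_j(\omega)e_j(x),
\end{equation*}
which at each fixed $(t,x)$ is a complex Gaussian with variance
\begin{equation*}
V(x) := \sum_{j=N-1}^{M}(4j+2)^{\alpha-1}|e_j(x)|^2,
\end{equation*}
independent of $t$ because the phases are unit-modulus.

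The core deterministic estimate is $\|V^{1/2}\|_{L^q} \lesssim N^{-\delta}$. I would obtain this from the triangle inequality in $L^{q/2}$ (valid since $q>2$) combined with Lemma \ref{lbg}:
\begin{equation*}
\|V^{1/2}\|_{L^q}^2 = \|V\|_{L^{q/2}} \leq \sum_{j=N-1}^{M}(4j+2)^{\alpha-1}\|e_j\|_{L^q}^2 \lesssim \sum_{j=N-1}^{M}(4j+2)^{\alpha-1-2\rho(q)},
\end{equation*}
with $\rho(q)=\min(\tfrac{1}{2}-\tfrac{1}{q},\tfrac{1}{q})$. The hypothesis $\alpha < \min(\tfrac{2}{q},1-\tfrac{2}{q}) = 2\rho(q)$ makes the exponent strictly less than $-1$, so the tail sum is dominated by its first term, yielding $\lesssim N^{\alpha - 2\rho(q)} = N^{-2\delta}$ with $2\delta := 2\rho(q)-\alpha>0$. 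The borderline $q=4$ case in Lemma \ref{lbg} introduces a $\log^{1/2}N$ loss, harmlessly absorbed by shrinking $\delta$ by an arbitrarily small amount.

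With the variance bound in hand, I would obtain the subgaussian tails by the moment method. Using the Gaussian $L^s$ moment inequality $\|X\|_{L^s_\omega} \lesssim \sqrt{s}\,(\mathbb{E}|X|^2)^{1/2}$ pointwise, together with Minkowski's inequality to push $L^s_\omega$ inside $L^r_t L^q_x$ (valid as soon as $s \geq \max(q,r)$), I get
\begin{equation*}
\big\|\|e^{-it\mathbf{H}}\Pi f\|_{L^r_t\mathcal{W}^{\alpha,q}_x([-T,T]\times\mathbb{R}^2)}\big\|_{L^s_\omega} \lesssim \sqrt{s}\,T^{1/r}\,\|V^{1/2}\|_{L^q} \lesssim \sqrt{s}\,T^{1/r}N^{-\delta}.
\end{equation*}
Markov's inequality then gives $\mathbb{P}(\cdots > A T^{1/r} N^{-\delta}) \leq (C\sqrt{s}/A)^s$, and optimizing $s \sim A^2/(Ce)^2$ produces the desired bound $Ce^{-cA^2}$. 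An equivalent route is to apply Chebyshev once with $s=q$ to obtain $\mathbb{P}(\cdots > C_0 T^{1/r}N^{-\delta}) < 1/10$ for large $C_0$, then invoke Fernique's Lemma \ref{fnq} on the finite-dimensional Gaussian random variable $\Pi f$ taking values in the span of $\{e_j\}_{N-1\leq j\leq M}$ equipped with the norm $g\mapsto \|e^{-it\mathbf{H}}g\|_{L^r_t\mathcal{W}^{\alpha,q}_x([-T,T])}$. The estimate \eqref{dyd1} is the same argument with the time slot removed.

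The main technical balance is exactly the matching of the assumption $\alpha<\min(2/q,1-2/q)$ against the eigenfunction decay exponent $2\rho(q)$ from Lemma \ref{lbg}; once this inequality is strict, everything else is a standard deployment of Gaussian large deviations. A minor nuisance is bookkeeping the logarithmic loss at $q=4$, but since $\delta$ is only required to be some positive exponent, this causes no essential difficulty.
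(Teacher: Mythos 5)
Your proposal is correct, and its deterministic core is identical to the paper's: both reduce matters to the pointwise variance $V(x)=\sum_{j}(4j+2)^{\alpha-1}e_j(x)^2$, bound $\|V\|_{L^{q/2}}$ by the triangle inequality plus Lemma \ref{lbg}, and use $\alpha<2\rho(q)$ to make the tail sum converge with a gain $N^{-2\delta}$ (the paper phrases this as Khintchine at fixed $t$ followed by Minkowski in $x$, which is the same computation). Where you genuinely diverge is the concentration step. The paper only uses the $q$-th moment: it gets a fixed-time subgaussian tail from Fernique (Lemma \ref{fnq}), then handles the $L^r_t$ integration by a layer-cake decomposition in $t$ plus Fubini to show the spacetime norm exceeds $CN^{-\delta}T^{1/r}$ with probability $<1/10$, and finally applies Fernique a second time to the finite-dimensional Gaussian equipped with the spacetime norm. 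You instead take $s$-th moments with $s\geq\max(q,r)$, push $L^s_\omega$ inside $L^r_tL^q_x$ by Minkowski's integral inequality, and either optimize over $s$ (which avoids Fernique and the layer-cake argument entirely) or stop at a fixed moderate $s$ and invoke Fernique once on the spacetime norm; both variants are valid and arguably cleaner, since the $\sqrt{s}$ moment growth encodes the Gaussian tail directly. Two small bookkeeping points: in your ``equivalent route'' the Chebyshev step should use $s=\max(q,r)$ rather than $s=q$ so that Minkowski applies when $r>q$, and in the moment-optimization route the constraint $s\geq\max(q,r)$ means small values of $A$ must be absorbed by enlarging the constant $C$ in $Ce^{-cA^2}$ — both are trivial to fix.
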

\begin{proof} 
We compute for each $t\in[-\pi,\pi]$\begin{equation}\mathbb{E}\big(\|e^{-\mathrm{i}t\mathbf{H}}\Pi f(\omega)\|_{\mathcal{W}_{x}^{\alpha,q}}^{q}\big)=\int_{\mathbb{R}^{2}}\mathbb{E}\bigg|\sum_{j=N-1}^{M}(4j+2)^{\frac{\alpha-1}{2}}g_{j}(\omega)e_{j}(x)\bigg|^{q}\,\mathrm{d}x.\end{equation}
Now by Khintchine's inequality (the variant for Gaussians), we have\begin{equation}
\mathbb{E}\bigg|\sum_{j=N-1}^{M}(4j+2)^{\frac{\alpha-1}{2}}g_{j}(\omega)e_{j}(x)\bigg|^{q}\lesssim\bigg(\sum_{j=N-1}^{M}\frac{e_{j}(x)^{2}}{(4j+2)^{1-\alpha}}\bigg)^{\frac{q}{2}}.\end{equation} Then integrating in $x$, using Minkowski's inequality (since $q>2$), we get \begin{equation}\label{fixt}\mathbb{E}\big(\|e^{-\mathrm{i}t\mathbf{H}}\Pi f(\omega)\|_{\mathcal{W}_{x}^{\alpha,q}}^{q}\big)\lesssim\bigg(\sum_{j=N-1}^{M}\frac{\|e_{j}\|_{L^{q}}^{2}}{(4j+2)^{1-\alpha}}\bigg)^{\frac{q}{2}}\leq CN^{-q\delta},\end{equation} due to Lemma \ref{lbg}, and the assumption $\alpha<2\rho(q)$. Now we can take $t=0$ in (\ref{fixt}) and use Markov's inequality and Lemma \ref{fnq}, and immediately get (\ref{dyd1}).

As for (\ref{dyd2}), we need a little more work. What we need is \begin{equation}\label{dev0}\mathbb{P}\big(\|e^{-\mathrm{i}t\mathbf{H}}\Pi f(\omega)\|_{L_{t}^{r}\mathcal{W}_{x}^{\alpha,q}}>CN^{-\delta}T^{\frac{1}{r}}\big)<\frac{1}{10},\end{equation} for large $C$. If the event in (\ref{dev0}) happens, then there exists an integer $l\geq 0$ such that\begin{equation}\label{dis}\big|\big\{t\in[-T,T]:\|e^{-\mathrm{i}t\mathbf{H}}\Pi f(\omega)\|_{\mathcal{W}_{x}^{\alpha,q}}>2^{l}N^{-\delta}\big\}\big|>K2^{-2rl}T.\end{equation}
For fixed $t$, due to (\ref{fixt}) and Lemma \ref{fnq}, the probability that $\|e^{-\mathrm{i}t\mathbf{H}}\Pi f(\omega)\|_{\mathcal{W}_{x}^{\alpha,q}}>2^{l}N^{-\delta}$ is less than $c_{1}e^{-c_{2}2^{2l}}$. We then use Fubini's Theorem to conclude that the probability that (\ref{dis}) happens is less than $K^{-1}c_{1}2^{2rl}e^{-c_{2}2^{2l}}$. Then we sum over $l\geq 0$ and choose $K$ large enough so that this sum is less than $\frac{1}{10}$.
\end{proof}
\begin{corollary}\label{prob}

For the same parameters $q,r,\alpha$ as in Proposition \ref{lin2}, we have \begin{equation}\label{wh}\mathbb{P}\big(\|f(\omega)\|_{\mathcal{W}^{\alpha,q}}>A\big)\leq Ce^{-cA^{2}},\end{equation} 
\begin{equation}\label{sup}\mathbb{P}\big(\sup_{k\geq 0}\|f_{2^{k}}^{\circ}(\omega)\|_{\mathcal{W}^{\alpha,q}}>A\big)\leq Ce^{-cA^{2}},\end{equation} \begin{equation}\label{wh2}\mathbb{P}\big(\|e^{-\mathrm{i}t\mathbf{H}}f(\omega)\|_{L_{t}^{r}\mathcal{W}_{x}^{\alpha,q}([-T,T]\times\mathbb{R}^{2})}>AT^{\frac{1}{r}}\big)\leq Ce^{-cA^{2}},\end{equation}\begin{equation}\label{sup2}\mathbb{P}\big(\sup_{k\geq 0}\|e^{-\mathrm{i}t\mathbf{H}}f_{2^{k}}^{\circ}(\omega)\|_{L_{t}^{r}\mathcal{W}_{x}^{\alpha,q}([-T,T]\times\mathbb{R}^{2})}>AT^{\frac{1}{r}}\big)\leq Ce^{-cA^{2}},\end{equation}
\begin{equation}\label{conv}\lim_{k\to\infty}\|f_{2^{k}}^{\circ}(\omega)-f(\omega)\|_{\mathcal{W}^{\alpha,q}}+\|e^{-\mathrm{i}t\mathbf{H}}(f_{2^{k}}^{\circ}(\omega)-f(\omega))\|_{L_{t}^{r}\mathcal{W}_{x}^{\alpha,q}([-T,T]\times\mathbb{R}^{2})}=0\,\,\,\mathrm{a.s.}\,\mathbb{P}.\end{equation}
\end{corollary}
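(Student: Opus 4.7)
The plan is to reduce all five estimates to Proposition \ref{lin2} by combining a dyadic decomposition of the random series with a geometric union bound. Set $\Pi_{k}$ to be the spectral projector onto the index range $(2^{k-1},2^{k}]$ for $k\geq 1$, and $\Pi_{0}$ the projector onto the single mode $e_{0}$. Then $f=\sum_{k\geq 0}\Pi_{k}f$, and crucially $f_{2^{k}}^{\circ}=\sum_{j\leq k}\Pi_{j}f$ is a clean partial sum. Applying Proposition \ref{lin2} to each block (the hypothesis $M>10N$ is not actually needed for its proof, since only a lower bound on the frequencies enters the Khintchine step leading to (\ref{fixt}), so any interval $[N-1,M]$ works) yields
\[\mathbb{P}\bigl(\|\Pi_{k}f(\omega)\|_{\mathcal{W}^{\alpha,q}}>B\cdot 2^{-k\delta}\bigr)\leq Ce^{-cB^{2}},\]
and an analogous estimate for the spacetime norm.

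For (\ref{wh}) and (\ref{wh2}) I apply the above with $B=A\cdot 2^{k\delta/2}$ and take a union bound,
\[\mathbb{P}\bigl(\exists\,k:\|\Pi_{k}f\|_{\mathcal{W}^{\alpha,q}}>A\cdot 2^{-k\delta/2}\bigr)\leq\sum_{k\geq 0}Ce^{-cA^{2}2^{k\delta}}\lesssim e^{-cA^{2}},\]
valid for $A$ sufficiently large (the claim is vacuous otherwise). On the complementary event, the triangle inequality together with $\sum_{k}2^{-k\delta/2}<\infty$ gives $\|f\|_{\mathcal{W}^{\alpha,q}}\lesssim A$, proving (\ref{wh}); the argument for (\ref{wh2}) is identical, starting from the spacetime bound of Proposition \ref{lin2}.

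The supremum estimates (\ref{sup}) and (\ref{sup2}) follow on the same good event, since the partial-sum structure gives $\sup_{k}\|f_{2^{k}}^{\circ}\|\leq\sum_{j\geq 0}\|\Pi_{j}f\|\lesssim A$. For the convergence (\ref{conv}), pass to the full-measure union of good events as $A\uparrow\infty$; on this set
\[\|f-f_{2^{k}}^{\circ}\|_{\mathcal{W}^{\alpha,q}}\leq\sum_{j>k}\|\Pi_{j}f\|_{\mathcal{W}^{\alpha,q}}\longrightarrow 0,\quad k\to\infty,\]
and the corresponding tail sum in $L_{t}^{r}\mathcal{W}_{x}^{\alpha,q}$ vanishes by the same estimate.

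The only delicate point is verifying that the union bound really collapses to $Ce^{-cA^{2}}$ rather than a weaker tail: this is fine because the Gaussian tails $e^{-cA^{2}2^{k\delta}}$ decay super-geometrically in $k$, so the series is dominated by its $k=0$ term, provided we first absorb a small fraction of the exponent to kill the polynomial prefactor. Everything else — the clean partial-sum identity $f_{2^{k}}^{\circ}=\sum_{j\leq k}\Pi_{j}f$, the summability of $\sum_{k}2^{-k\delta/2}$, and the tail convergence — is routine.
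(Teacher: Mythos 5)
Your proposal is correct and follows essentially the same route as the paper: decompose $f$ into the dyadic blocks $f_{2^{k}}^{\circ}-f_{2^{k-1}}^{\circ}$, apply Proposition \ref{lin2} to each block with a spare factor $2^{k\delta/2}$ (the hypothesis $M>10N$ indeed plays no role in that proof), and sum the resulting Gaussian tails over $k$. The only cosmetic difference is that the paper deduces (\ref{wh}) and (\ref{wh2}) from the supremum bounds via a weak-limit argument, whereas you obtain all five statements directly from the a.s.\ absolute convergence of the telescoping series on the good event — the same mechanism the paper itself uses for (\ref{conv}).
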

\begin{proof}

We know $f_{2^{k}}^{\circ}(\omega)\to f(\omega)$ and $e^{-\mathrm{i}t\mathbf{H}}f_{2^{k}}^{\circ}(\omega)\to e^{-\mathrm{i}t\mathbf{H}}f(\omega)$ in $\mathcal{S}'$. If we can prove (\ref{sup}) and (\ref{sup2}), then a.s. in $\mathbb{P}$, we have \begin{equation}\sup_{k\geq 0} \|e^{-\mathrm{i}t\mathbf{H}}f_{2^{k}}^{\circ}(\omega)\|_{L_{t}^{r}\mathcal{W}_{x}^{\alpha,q}}<\infty,\end{equation} and there must be a subsequence of $\{e^{-\mathrm{i}t\mathbf{H}}f_{2^{k}}^{\circ}(\omega)\}$ converging weakly in $L_{t}^{r}\mathcal{W}_{x}^{\alpha,q}$. This weak limit must be $e^{-\mathrm{i}t\mathbf{H}}f(\omega)$, so we know that\begin{equation}\label{fatou}\|e^{-\mathrm{i}t\mathbf{H}}f(\omega)\|_{L_{t}^{r}\mathcal{W}_{x}^{\alpha,q}}\leq \sup_{k\geq0}\|e^{-\mathrm{i}t\mathbf{H}}f_{2^{k}}^{\circ}(\omega)\|_{L_{t}^{r}\mathcal{W}_{x}^{\alpha,q}}<\infty,\end{equation} a.s. in $\mathbb{P}$. Thus (\ref{wh2}) also holds true, with the same constants as in (\ref{sup2}). Clearly (\ref{wh}) also follows from (\ref{sup}) in the same way.

To prove (\ref{sup}) and (\ref{sup2}), we use (\ref{dyd1}) and (\ref{dyd2}). For any $k$, the difference $f_{2^{k}}^{\circ}(\omega)-f_{2^{k-1}}^{\circ}(\omega)$ is of the form $\Pi f(\omega)$ as defined in Proposition \ref{lin2}, with the parameter $N\sim 2^{k}$. We then have, for some $\delta>0$
\begin{equation}\label{sep2}\mathbb{P}\big(\|e^{-\mathrm{i}t\mathbf{H}}(f_{2^{k}}^{\circ}(\omega)-f_{2^{k-1}}^{\circ}(\omega))\|_{L_{t}^{r}\mathcal{W}_{x}^{\alpha,q}}>A2^{-\frac{k\delta}{2}}T^{\frac{1}{r}}\big)\leq c_{1}e^{-c_{2}2^{k\delta}A^{2}}.\end{equation} Choose $c$ small enough, then
\begin{equation}\sup_{k\geq 0}\|e^{-\mathrm{i}t\mathbf{H}}f_{2^{k}}^{\circ}(\omega)\|_{L_{t}^{r}\mathcal{W}_{x}^{\alpha,q}}>AT^{\frac{1}{r}}\end{equation} implies\begin{equation}\exists k\geq 0,\,\,\|e^{-\mathrm{i}t\mathbf{H}}(f_{2^{k}}^{\circ}(\omega)-f_{2^{k-1}}^{\circ}(\omega))\|_{L_{t}^{r}\mathcal{W}_{x}^{\alpha,q}}>cA2^{-\frac{k\delta}{2}}T^{\frac{1}{r}}.\end{equation} Now we can combine this with (\ref{sep2}) to get\begin{equation}\mathbb{P}\big(\sup_{k\geq0}\|e^{-\mathrm{i}t\mathbf{H}}f_{2^{k}}^{\circ}(\omega)\|_{L_{t}^{r}\mathcal{W}_{x}^{\alpha,q}}>AT^{\frac{1}{r}}\big)\leq\sum_{k=0}^{\infty}c_{3}e^{-c_{4}2^{k\delta}A^{2}}\leq c_{5}e^{-c_{6}A^{2}}.\end{equation} This proves (\ref{sup2}). Clearly (\ref{sup}) also follows from (\ref{dyd1}) in the same way.

Finally we prove (\ref{conv}). From the above discussion we see\begin{equation}\mathbb{P}\big(\sup_{k\geq0}2^{\frac{k\delta}{2}}\|e^{-\mathrm{i}t\mathbf{H}}(f_{2^{k}}^{\circ}(\omega)-f_{2^{k-1}}^{\circ}(\omega))\|_{L_{t}^{r}\mathcal{W}_{x}^{\alpha,q}}<\infty\big)=1,\end{equation} thus with probability $1$, the series\begin{equation}\sum_{k=0}^{\infty}e^{-\mathrm{i}t\mathbf{H}}(f_{2^{k}}^{\circ}(\omega)-f_{2^{k-1}}^{\circ}(\omega))\end{equation} converges in $L_{t}^{r}\mathcal{W}_{x}^{\alpha,q}$. This can only converge to $e^{-\mathrm{i}t\mathbf{H}}f(\omega)$, and the same argument works for the space $\mathcal{W}^{\alpha,q}$. This completes the proof.
\end{proof}

Equation (\ref{nls2}) is a hamiltonian PDE with formally conserved mass $\|u\|_{L^{2}}^{2}$ and Hamiltonian \begin{equation}E(u)=\langle \mathbf{H}u,u\rangle\pm\frac{2}{p+1}\|u\|_{L^{p+1}}^{p+1}=\int_{\mathbb{R}^{n}}\bigg(|\nabla u|^{2}+|xu|^{2}\pm\frac{2}{p+1}|u|^{p+1}\bigg)\,\mathrm{d}x.\end{equation} Recall that $\mu=\mathbb{P}\circ f^{-1}$ is a probability measure on $\mathcal{S}'(\mathbb{R}^{2})$, the push-forward of $\mathbb{P}$ under $f$. In the defocusing case, for all $1<p<\infty$, we define the Gibbs measure of (\ref{nls2}) to be \begin{equation}\label{gbsm}\mathrm{d}\nu=\exp\bigg(-\frac{2}{p+1}\|u\|_{L^{p+1}}^{p+1}\bigg)\,\mathrm{d}\mu.\end{equation} Since the integrand in (\ref{gbsm}) is well-defined, bounded and positive, by Corollary \ref{prob}, we know $\nu$ is finite and mutually absolutely continuous with $\mu$. We also define the truncated measures\begin{equation}\mathrm{d}\nu_{2^{k}}=\exp\bigg(-\frac{2}{p+1}\|u_{2^{k}}^{\circ}\|_{L^{p+1}}^{p+1}\bigg)\,\mathrm{d}\mu.\end{equation} Since $\|u_{2^{k}}^{\circ}\|_{L^{p+1}}\to\|u\|_{L^{p+1}}$ a.e. in $\mu$, thanks to Corollary \ref{prob}, we know $\nu_{2^{k}}\to\nu$ in the strong sense that the total variance of $\nu_{2^{k}}-\nu$ tends to $0$.

In the focusing case, for $1<p<3$, we define the truncated measures $\mathrm{d}\nu_{2^{k}}=\rho_{2^{k}}\,\mathrm{d}\mu$ where \begin{equation}\label{cutoff}\rho_{2^{k}}(u)=\chi(\|u_{2^{k}}^{\circ}\|_{L^{2}}^{2}-\alpha_{2^{k}})\exp\bigg(\frac{2}{p+1}\|u_{2^{k}}^{\circ}\|_{L^{p+1}}^{p+1}\bigg).\end{equation} Here $\chi$ is some compactly supported continuous function on $\mathbb{R}$ that equals $1$ on a neighborhood of $0$, and\begin{equation}\alpha_{2^{k}}=\mathbb{E}\big(\|f_{2^{k}}^{\circ}(\omega)\|_{L^{2}}^{2}\big)=\sum_{j=0}^{2^{k}}\frac{1}{4j+2}.\end{equation} Clearly $\alpha_{2^{k}}\lesssim k$ for $k\geq 1$. We define the Gibbs measure $\nu$ as the limit of these $\nu_{2^{k}}$. More precisely, we have\begin{proposition}\label{intg}The functions $\rho_{2^{k}}$ converges to a function $\rho$ in $L^{r}(\mu)$ for all $1\leq r<\infty$. The measure $\mathrm{d}\nu=\rho\,\mathrm{d}\mu$ is finite and absolutely continuous with respect to $\mu$. We also know $\nu_{2^{k}}\to\nu$ in the strong sense that the total variance of $\nu_{2^{k}}-\nu$ tends to $0$. Finally, we can choose a countable number of $\chi_{(m)}$ so that the union of the supports of the corresponding Radon-Nikodym derivatives $\rho_{(m)}$ has full $\mu$ measure in $\mathcal{S}'(\mathbb{R}^{2})$. If we have fixed $\chi$, we will define $\nu$ to be the Gibbs measure of equation (\ref{nls2}).\end{proposition}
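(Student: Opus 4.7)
The proposition aggregates three convergence/structural statements together with a separability claim; the plan is to reduce the first three to showing that the densities $\rho_{2^k}$ form a Cauchy sequence in $L^r(\mu)$ for every $1\le r<\infty$. By Vitali's convergence theorem this follows once one has (A) pointwise $\mu$-a.s.\ convergence $\rho_{2^k}\to\rho$ to an explicit limit $\rho$, and (B) a uniform bound $\sup_k\|\rho_{2^k}\|_{L^{r+1}(\mu)}<\infty$ for every $r<\infty$. Granted (A) and (B), the measure $\nu:=\rho\,\mathrm{d}\mu$ is finite and absolutely continuous with respect to $\mu$, while the strong convergence $\nu_{2^k}\to\nu$ in total variation amounts simply to $\|\rho_{2^k}-\rho\|_{L^1(\mu)}\to 0$.

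\textbf{Pointwise limit.} Expand
\[\|f_{2^k}^\circ(\omega)\|_{L^2}^2-\alpha_{2^k}=\sum_{j=0}^{2^k-1}\frac{|g_j(\omega)|^2-1}{4j+2},\]
a series of independent centered random variables with summable variances $\sim(4j+2)^{-2}$. Kolmogorov's theorem together with Gaussian hypercontractivity for the higher moments produces a finite limit $Y$ in every $L^r(\mathbb{P})$ and $\mathbb{P}$-a.s. Simultaneously Corollary \ref{prob}, formula (\ref{conv}), provides $\|f_{2^k}^\circ(\omega)-f(\omega)\|_{L^{p+1}}\to 0$ a.s.\ (applied with any admissible small $\alpha>0$ and the embedding $\mathcal{W}^{\alpha,p+1}\hookrightarrow L^{p+1}$). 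Continuity of $\chi$ and of $z\mapsto\exp(\tfrac{2}{p+1}z^{p+1})$, combined with the push-forward to $\mu$, then yields $\mu$-a.s.\ convergence $\rho_{2^k}\to\rho$, where
\[\rho(u)=\chi\bigl(Y(u)\bigr)\exp\!\Big(\tfrac{2}{p+1}\|u\|_{L^{p+1}}^{p+1}\Big).\]

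\textbf{Uniform $L^r$ bound---the principal difficulty.} The compact support of $\chi$ makes $\rho_{2^k}$ vanish outside the event
\[E_k:=\bigl\{\bigl|\|u_{2^k}^\circ\|_{L^2}^2-\alpha_{2^k}\bigr|\le R\bigr\}\]
for some $R$ determined by $\chi$, and $\chi$ itself is bounded, so the whole question is to establish, for each finite $r$,
\[\sup_k\mathbb{E}_{\mu}\!\left[\mathbf{1}_{E_k}\exp\!\Big(\tfrac{2r}{p+1}\|u_{2^k}^\circ\|_{L^{p+1}}^{p+1}\Big)\right]<\infty.\]
This is the Lebowitz-Rose-Speer \cite{LRS89}/Bourgain \cite{Bo94} type inequality, and it is here, and only here, that the hypothesis $1<p<3$ enters. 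The strategy is the layer-cake reduction to a tail bound of the form
\[\mu\!\bigl(E_k\cap\{\|u_{2^k}^\circ\|_{L^{p+1}}^{p+1}>t\}\bigr)\le C e^{-c\,t^{\kappa}},\qquad \kappa>1,\]
with decay rate uniform in $k$. To produce such a tail, one views $\|u_{2^k}^\circ\|_{L^{p+1}}^{p+1}$ as a polynomial of fixed degree $p+1$ in the finitely many independent complex Gaussians $\{g_j\}_{j<2^k}$; Nelson hypercontractivity then yields moment bounds of the shape $\|\cdot\|_{L^m}\lesssim m^{(p+1)/2}\|\cdot\|_{L^2}$, while the $L^2$ variance of this polynomial is controlled uniformly in $k$ on the event $E_k$ by the sharp pointwise Laguerre bounds of Lemma \ref{lbg} and the multilinear estimate of Lemma \ref{multil}. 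The improvement coming from the mass cut-off is what makes $\kappa$ exceed $1$, and a book-keeping of exponents shows that this is possible precisely when $p+1<4$, i.e.\ when $p<3$. Carrying out this quantitative gain is the hard step; at $p=3$ one enters the delicate Lebowitz-Rose-Speer regime requiring a stricter mass cut-off, while for $p>3$ the exponential is no longer integrable and the construction breaks down entirely.

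\textbf{Separability.} Let $\chi_{(m)}$ be a sequence of continuous cut-offs with $\chi_{(m)}\equiv 1$ on $[-m,m]$ and compactly supported; the corresponding density $\rho_{(m)}$ is strictly positive on the set $\{|Y|\le m\}$. Since $Y$ is $\mu$-a.s.\ finite by the pointwise limit step, the sets $\{|Y|\le m\}$ exhaust $\mathcal{S}'(\mathbb{R}^2)$ up to a $\mu$-null set, so the union of the supports of $\rho_{(m)}$ has full $\mu$-measure, completing the proof.
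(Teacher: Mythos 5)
Your overall architecture (a.s.\ pointwise convergence of $\rho_{2^k}$, plus a uniform-in-$k$ tail estimate strong enough to give uniform integrability, plus the exhaustion-by-cutoffs argument for the support) is the same as the paper's, and your pointwise-limit and separability steps are correct and essentially identical to what the paper does. The problem is that the entire content of the proposition lives in the tail estimate you label "the principal difficulty," i.e.\ the analogue of the paper's bound (\ref{dev}), and you do not prove it: you explicitly defer "carrying out this quantitative gain," which is precisely the hard step. Moreover, the mechanism you sketch for it is doubtful. For $1<p<3$ with $p+1$ not an even integer, $\|u_{2^k}^\circ\|_{L^{p+1}}^{p+1}=\int |u_{2^k}^\circ|^{p+1}$ is \emph{not} a polynomial of fixed degree in the Gaussians $g_j$, so Nelson hypercontractivity for Wiener chaoses (Proposition \ref{hypercon}) does not apply to it as stated; and even granting a chaos-type moment bound of degree $p+1$, the resulting tail is $\exp(-ct^{2/(p+1)})$, i.e.\ $\kappa=2/(p+1)<1$, so all of the required gain must come from the mass cutoff --- which is exactly the part you leave unexplained. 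Lemma \ref{multil} plays no role here and would not supply it.

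Concretely, the point your sketch misses is that the cutoff event only gives $\|u_{2^k}^\circ\|_{L^2}^2\le \alpha_{2^k}+O(1)$ with $\alpha_{2^k}\sim k\to\infty$, so the "variance" available to you is \emph{not} uniformly bounded in $k$, and uniformity of the tail bound is genuinely delicate. The paper handles this by an interpolation argument ($\|f_{2^k}^\circ\|_{L^{p+1}}$ large together with $\|f_{2^k}^\circ\|_{L^2}^2\lesssim A^\delta$ forces $\|f_{2^k}^\circ\|_{L^q}\gtrsim A^\sigma$ with $2\sigma>p+1$ when $p<3$, whose probability is Gaussian-small by Corollary \ref{prob}), which however only works for $k\le k_0$ with $2^{k_0}\sim e^{A^\delta}$; for $k\ge k_0+2$ it needs two further inputs, namely the super-exponential smallness of $\|f_{2^k}^\circ-f_{2^{k_0}}^\circ\|_{L^{p+1}}$ from Proposition \ref{lin2} (since the frequency cutoff is at $N\sim e^{A^\delta}$) and an exponential-moment bound for the mass increment $Y=\sum_{j>2^{k_0}}\frac{1-|g_j|^2}{4j+2}$, whose standard deviation is $\lesssim 2^{-k_0/2}$. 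None of this three-case analysis, nor any substitute for it, appears in your proposal, so as written the argument has a genuine gap at its central step; the remaining parts (Vitali/uniform integrability, total-variation convergence, choice of $\chi_{(m)}$) are fine.
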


\begin{proof}
First we prove that $\rho_{2^{k}}$ converges a.e. in $\mu$, or equivalently, that $\rho_{2^{k}}(f(\omega))$ converges a.s. in $\mathbb{P}$. Consider \begin{equation}\|f_{2^{k}}^{\circ}(\omega)\|_{L^{2}}^{2}-\alpha_{2^{k}}=\sum_{j=0}^{k}\frac{|g_{j}(\omega)|^{2}-1}{4j+2},\end{equation}and see that it is a (partial) independent sum of random variables with zero mean and summable variance (the variance of $j$-th term is $\sim (j+1)^{-2}$), so it converges almost surely. Thus by the continuity of $\chi$, the first factor $\chi(\|f_{2^{k}}^{\circ}(\omega)\|_{L^{2}}^{2}-\alpha_{2^{k}})$ in $\rho_{2^{k}}(f(\omega))$ converges almost surely. Next, since $f_{2^{k}}^{\circ}(\omega)\to f(\omega)$ in $L^{p+1}$ for a.s. $\omega\in\Omega$, we know that the second factor also converges almost surely. Therefore, we have that a.e. in $\mu$, $\rho_{2^{k}}$ converges, say to some $\rho$.

To prove $\rho_{2^{k}}(f)\to\rho(f)$ in $L^{r}(\mathbb{P})$, we need some uniform integrability conditions. This is provided by the following large deviation estimate\begin{equation}\label{dev}\mathbb{P}\big(\|f_{2^{k}}^{\circ}(\omega)\|_{L^{2}}^{2}-\alpha_{2^{k}}\leq \beta,\,\,\,\,\|f_{2^{k}}^{\circ}(\omega)\|_{L^{p+1}}>A\big)\leq Ce^{-cA^{\delta}},\end{equation} for some $\delta>p+1$ and all large enough $A$, where $\beta$ is such that $\chi(z)=0$ for $|z|\geq\beta$. To prove (\ref{dev}) we may assume $A$ is sufficiently large, and set $k_{0}\in\mathbb{N}$ so that $2^{k_{0}}\sim e^{A^{\delta}}$ for some $\delta>0$ to be determined later.

First we prove (\ref{dev}) is true for $k\leq k_{0}+1$, with $\beta$ and $A$ on the left side replaced by $2\beta$ and $\frac{A}{2}$. In fact, by H\"{o}lder's inequality, if \begin{equation}\label{arg1}\|f_{2^{k}}^{\circ}(\omega)\|_{L^{2}}^{2}\leq \alpha_{2^{k}}+2\beta\lesssim k\lesssim A^{\delta},\,\,\,\,\,\|f_{2^{k}}^{\circ}(\omega)\|_{L^{p+1}}>\frac{A}{2},\end{equation} then\begin{equation}\|f_{2^{k}}^{\circ}(\omega)\|_{L^{q}}\gtrsim A^{\sigma},\,\,\,\,\,\sigma=\frac{(q-2)(p+1)-\delta(q-p-1)}{(p-1)q},\end{equation} under the assumption $p+1\leq q<\infty$. Since $2<q<\infty$, we know from Corollary \ref{prob} that\begin{equation}\label{arg2}\mathbb{P}\big(\|f_{2^{k}}^{\circ}(\omega)\|_{L^{q}}>A^{\sigma}\big)\leq Ce^{-cA^{2\sigma}}.\end{equation} If $1<p<3$, then for $q$ sufficiently large and $\delta$ sufficiently small, we have $2\sigma>p+1$, so (\ref{dev}) is true in this case.

Next we assume $k\geq k_{0}+2$. In this case we can prove\begin{equation}\label{eexp}\mathbb{P}\bigg(\|f_{2^{k}}^{\circ}(\omega)-f_{2^{k_{0}}}^{\circ}(\omega)\|_{L^{p+1}}>\frac{A}{2}\bigg)\leq c_{1}e^{-c_{2}e^{c_{3}A^{c_{4}}}}.\end{equation} In fact, since $f_{2^{k}}^{\circ}(\omega)-f_{2^{k_{0}}}^{\circ}(\omega)$ is of the form $\Pi f(\omega)$ as defined in Proposition \ref{lin2}, with the parameter $N\sim 2^{k_{0}}$, by Proposition \ref{lin2} we immediately get (\ref{eexp}) (notice $N\sim e^{A^{\delta}}$).

Now if $\|f_{2^{k}}^{\circ}(\omega)\|_{L^{2}}^{2}\leq \alpha_{k}+\beta$ and $\|f_{2^{k}}^{\circ}(\omega)\|_{L^{p+1}}>A$, then we have three possibilities. 

(1) If $\|f_{2^{k}}^{\circ}(\omega)-f_{2^{k_{0}}}^{\circ}(\omega)\|_{L^{p+1}}>\frac{A}{2}$, then we are already done, since this probability is controlled due to (\ref{eexp}).
 
 (2) If $\|f_{2^{k_{0}}}^{\circ}(\omega)\|_{L^{p+1}}>\frac{A}{2}$ and $\|f_{2^{k_{0}}}^{\circ}(\omega)\|_{L^{2}}^{2}\leq\alpha_{k_{0}}+2\beta$, Then we may set $k=k_{0}$ in the arguments from (\ref{arg1}) to (\ref{arg2}), and again get the desired bound.

(3) If $\|f_{2^{k}}^{\circ}(\omega)\|_{L^{2}}^{2}\leq \alpha_{2^{k}}+\beta$ as well as $\|f_{2^{k_{0}}}^{\circ}(\omega)\|_{L^{2}}^{2}> \alpha_{2^{k_{0}}}+2\beta$, then\begin{equation}\|f_{2^{k}}^{\circ}(\omega)\|_{L^{2}}^{2}-\|f_{2^{k_{0}}}^{\circ}(\omega)\|_{L^{2}}^{2}-(\alpha_{2^{k}}-\alpha_{2^{k_{0}}})\leq-\beta,\end{equation} or equivalently\begin{equation}Y=\sum_{j=2^{k_{0}}+1}^{2^{k}}\frac{1-|g_{j}|^{2}}{4j+2}\geq\beta.\end{equation} Notice that $Y$ is an independent sum with standard deviation \begin{equation} \kappa=\bigg(\sum_{j=2^{k_{0}}+1}^{2^{k}}\frac{1}{(4j+2)^{2}}\bigg)^{\frac{1}{2}}\lesssim 2^{-\frac{k_{0}}{2}}\leq c_{1}e^{-c_{2}A^{c_{3}}},\end{equation} we can compute \setlength\arraycolsep{2pt}
\begin{eqnarray}\label{fir}
\mathbb{E}\bigg(\exp\big(\frac{Y}{2\kappa}\big)\bigg)&=&\prod_{j=2^{k_{0}}+1}^{2^{k}}\mathbb{E}\bigg(\exp\big(\frac{\kappa(1-|g_{j}|^{2})}{2(4j+2)}\big)\bigg)\\ &=&\prod_{j=2^{k_{0}}+1}^{2^{k}}\bigg(e^{\frac{\kappa}{2(4j+2)}}\big(1+\frac{\kappa}{2(4j+2)}\big)^{-1}\bigg)\nonumber\\ \label{var} &\leq &\prod_{j=2^{k_{0}}+1}^{2^{k}}e^{\frac{c\theta_{j}^{2}}{4(4j+2)^{2}\kappa^{2}}}=e^{\frac{c}{4}}.\nonumber
\end{eqnarray}Here we have used the fact that $\mathbb{E}(e^{-\lambda|g|^{2}})=(1+\lambda)^{-1}$ when $\lambda>-1$, and $g$ is a normalized complex Gaussian; and that $e^{x}(1+x)^{-1}\leq e^{cx^{2}}$ for large $c$, and $0\leq x\leq\frac{1}{2}$. Therefore we have obtained\begin{equation}\mathbb{P}(Y>\beta)\leq e^{-c\kappa^{-1}}\leq c_{1}e^{-c_{2}e^{c_{3}A^{c_{4}}}}.\end{equation}
 
 This completes the proof of (\ref{dev}). The other conclusions now follow easily from this large deviation estimate, except the one regarding the support of $\rho$. We choose a sequence of cutoff functions $\chi_{(m)}$ so that $\chi_{(m)}\equiv 1$ on $[-\gamma_{m},\gamma_{m}]$ with $\gamma_{m}\uparrow\infty$.  By our previous discussions, after discarding null sets, the function $\rho_{(m)}$ will be nonzero wherever \begin{equation}\label{limitsmall}\lim_{k\to\infty}\big|\|f_{2^{k}}^{\circ}(\omega)\|_{L^{2}}^{2}-\alpha_{2^{k}}\big|\leq\gamma_{m}.\end{equation} Since this limit exists almost surely, and $\gamma_{m}\uparrow\infty$, we know almost surely, (\ref{limitsmall}) will hold for at least one $m$. So the union of support of these $\rho_{j}$ will have full $\mu$ measure.
 \end{proof}

Now in both defocusing and focusing case we have defined the Gibbs measure $\nu$ and the approximating measure $\nu_{2^{k}}$. They will be used in Section \ref{gwp} to obtain global well-posedness, and the invariance of $\nu$ will be proved in Section \ref{inv}.

\section{Multilinear Analysis in $\mathcal{X}^{\sigma,b}$ Spaces}\label{nonlinear}
First let us recall the hypercontractivity property of complex Gaussians. To make equations easier to write, we introduce the notation in which $u^{-}$ represents some element in $\{u,\bar{u}\}$ for any complex number $u$. This will be used throughout the rest of the paper. The first result about hypercontractivity was proved in Nelson \cite{Ne73}. Here we use a  formulation of this property taken from \cite{TT10}.
\begin{proposition}\label{hypercon}
Suppose $l,d\geq 1$, and a random variable $S$ has the following form\begin{equation}\label{form}S=\sum_{0\leq n_{1},\cdots,n_{l}\leq d}c_{n_{1},\cdots,n_{l}}\cdot g_{n_{1}}^{-}(\omega)\cdots g_{n_{l}}^{-}(\omega),\end{equation} where $c_{n_{1},\cdots,n_{l}}\in\mathbb{C}$, and the $(g_{n})_{0\leq n\leq d}$ are independent normalized complex Gaussians, then we have the estimate\[\big(\mathbb{E}|S|^{p}\big)^{\frac{1}{p}}\leq \sqrt{l+1}(p-1)^{\frac{l}{2}}\big(\mathbb{E}|S|^{2}\big)^{\frac{1}{2}},\] for all $p\geq 2$.
\end{proposition}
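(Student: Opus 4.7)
The plan is to reduce to Nelson's classical hypercontractive inequality for the Ornstein--Uhlenbeck semigroup on a real Gaussian space, applied chaos-by-chaos. First I would write each complex Gaussian $g_n = (X_n + \mathrm{i} Y_n)/\sqrt{2}$ with $X_n, Y_n$ independent real standard Gaussians, so that $S$ becomes a polynomial of total degree $l$ in the $2(d+1)$ independent real Gaussians $\{X_n, Y_n\}_{0\le n\le d}$.

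Next I would decompose $S$ into its Wiener chaos components. Any polynomial of degree $l$ in independent real standard Gaussians can be written as $S = \sum_{k=0}^{l} S_k$, where $S_k$ is the projection onto the $k$-th Wiener chaos (the closed span of products of Hermite polynomials of total degree $k$); this follows from the Hermite expansion $x^m = \sum_{j} a_{m,j} H_j(x)$, which only produces indices $j\le m$. The different chaoses are pairwise $L^2$-orthogonal, so
\begin{equation}
\|S\|_{L^2}^2 = \sum_{k=0}^{l} \|S_k\|_{L^2}^2. \nonumber
\end{equation}

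The main analytic input is Nelson's theorem: the Ornstein--Uhlenbeck semigroup $T_t$ on Gaussian space satisfies $\|T_t F\|_{L^p} \le \|F\|_{L^2}$ whenever $e^{2t} \ge p-1$. Since $T_t$ acts on the $k$-th chaos by multiplication by $e^{-kt}$, choosing $e^{2t} = p-1$ gives $\|S_k\|_{L^p} \le (p-1)^{k/2}\|S_k\|_{L^2}$ for every $k$. Since $k \le l$ and $p\ge 2$, this is bounded by $(p-1)^{l/2}\|S_k\|_{L^2}$.

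Finally, combining Minkowski's inequality, the chaos-by-chaos Nelson bound, and Cauchy--Schwarz over the $l+1$ chaos levels:
\begin{equation}
\|S\|_{L^p} \le \sum_{k=0}^{l} \|S_k\|_{L^p} \le (p-1)^{l/2}\sum_{k=0}^{l} \|S_k\|_{L^2} \le \sqrt{l+1}\,(p-1)^{l/2}\bigg(\sum_{k=0}^{l}\|S_k\|_{L^2}^2\bigg)^{1/2} = \sqrt{l+1}\,(p-1)^{l/2}\|S\|_{L^2}, \nonumber
\end{equation}
which is exactly the claimed inequality. No single step is hard once Nelson's theorem is available; the only thing to be careful about is the passage from complex to real Gaussians, and verifying that the monomials $g_{n_1}^{-}\cdots g_{n_l}^{-}$ are indeed polynomials of degree at most $l$ in the underlying real Gaussians (the conjugation does not raise the degree), so that the chaos decomposition truncates at level $l$ and produces the $\sqrt{l+1}$ factor.
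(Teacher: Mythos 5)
Your proof is correct and is essentially the route the paper takes: the paper performs exactly your reduction $g_{n}=\frac{1}{\sqrt{2}}(\gamma_{n}+\mathrm{i}\tilde{\gamma}_{n})$ to products of independent real Gaussians and then cites Proposition 2.4 of Thomann--Tzvetkov, whose content is precisely the chaos-by-chaos Nelson hypercontractivity bound you spell out. The only difference is that you make the underlying argument self-contained, writing out the Wiener chaos decomposition up to level $l$, the bound $\|S_{k}\|_{L^{p}}\leq(p-1)^{k/2}\|S_{k}\|_{L^{2}}$ on each chaos, and the Cauchy--Schwarz step over the $l+1$ levels that produces the factor $\sqrt{l+1}$, rather than quoting them.
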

\begin{proof} This is basically a restatement of Proposition 2.4 in \cite{TT10}. There the authors required $n_{j}\geq 1$ and $n_{1}\leq\cdots\leq n_{l}$, but an easy modification will immediately settle this. The only difference is that here we may have $g_{n_{j}}$ or $\bar{g}_{n_{j}}$, but if we write $g_{n}=\frac{1}{\sqrt{2}}(\gamma_{n}+\mathrm{i}\tilde{\gamma}_{n})$ where $\gamma_{n}$ and $\tilde{\gamma}_{n}$ are mutually independent normalized real Gaussians, then $\bar{g}_{n}=\frac{1}{\sqrt{2}}(\gamma_{n}-\mathrm{i}\tilde{\gamma}_{n})$. So $S$ is again written as a linear combination of products of independent normalized real Gaussians. Then the result follows in the same way as \cite{TT10}.
\end{proof}
Next we want to adapt the result in Proposition \ref{hypercon} to our specific case to yield a large deviation bound on appropriate multilinear expressions of Gaussians. Namely, we have the following
\begin{proposition}\label{gaussianest}
Let $N_{1}\geq\cdots \geq N_{l}\geq 2$ be dyadic numbers such that $N_{1}\geq 10^{3}N_{2}$. Assume for $n\geq 0$ and $4n+2\leq10 N_{1}^{2}$, we have independent normalized complex Gaussians $\{w_{n}\}$. Also let $\varrho$ be any integer, and $\delta_{n_{1},\cdots,n_{l}}$ be arbitrary complex numbers with absolute value $\leq 1$. Define\begin{equation}\label{Xi}\Xi=\bigg\{(n_{1},\cdots,n_{l}): n_{j}\geq 0, \,\frac{1}{10}\leq\frac{4n_{j}+2}{N_{j}^{2}}\leq 10\,(1\leq j\leq l),\,\sum_{j=1}^{l}\epsilon_{j}(4n_{j}+2)=\varrho\bigg\}\end{equation} with $\epsilon_{j}=\pm 1$, then we have\begin{equation}\label{gau}\mathbb{P}\bigg(\bigg\{\bigg|\sum_{(n_{1},\cdots,n_{l})\in\Xi}\delta_{n_{1},\cdots,n_{l}}w_{n_{1}}^{-}(\omega)\cdots w_{n_{l}}^{-}(\omega)\bigg|>K\prod_{j=2}^{l}N_{j}\bigg\}\bigg)\leq c_{1}e^{-c_{2}K^{c_{3}}}.\end{equation} Here all the constants depends only on $l$.
\end{proposition}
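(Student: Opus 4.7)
The plan is to reduce the tail estimate to a second-moment computation via Nelson's hypercontractivity (Proposition \ref{hypercon}), and then close by Markov's inequality with an optimal choice of moment.

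First I would compute $\|S\|_{L^2(\Omega)}$ where $S$ denotes the random sum on the left of (\ref{gau}). Expanding $\mathbb{E}|S|^2$ and applying Wick's formula for products of independent complex Gaussians, only those paired contributions survive in which the multiset (with conjugation types) of indices on the $S$ side matches that of the $\bar{S}$ side. Counting the pairings in the generic case (all $n_j$ distinct, one $w$ paired with one $\bar{w}$) and handling possible coincidences among indices in the shells with $N_i = N_j$ by bounded Gaussian moments yields the orthogonality-type bound
\[
\mathbb{E}|S|^2 \;\leq\; C_l \sum_{(n_1,\dots,n_l)\in\Xi} |\delta_{n_1,\dots,n_l}|^2 \;\leq\; C_l\,\#\Xi.
\]

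Next I would estimate $\#\Xi$. Each coordinate $n_j$ ranges in an interval of size $\sim N_j^2$. The linear constraint $\sum_{j=1}^l \epsilon_j(4n_j+2)=\varrho$ has nonzero coefficient $\pm 4$ in $n_1$, so $n_1$ is determined by $(n_2,\ldots,n_l)$ (and may or may not satisfy the shell condition for $N_1$). Hence
\[
\#\Xi \;\lesssim_l\; \prod_{j=2}^l N_j^2,
\qquad\text{so}\qquad \|S\|_{L^2} \;\lesssim_l\; \prod_{j=2}^l N_j.
\]

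Third, since $S$ is a homogeneous polynomial of degree $l$ in complex Gaussians of the form required by Proposition \ref{hypercon}, we obtain
\[
\|S\|_{L^p(\Omega)} \;\leq\; \sqrt{l+1}\,(p-1)^{l/2}\,\|S\|_{L^2(\Omega)} \;\leq\; C_l\,(p-1)^{l/2}\prod_{j=2}^l N_j
\]
for every $p\ge 2$. Markov's inequality gives
\[
\mathbb{P}\!\left(|S|>K\prod_{j=2}^l N_j\right) \;\leq\; \left(\frac{C_l(p-1)^{l/2}}{K}\right)^{\!p}.
\]
For $K$ larger than a constant depending on $l$, I choose $p-1=(K/(2C_l))^{2/l}$ so that the bracketed ratio is at most $1/2$, yielding $2^{-p}\le c_1 e^{-c_2 K^{2/l}}$; for $K$ in the complementary bounded range the estimate is trivial since the probability is at most $1$. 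This gives (\ref{gau}) with $c_3=2/l$.

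The one step that requires some care is the $L^2$ calculation: when several of the dyadic shells $N_j$ coincide, the indices $n_j$ in the product may collide, introducing higher Gaussian moments and extra pairings. The combinatorics stays bounded by a constant $C_l$ because the number of such pairings and the relevant Gaussian moments depend only on $l$, so one still recovers the clean bound $\mathbb{E}|S|^2 \lesssim_l \#\Xi$; the rest of the proof is routine.
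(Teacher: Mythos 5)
Your reduction of the tail bound to a second moment estimate via Proposition \ref{hypercon}, followed by Markov's inequality and the choice $p-1\sim K^{2/l}$, is exactly the paper's argument and is fine. The gap is in the $L^{2}$ computation. In the Wick expansion of $\mathbb{E}|S|^{2}$ it is \emph{not} true that only those contributions survive in which the index multiset of the $S$-side matches that of the $\bar S$-side: since $\mathbb{E}[w_{n}\bar w_{n}]=1$, pairings \emph{within a single tuple} survive whenever an index repeats inside it, which can happen as soon as two of the shells coincide (the hypothesis $N_{1}\geq 10^{3}N_{2}$ only separates the first shell from the rest). Consequently your asserted bound $\mathbb{E}|S|^{2}\leq C_{l}\sum_{\Xi}|\delta|^{2}$ is false. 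Concretely, take $l=3$, $N_{2}=N_{3}=N$, conjugation pattern $w_{n_{1}}^{-}w_{n_{2}}\bar w_{n_{3}}$, and $\varrho$ chosen with $\epsilon_{1}\varrho$ in the $N_{1}$-shell, so that $n_{2}=n_{3}$ forces a single admissible value $n_{1}=a$; setting $\delta_{a,n,n}=1$ for the $\sim N^{2}$ admissible $n$ and $\delta=0$ otherwise gives $S=w_{a}^{-}\sum_{n}|w_{n}|^{2}$, hence $\mathbb{E}|S|^{2}\geq |\mathbb{E}\sum_{n}|w_{n}|^{2}|^{2}\sim N^{4}$, while $\sum|\delta|^{2}\sim N^{2}$. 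So the off-diagonal correlations are not an $O_{l}(1)$ perturbation absorbable into ``bounded Gaussian moments,'' and your fallback claim $\mathbb{E}|S|^{2}\lesssim_{l}\#\Xi$ is precisely the nontrivial combinatorial content of the proposition, not something your argument establishes; the bound $\#\Xi\lesssim\prod_{j\geq2}N_{j}^{2}$ by itself does not give the needed moment bound.

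What is actually required, and what the paper proves, is a count over \emph{pairs}: the number of $(n_{1},\dots,n_{l},m_{1},\dots,m_{l})\in\Xi\times\Xi$ for which $\mathbb{E}\prod_{j}w_{n_{j}}^{-}\overline{w_{m_{j}}^{-}}\neq0$ is $\lesssim_{l}\prod_{j\geq2}N_{j}^{2}$. The proof of this uses that every index occurring in the $2l$-tuple must occur at least twice, that $N_{1}\geq10^{3}N_{2}$ forces $n_{1}=m_{1}$ (an assumption your $L^{2}$ step never invokes, which is itself a warning sign), the linear relation to recover $n_{1},m_{1}$, and a cascade argument showing that the $i$-th largest distinct index among $(n_{2},\dots,n_{l},m_{2},\dots,m_{l})$ can take at most $O(N_{i+1}^{2})$ values. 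Note that the example above saturates $\prod_{j\geq2}N_{j}^{2}=N^{4}$, so the target second-moment bound is sharp and genuinely lives at the level of $\Xi\times\Xi$ rather than of $\sum|\delta|^{2}$ or $\#\Xi$; to repair your proof you would need to supply this pair-counting argument.
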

\begin{proof}
We denote the sum on the left side of (\ref{gau}) by $S$. Using Proposition \ref{hypercon}, we can get \[\big(\mathbb{E}|S|^{p}\big)^{\frac{1}{p}}\leq \sqrt{l+1}(p-1)^{\frac{l}{2}}A,\] where we denote $A=\big(\mathbb{E}|S|^{2}\big)^{\frac{1}{2}}$. By Markov's inequality, we in particular have\[\mathbb{P}(|S|>KA)\leq (KA)^{-p}\cdot\mathbb{E}|S|^{p}\leq K^{-p}(l+1)^{\frac{p}{2}}(p-1)^{\frac{lp}{2}},\] for all $p\geq 2$. If $K\geq 2\sqrt{l+1}$, we may choose $p=1+K^{\frac{2}{l}}2^{-\frac{2}{l}}(l+1)^{-\frac{1}{l}}\geq 2$ in the above inequality to obtain \[\mathbb{P}(|S|>KA)\leq 2^{-p}\leq c_{1}e^{-c_{2}K^{c_{3}}}.\] By choosing the constants appropriately, we can guarantee that this also hold for $K<2\sqrt{l+1}$. Now what remains is to prove that $A\lesssim \prod_{j=2}^{l}N_{j}$, or equivalently \[\mathbb{E}|S|^{2}\lesssim\prod_{j=2}^{l}N_{j}^{2}.\] Now we expand the square to get\[\mathbb{E}|S|^{2}=\sum\delta_{n_{1},\cdots,n_{l}}\bar{\delta}_{m_{1},\cdots,m_{l}}\Delta_{n_{1},\cdots,n_{l},m_{1},\cdots,m_{l}},\] where the sum is taken over all $(n_{1},\cdots,n_{l},m_{1},\cdots,m_{l})\in\Xi\times\Xi$, and\[\Delta_{n_{1},\cdots,n_{l},m_{1},\cdots,m_{l}}=\mathbb{E}\bigg(\prod_{j=1}^{l}w_{n_{j}}^{-}w_{m_{j}}^{-}\bigg).\] Since each of the $\delta$'s and $\Delta$'s has absolute value $\lesssim 1$ (depending on $l$) in any possible case, we will be done once we establish the following\begin{equation}\label{comb}\#\bigg\{(n_{1},\cdots,n_{l},m_{1},\cdots,m_{l})\in\Xi\times\Xi:\Delta_{n_{1},\cdots,n_{l},m_{1},\cdots,m_{l}}\neq 0\bigg\}\lesssim \prod_{j=2}^{l}N_{j}^{2}.\end{equation}

The crucial observation is that, due to the independence assumption, if the expectation $\Delta$ is nonzero, then any integer that appears in $(n_{1},\cdots,n_{l},m_{1},\cdots,m_{l})$, must appear at least twice. Next, due to our assumption $N_{1}\geq 10^{3}N_{2}$, we know $n_{1}= m_{1}$, and any integer that appears in $(n_{2},\cdots,n_{l},m_{2},\cdots,m_{l})$ must appear at least twice. If we permute all the different integers appearing in this $(2l-2)$-tuple as $\sigma_{1}>\sigma_{2}>\cdots>\sigma_{r}$, then with $r$ and all $\sigma_{i}$ fixed, we have at most $(2l-2)^{2l-2}$ choices for the $(2l-2)$-tuple; also due to the linear relation enjoyed by both $(n_{1},\cdots,n_{l})$ and $(m_{1},\cdots,m_{l})$, the $(2l-2)$-tuple will uniquely determine $n_{1}$ and $m_{1}$. Thus we only need to show for each possible $1\leq r\leq 2l$, there are $\lesssim\prod_{j=2}^{l}N_{j}^{2}$ choices for $(\sigma_{1},\cdots,\sigma_{r})$. Now for each $1\leq i\leq r$, since each $\sigma_{j}(1\leq j\leq i)$ appear in the $(2l-2)$-tuple at least twice (and different $\sigma_{j}$ cannot appear at the same place), here must exist $1\leq j_{1}\leq i<i+1\leq j_{2}$ such that $\sigma_{j_{1}}\in\{n_{j_{2}},m_{j_{2}}\}$. This implies \[4\sigma_{i}+2\leq 4\sigma_{j_{1}}+2\lesssim N_{j_{2}}^{2}\lesssim N_{i+1}^{2},\] so for each $1\leq i\leq r$, there is at most $N_{i+1}^{2}$ choices for $\sigma_{i}$, and necessarily $1\leq r\leq l-1$. Therefore, for each $r\leq l-1$, we have at most\[\lesssim\prod_{i=1}^{r}N_{i+1}^{2}\lesssim\prod_{j=2}^{l}N_{j}^{2}\] choices for $(\sigma_{1},\cdots,\sigma_{r})$. This completes the proof.
\end{proof}
\begin{proposition}\label{longest}
Suppose $p\geq 3$ is an odd integer. We choose $\sigma$ and $b$ so that $0<\sigma<1$ is sufficiently close to $1$ depending on $p$, and $1>b>\frac{1}{2}$ is sufficiently close to $\frac{1}{2}$ depending on $\sigma$ and $p$. Let $T$ be small enough depending on $b,\sigma$ and $p$. Then we can find a set $\Omega_{T}\subset\Omega$ and a positive number $\theta$ that only depends on $\sigma,b$ and $T$, so that $\mathbb{P}(\Omega_{T})\leq c_{1}e^{-c_{2}T^{-c_{3}}}$, and that the following holds: for any $t_{0}\in\mathbb{R}$ and $\omega\in\Omega_{T}^{c}$, if for each $1\leq j\leq p$, a function $u_{j}$ on $[-T,T]\times\mathbb{R}^{2}$ is given by either \begin{equation}\label{pro}u_{j}=e^{-\mathrm{i}(t+t_{0})\mathbf{H}}f(\omega),\end{equation} or
\begin{equation}\label{det}\|u_{j}\|_{\mathcal{X}^{\sigma,b,T}}\lesssim 1,\end{equation}then we have \begin{equation}\label{mainpro}\|u_{1}^{-}\cdots u_{p}^{-}\|_{\mathcal{X}^{\sigma,b-1,T}}\lesssim T^{\theta}.\end{equation} Here all the constants will depend on $\sigma,b$ and $p$.
\end{proposition}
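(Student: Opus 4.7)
My approach is by duality, Hermite expansion via Lemma \ref{represent}, Littlewood--Paley decomposition, and hypercontractivity for the random factors. I will first reduce $\|u_{1}^{-}\cdots u_{p}^{-}\|_{\mathcal{X}^{\sigma,b-1,T}}$ to a multilinear dual pairing $\iint u_{1}^{-}\cdots u_{p}^{-}\,\bar{\phi}\,dt\,dx$, with $\phi$ ranging over a local version of $\mathcal{X}^{-\sigma,1-b}$; a small positive power $T^{\theta'}$ will be extracted at the outset using a $\psi(T^{-1}t)$ cutoff and Lemma \ref{linear00}. Each deterministic $u_{j}$ is then expanded by Lemma \ref{represent} as $\int\phi_{j}(\lambda)e^{\mathrm{i}\lambda t}\sum_{k_{j}}a_{\lambda}^{j}(k_{j})e^{-\mathrm{i}(4k_{j}+2)t}e_{k_{j}}(x)\,d\lambda$ with $\|\phi_{j}\|_{L^{1}}\lesssim 1$ and $\sum(4k_{j}+2)^{\sigma}|a_{\lambda}^{j}(k_{j})|^{2}=1$, each random $u_{j}$ is its explicit series $\sum(4k_{j}+2)^{-1/2}g_{k_{j}}(\omega)e^{-\mathrm{i}(4k_{j}+2)(t+t_{0})}e_{k_{j}}(x)$, and $\phi$ is expanded similarly with coefficients $b_{\mu}(m)$ satisfying $\sum(4m+2)^{-\sigma}|b_{\mu}(m)|^{2}=1$. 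The spatial integration produces the multilinear eigenfunction integral $\int e_{k_{1}}\cdots e_{k_{p}}e_{m}\,dx$ governed by Lemma \ref{multil}, while the temporal integration gives modulation constraints weighted by $\langle\cdot\rangle^{b-1}$ and $\langle\cdot\rangle^{-(1-b)}$.

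Next, I will Littlewood--Paley decompose each factor and, by symmetry, order input frequencies $N_{1}\geq N_{2}\geq\cdots\geq N_{p}$. In the non-resonant regime $N_{1}\geq 10^{3}N_{2}$ the output frequency $M$ is forced to satisfy $M\sim N_{1}$, and the rapid decay (\ref{lgdecay}) from Lemma \ref{multil} crushes the sum with room to spare. The genuinely difficult regime is $N_{1}\sim N_{2}$, where only the logarithmic bound (\ref{smesti}) is available; here the gains must come from the Sobolev weight $(4m+2)^{\sigma}/\prod(4k_{j}+2)^{\sigma}$, which is at most $1$ because $\sigma<1$ and $4m+2\lesssim\max(4k_{j}+2)$, and from cancellations among the random Gaussian factors.

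The heart of the argument is exploiting the random cancellation. For each subset $S\subset\{1,\ldots,p\}$ of indices with $u_{j}$ of form (\ref{pro}), I will freeze the deterministic input coefficients ($j\notin S$) and the test-function coefficients, then apply Proposition \ref{gaussianest} to the Gaussian multilinear sum in $(k_{j})_{j\in S}$, with the $\delta_{n_{1},\ldots,n_{l}}$ encoding the frozen data and eigenfunction integrals. This replaces the naive $\ell^{\infty}$ bound $\prod_{j\in S}N_{j}$ by $K\prod_{j\in S,\,j\neq j_{*}}N_{j}$, where $j_{*}\in S$ carries the largest frequency in $S$; the saved factor $N_{j_{*}}$ is precisely what is needed to make the dyadic sum convergent once combined with (\ref{smesti}) and the Sobolev weight. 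For the frozen factors I will apply Cauchy--Schwartz via the $\ell^{2}$-normalizations of $a_{\lambda}^{j}$ and $b_{\mu}$. The exceptional set $\Omega_{T}$ is then the union, over dyadic tuples $(N_{1},\ldots,N_{p},M)$, subsets $S$, and modulation shells, of the failure events in Proposition \ref{gaussianest} with threshold $K\sim N_{1}^{\epsilon_{1}}T^{-\epsilon_{2}}$ chosen so that summing the Gaussian tail bounds yields $\mathbb{P}(\Omega_{T})\leq c_{1}e^{-c_{2}T^{-c_{3}}}$.

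The main obstacle will be the resonant regime $N_{1}\sim\cdots\sim N_{p}$ in which all or nearly all factors are deterministic: here no randomness gain is available, so the entire nonlinear smoothing must come through Lemma \ref{multil}. Absorbing the logarithmic losses in (\ref{smesti}) against the accumulated Sobolev weights forces $\sigma$ strictly less than but close to $1$, while the modulation integration requires $b>1/2$ close to $1/2$ in order to apply Cauchy--Schwartz with the $\langle\cdot\rangle^{b-1}$ weight against a square-summable profile; these are precisely the near-optimality conditions imposed in the statement.
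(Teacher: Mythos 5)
Your overall framework (duality, Lemma \ref{represent}, Littlewood--Paley, hypercontractivity) points in the right direction, but the case analysis has two genuine errors, and they sit exactly where the real difficulty of the proposition lies. First, in the separated regime $N_{1}\geq 10^{3}N_{2}$ you claim the rapid decay (\ref{lgdecay}) ``crushes the sum''. It cannot: in the dual pairing the output eigenfunction $e_{m}$ enters the $(p+1)$-fold integral $\int e_{k_{1}}\cdots e_{k_{p}}e_{m}$, and, as you yourself note, the output frequency satisfies $M\sim N_{1}$. Hence the two largest eigenvalues in that integral are comparable, the hypothesis $\nu_{1}\gtrsim\nu_{2}^{1+\epsilon}$ of (\ref{lgdecay}) fails, and only the bound (\ref{smesti}), $\lesssim\nu_{(0)}^{-1/2}\nu_{(2)}^{-1/4}\log\nu_{(0)}$, is available. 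When the dominant factor is deterministic this regime can still be closed by soft means (it carries $\sigma$ derivatives in $\mathcal{X}^{\sigma,b}$, so the loss $N_{1}^{\sigma}$ from the output weight is absorbed by Strichartz/Sobolev estimates, which is how the paper treats it). But when the dominant factor is the random linear evolution, it only supports regularity below $1/2$ while the output weight costs $N_{1}^{\sigma}$ with $\sigma$ close to $1$; there the gain $N_{1}^{-1}$ from (\ref{smesti}) must be combined with the Gaussian cancellation of Proposition \ref{gaussianest} over the constrained frequency set, together with a modulation (near/far) splitting to handle the delta-function time-frequency structure of $e^{-\mathrm{i}(t+t_{0})\mathbf{H}}f(\omega)$, for which Lemma \ref{represent} does not apply directly. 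This is the core case of the proposition, and your proposal has no valid argument for it.

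Second, in the regime you call ``genuinely difficult'' ($N_{1}\sim N_{2}$) you invoke Proposition \ref{gaussianest} with $j_{*}$ the largest random frequency, but that proposition is stated and proved only under the separation hypothesis $N_{1}\geq 10^{3}N_{2}$ among the frequencies entering the Gaussian sum: its second-moment count hinges on the forced pairing $n_{1}=m_{1}$, which is exactly what separation provides. Without it the claimed replacement of $\prod_{j\in S}N_{j}$ by $K\prod_{j\in S,\,j\neq j_{*}}N_{j}$ is unjustified. In fact no Gaussian multilinear estimate is needed in the comparable regime: since two factors carry frequencies $\sim N_{1}$, one can split the output weight as $N_{1}^{\sigma}\lesssim N_{1}^{(1+\sigma)/4}N_{2}^{(1+\sigma)/4}$ and place $(1+\sigma)/4<1/2$ derivatives on each of them (deterministic factors via their $\mathcal{X}^{\sigma,b}$ norm, random factors via Corollary \ref{prob}, which is exactly the paper's route). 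Similarly, the all-deterministic resonant case you single out as the main obstacle is in fact harmless, since the inputs collectively carry $p\sigma$ derivatives. So your map of where randomness is indispensable is inverted, and as written the argument does not close in the one case that actually requires it.
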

\begin{proof}
 In what follows, if an estimate holds for $\omega$ outside a set with measure $\epsilon$, we simply say it holds ``with exceptional probability $\epsilon$''. We will use various exponents $q_{j}$, and each of them will remain the \emph{same} throughout the proof. First we can use Lemma \ref{linear00} to estimate\[\|u_{1}^{-}\cdots u_{p}^{-}\|_{\mathcal{X}^{\sigma,b-1,T}}\lesssim T^{2b-1}\|u_{1}^{-}\cdots u_{p}^{-}\|_{\mathcal{X}^{\sigma,3b-2,T}},\] since $-\frac{1}{2}<b-1<3b-2<\frac{1}{2}$. Thus we only need to prove\begin{equation}\label{firstofall}\|u_{1}^{-}\cdots u_{p}^{-}\|_{\mathcal{X}^{\sigma,3b-2,T}}\lesssim T^{\frac{1}{2}-b},\end{equation} with exceptional probability $\leq c_{1}e^{-c_{2}T^{-c_{3}}}$. Recall the Littlewood-Paley projections (\ref{litp}), we have\begin{equation}u=\sum_{N\geq 2}u_{N},\end{equation} where for simplicity we write $u_{N}=\Delta_{N}u$. Thus we only need to estimate the terms (note $(u_{N})^{-}=(u^{-})_{N}$ since the Littlewood-Paley projectors are real)\begin{equation}\nonumber\prod_{j=1}^{p}(u_{j})_{N_{j}}^{-},\end{equation} where we have fixed a choice between $u_{j}$ and $\bar{u}_{j}$, and between (\ref{pro}) and (\ref{det}), for each $u_{j}$. Define \[A=\{1\leq j\leq p:u_{j}\,\,\mathrm{given}\,\,\mathrm{by}\,\,(\ref{pro})\},\] and \[B=\{1\leq j\leq p:u_{j}\,\,\mathrm{given}\,\,\mathrm{by}\,\,(\ref{det})\}.\] Let \begin{equation}\mathcal{A}=\bigg\{(N_{1},\cdots,N_{p}):\exists j\in B,\,\,N_{j}>10^{3}\sum_{i\neq j}N_{i}\bigg\}.\end{equation} We first consider the sum of terms with $(N_{1},\cdots, N_{p})\in \mathcal{A}$, and rewrite it as\begin{equation}\sum_{j\in B}\sum_{(N_{i})_{i\neq j}}\prod_{i\neq j}(u_{i})_{N_{i}}^{-}\cdot\bigg(\sum_{N_{j}>10^{3}\sum_{i\neq j}N_{i}}(u_{j})_{N_{j}}^{-}\bigg).\end{equation} To bound this expression we only need to consider a fixed $j_{0}\in B$, and without loss of generality, we may assume $j_{0}=p$. For each $(N_{1},\cdots,N_{p-1})$ if we write \begin{equation}u_{p}^{hi}= \sum_{N_{p}>10^{3}\sum_{i=1}^{p-1}N_{i}}(u_{p})_{N_{p}},\end{equation} then we only need to prove\begin{equation}\label{subtogether}\mathfrak{S}:=\|(u_{1})_{N_{1}}^{-}\cdots(u_{p-1})_{N_{p-1}}^{-}(u_{p}^{hi})^{-}\|_{\mathcal{X}^{\sigma,3b-2,T}}\lesssim T^{\frac{1}{2}-b}(\max_{j<p}N_{j})^{-\theta},\end{equation} for some $\theta>0$, with exceptional probability $\leq c_{1}e^{-c_{2}T^{-c_{3}}(\max_{j<p}N_{j})^{c_{4}}}$ (note that when we take the sum over all $(N_{1},\cdots, N_{p-1})$, we still get an expression $\leq c_{1}e^{-c_{2}T^{-c_{3}}}$). 

To prove (\ref{subtogether}), we use Propositions \ref{sobolev} and \ref{strichartz} to estimate (for simplicity, we shall omit the spacetime domain $[-T,T]\times\mathbb{R}^{2}$ in the following estimates, but one should keep in mind that we are working on a very short time interval)\setlength\arraycolsep{2pt}
\begin{eqnarray}
\label{beingone}\mathfrak{S}
&\lesssim&\|(u_{1})_{N_{1}}^{-}\cdots(u_{p-1})_{N_{p-1}}^{-}(u_{p}^{hi})^{-}\|_{L_{t}^{q_{1}}\mathcal{W}_{x}^{\sigma,q_{1}}}\\
&\lesssim &\|(u_{p}^{hi})^{-}\|_{L_{t}^{4}\mathcal{W}_{x}^{\sigma,4}}\prod_{j=1}^{p-1}\|(u_{j})_{N_{j}}^{-}\|_{L_{t,x}^{q_{2}}}\nonumber\\
&+&\sum_{j=1}^{p-1}\|(u_{p}^{hi})^{-}\|_{L_{t,x}^{4}}\|(u_{j})_{N_{j}}^{-}\|_{L_{t}^{q_{2}}\mathcal{W}_{x}^{\sigma,q_{2}}}\prod_{j\neq i<p}\|(u_{i})_{N_{i}}^{-}\|_{L_{t,x}^{q_{2}}}\nonumber\\
&\lesssim & \|(u_{p}^{hi})^{-}\|_{L_{t}^{4}\mathcal{W}_{x}^{\sigma,4}}\prod_{j=1}^{p-1}\|(u_{j})_{N_{j}}^{-}\|_{L_{t,x}^{q_{2}}}\nonumber\\
\label{beingtwo}&+&\sum_{j=1}^{p-1}N_{j}^{\sigma}\|(u_{p}^{hi})^{-}\|_{L_{t,x}^{4}}\prod_{i=1}^{p-1}\|(u_{i})_{N_{i}}^{-}\|_{L_{t,x}^{q_{2}}}\\ \label{beingthree}&\lesssim & \|u_{p}^{-}\|_{L_{t}^{4}\mathcal{W}_{x}^{\sigma,4}}\prod_{j=1}^{p-1}\|(u_{j})_{N_{j}}^{-}\|_{L_{t,x}^{q_{2}}}
\\
\label{beingfour}&\lesssim & \prod_{j=2}^{p}\|(u_{j})_{N_{j}}^{-}\|_{L_{t,x}^{q_{2}}},
\end{eqnarray} where in (\ref{beingtwo}) and (\ref{beingthree}) we have used Corollary \ref{sobbb} (recall the definition of $u_{p}^{hi}$). In (\ref{beingfour}) we have used Proposition \ref{strichartz} and the asumption that $p\in B$. For the parameters, we choose $q_{1}>\frac{4}{3}$ and sufficiently close to $\frac{4}{3}$ depending on $p$, and $\frac{p-1}{q_{2}}=\frac{1}{q_{1}}-\frac{1}{4}$, and check that (\ref{beingone}) indeed hold, provided $b$ is sufficiently close to $\frac{1}{2}$, depending on $q_{1}$ (see Proposition \ref{strichartz}, with $b$ there replaced by $3b-1$). 

Now we proceed to analyze the expression (\ref{beingfour}). Choose $1\leq j\leq p-1$ so that $N_{j}=\max_{i<p}N_{i}$. If $j\in B$, then from Corollary \ref{sobbb} and Proposition \ref{strichartz} we have\begin{equation}\label{determin}\|(u_{j})_{N_{j}}^{-}\|_{L_{t,x}^{q_{2}}}\lesssim N_{j}^{-\epsilon}\|u_{j}\|_{L_{t}^{q_{2}}\mathcal{W}_{x}^{\epsilon,q_{2}}}\lesssim N_{j}^{-\epsilon}\|u_{j}\|_{\mathcal{X}^{\sigma,b,T}}\lesssim N_{j}^{-\epsilon},\end{equation} provided $\sigma-\epsilon>1-\frac{4}{q_{2}}$ (note $q_{2}>4$ from our choice of exponents above). This can be achieved if $\epsilon$ is small enough depending on $q_{2}$, and $\sigma$ is sufficiently close to $1$ depending on $q_{2}$ and $\epsilon$. If instead $j\in A$, then from Corollary \ref{sobbb} we have \begin{equation}\label{probabli}\|(u_{j})_{N_{j}}^{-}\|_{L_{t,x}^{q_{2}}}\lesssim N_{j}^{-\epsilon}\|u_{j}\|_{L_{t}^{q_{2}}\mathcal{W}_{x}^{\epsilon,q_{2}}}=N_{j}^{-\epsilon}\|e^{-\mathrm{i}(t+t_{0})\mathbf{H}}f(\omega)\|_{L_{t}^{q_{2}}\mathcal{W}_{x}^{\epsilon,q_{2}}}.\end{equation} The norm in the last expression equals the $L_{t}^{q_{2}}\mathcal{W}_{x}^{\epsilon,q_{2}}$ norm of $e^{-\mathrm{i}t\mathbf{H}}f(\omega)$ on the interval $[t_{0}-T,t_{0}+T]$. Since $T<1$, we may expand this interval to an interval with length $2\pi$. Since $e^{-\mathrm{i}t\mathbf{H}}f(\omega)$ has period $2\pi$ in $T$, we may replace the enlarged norm by the norm on $[-\pi,\pi]$. Then we could use Corollary \ref{prob} to bound\begin{equation}\label{alltime}N_{j}^{-\epsilon}\|e^{-\mathrm{i}(t+t_{0})\mathbf{H}}f(\omega)\|_{L_{t}^{q_{2}}\mathcal{W}_{x}^{\epsilon,q_{2}}}\lesssim T^{\frac{1}{10p}(\frac{1}{2}-b)}N_{j}^{-\frac{\epsilon}{2}}\end{equation} for all $t_{0}$, with exceptional probability $\leq c_{1}e^{-c_{2}T^{-c_{3}}N_{j}^{c_{4}}}$, provided $0<\epsilon<\frac{2}{q_{2}}$. Therefore in each case we have\begin{equation}\|(u_{j})_{N_{j}}^{-}\|_{L_{t,x}^{q_{2}}}\lesssim T^{\frac{1}{10p}(\frac{1}{2}-b)}N_{j}^{-\theta},\end{equation} with exceptional probability $\leq c_{1}e^{-c_{2}T^{-c_{3}}N_{j}^{c_{4}}}$, for some $\theta>0$.

Then we treat the terms with $i\neq j$. If $i\in B$, we can use Proposition \ref{strichartz} to bound $\|(u_{i})_{N_{i}}^{-}\|_{L_{t,x}^{q_{2}}}\lesssim 1$; if $i\in A$, we can use Corollary \ref{prob} to bound $\|(u_{i})_{N_{i}}^{-}\|_{L_{t,x}^{q_{2}}}\lesssim T^{\frac{1}{10p}(\frac{1}{2}-b)}N_{j}^{\frac{\theta}{10p}}$ for all $t_{0}$, with exceptional probability $\leq c_{1}e^{-c_{2}T^{-c_{3}}N_{j}^{c_{4}}}$. Putting these together, we have shown\begin{equation}\label{strong}(\ref{beingfour})\lesssim T^{\frac{1}{2}-b}(\max_{j<p}N_{j})^{-\theta}\end{equation} for some $\theta>0$, with exceptional probability $\leq c_{1}e^{-c_{2}T^{-c_{3}}(\max_{j< p}N_{j})^{c_{4}}}$. This takes care of the sum of terms with $(N_{1},\cdots, N_{p})\in \mathcal{A}$.

For $(N_{1},\cdots, N_{p})\not\in \mathcal{A}$, we are going to prove\begin{equation}\label{separate}J=\|v_{1}^{-}\cdots v_{p}^{-}\|_{\mathcal{X}^{\sigma, 3b-2,T}}\lesssim T^{\frac{1}{2}-b}(\max_{j\geq 1}N_{j})^{-\theta},\end{equation} where $v_{j}=(u_{j})_{N_{j}}$, with exceptional probability $\leq  c_{1}e^{-c_{2}T^{-c_{3}}(\max_{j\geq 1}N_{j})^{c_{4}}}$. This, together with the analysis above, clearly implies (\ref{mainpro}). Now without loss of generality, assume $N_{1}=\max_{j\geq 1}N_{j}$. If $1\in B$, then we have $N_{1}\sim \max_{j\geq 2}N_{j}$. By switching the role of $1$ and $p$ in the argument above and replacing $u_{1}^{hi}$ by $v_{1}$ (note $v_{1}$ also satisfy the estimates about $u_{1}^{hi}$ that we would use), we can prove (\ref{strong}) with the role of $1$ and $p$ switched. Since $N_{1}\sim\max_{j\geq 2}N_{j}$, this also proves (\ref{separate}).

Now we assume that $N_{1}=\max_{j\geq 1}N_{j}$ and $1\in A$. If $N_{1}\lesssim N_{j_{0}}^{\frac{1+\sigma}{3\sigma-1}}$ (note this exponent is $>1$) for some $j_{0}\geq 2$, then we may assume $j_{0}=2$. Now use the same arguments as in (\ref{beingone}) (but with different exponents), we have \setlength\arraycolsep{2pt}
\begin{eqnarray}
\label{phone}J
&\lesssim&\|v_{1}^{-}v_{2}^{-}\cdots v_{p}^{-}\|_{L_{t}^{q_{1}}\mathcal{W}_{x}^{\sigma,q_{1}}}\\
&\lesssim &(\|v_{1}^{-}\|_{L_{t}^{4}\mathcal{W}_{x}^{\sigma,4}}\|v_{2}^{-}\|_{L_{t,x}^{4}}+\|v_{1}^{-}\|_{L_{t,x}^{2}}\|v_{2}^{-}\|_{L_{t}^{4}\mathcal{W}_{x}^{\sigma,4}})\prod_{j=3}^{p}\|v_{j}^{-}\|_{L_{t,x}^{q_{4}}}\nonumber\\
&+&\sum_{j=3}^{p}\|v_{1}^{-}\|_{L_{t,x}^{4}}\|v_{2}^{-}\|_{L_{t,x}^{4}}\|v_{i}^{-}\|_{L_{t}^{q_{4}}\mathcal{W}_{x}^{\sigma,q_{4}}}\prod_{3\leq i\neq j}\|v_{j}^{-}\|_{L_{t,x}^{q_{4}}}\nonumber\\
\label{phtwo}&\lesssim &\bigg(\sum_{j=1}^{p}N_{j}^{\sigma}\bigg)\|v_{1}^{-}\|_{L_{t,x}^{4}}\|v_{2}^{-}\|_{L_{t,x}^{4}}\prod_{j=3}^{p}\|v_{j}^{-}\|_{L_{t,x}^{q_{4}}}\\
\label{phthree} &\lesssim&N_{1}^{\frac{1+\sigma}{4}}N_{2}^{\frac{1+\sigma}{4}}\|v_{1}^{-}\|_{L_{t,x}^{4}}\|v_{2}^{-}\|_{L_{t,x}^{4}}\prod_{j=3}^{p}\|v_{j}^{-}\|_{L_{t,x}^{q_{4}}}\\
\label{phfour}&\lesssim& \|v_{1}^{-}\|_{L_{t}^{4}\mathcal{W}_{x}^{\frac{1+\sigma}{4},4}}\|v_{2}^{-}\|_{L_{t}^{4}\mathcal{W}_{x}^{\frac{1+\sigma}{4},4}}\prod_{j=3}^{p}\|v_{j}^{-}\|_{L_{t,x}^{q_{4}}},
\end{eqnarray} where $\frac{p-2}{q_{4}}=\frac{1}{q_{1}}-\frac{1}{2}$ and $q_{4}>4$. Here in (\ref{phtwo}) and (\ref{phfour}) we have used Corollary \ref{sobbb} and the fact that $v_{j}=(u_{j})_{N_{j}}$, while in (\ref{phthree}) we have used $N_{j}\lesssim N_{1}\lesssim N_{2}^{\frac{1+\sigma}{3\sigma-1}}$ for all $j$.

Now we analyze the expression (\ref{phfour}). If $2\in B$, then by Corollary \ref{sobbb} and Proposition \ref{strichartz} we have (note $N_{1}\lesssim N_{2}^{2}$ when $\sigma>\frac{3}{5}$)\begin{equation}\|v_{2}^{-}\|_{L_{t}^{4}\mathcal{W}_{x}^{\frac{1+\sigma}{4},4}}\lesssim N_{1}^{-\frac{1}{24}}\|v_{2}^{-}\|_{L_{t}^{6}\mathcal{W}_{x}^{\frac{2\sigma}{3},4}}\lesssim N_{1}^{-\frac{1}{24}}\|u_{2}\|_{\mathcal{X}^{\sigma,b,T}}\lesssim N_{1}^{-\frac{1}{24}},\end{equation} provided $\frac{2\sigma}{3}>\frac{1+\sigma}{4}+\frac{1}{12}$ and $\sigma>\frac{2\sigma}{3}+\frac{1}{6}$, which is true for $\sigma>\frac{4}{5}$. If $2\in A$ (which is the case for $1$), we can use the arguments from (\ref{probabli}) to (\ref{alltime}) to get\begin{equation}\|v_{2}^{-}\|_{L_{t}^{4}\mathcal{W}_{x}^{\frac{1+\sigma}{4},4}}\lesssim N_{1}^{-\frac{1-\sigma}{16}}\|u_{2}^{-}\|_{L_{t}^{4}\mathcal{W}_{x}^{\frac{3+\sigma}{8},4}}\lesssim T^{\frac{1}{10p}(\frac{1}{2}-b)}N_{1}^{-\frac{1-\sigma}{32}}\end{equation} for all $t_{0}$, with exceptional probability $\leq c_{1}e^{-c_{2}T^{-c_{3}}N_{1}^{c_{4}}}$, thanks to Corollary \ref{prob}, and the hypothesis $\sigma<1$ (hence $\frac{3+\sigma}{8}<\frac{1}{2}$).

Then we treat the terms with $j\geq 3$. If $j\in B$, we can use Proposition \ref{strichartz} to bound $\|v_{j}^{-}\|_{L_{t,x}^{q_{4}}}\lesssim 1$; if $j\in A$, we can use Corollary \ref{prob} to bound $\|v_{j}^{-}\|_{L_{t,x}^{q_{4}}}\lesssim T^{\frac{1}{10p}(\frac{1}{2}-b)}N_{1}^{\frac{1-\sigma}{100p}}$ for all $t_{0}$, with exceptional probability $\leq c_{1}e^{-c_{2}T^{-c_{3}}N_{1}^{c_{4}}}$. Putting these together, we have proved\begin{equation}(\ref{phfour})\lesssim T^{\frac{1}{2}-b}N_{1}^{-\theta},\end{equation} with exceptional probability $\leq c_{1}e^{-c_{2}T^{-c_{3}}N_{1}^{c_{4}}}$ for some $\theta>0$. Thus we have proved (\ref{separate}) in this case.

In the final case, we assume that $N_{1}>(10p)^{3}(\max_{j\geq 2}N_{j})^{\frac{1+\sigma}{3\sigma-1}}$, which in particular implies $N_{1}>10^{3}\sum_{j\geq 1}N_{j}$, and that $1\in A$. For each $j\in B$, by definition we can extend $u_{j}$ to be a function on $\mathbb{R}\times\mathbb{R}^{2}$ (still denoted by $u_{j}$) with $\mathcal{X}^{\sigma,b}$ norm $\lesssim 1$. The relation $v_{j}=(u_{j})_{N_{j}}$ also extends to $t\in\mathbb{R}$, giving an extension of $v_{j}$ also. Choose $\zeta_{0}$ smooth, supported on $[-2,2]$ and equals $1$ on $[-1,1]$ and define $\zeta(t)=\zeta_{0}(T^{-1}t)$. We are to prove \begin{equation}\label{separate3}\|\zeta\cdot v_{1}^{-}\cdots v_{p}^{-}\|_{\mathcal{X}^{\sigma,3b-2}}\lesssim T^{\frac{1}{2}-b}N_{1}^{-\theta}\end{equation}for the extended $v_{j}$, with exceptional probability $\leq c_{1}e^{-c_{2}T^{-c_{3}}N_{1}^{c_{4}}}$. For a function $w$ on $\mathbb{R}\times\mathbb{R}^{2}$ radial in $x$, we split $w=w_{ne}+w_{fa}$, with\begin{equation}\mathcal{F}_{t}\langle w_{ne},e_{k}\rangle(\tau)=\chi_{\{|\tau+4k+2|\leq N_{1}^{\gamma}\}}\cdot \mathcal{F}_{t}\langle w,e_{k}\rangle(\tau),\end{equation} and $w_{fa}$ by replacing the $\leq$ by $>$. We now split the product in (\ref{separate3}) into $fa$ and $ne$ parts and estimate them separately.

We first estimate the $fa$ part of product as (due to the presence of $\zeta$, we can work on time interval $[-2T,2T]$ in the time-Lebesgue norms below, thus gaining powers in $T$)\setlength\arraycolsep{2pt}
\begin{eqnarray}
\label{sb2one}\|(\zeta\cdot v_{1}^{-}\cdots v_{p}^{-})_{fa}\|_{\mathcal{X}^{\sigma,3b-2}}&\lesssim&N_{1}^{-\frac{\gamma}{36}}\|\zeta\cdot v_{1}^{-}\cdots v_{p}^{-}\|_{\mathcal{X}^{\sigma,-\frac{4}{9}}}\\
\label{sb2two}&\lesssim &N_{1}^{-\frac{\gamma}{36}}\|\zeta\cdot v_{1}^{-}\cdots v_{p}^{-}\|_{L_{t}^{\frac{3}{2}}\mathcal{W}_{x}^{\sigma,\frac{3}{2}}}\\
\label{sb2three}&\lesssim &N_{1}^{\sigma-\frac{\gamma}{36}}\prod_{i}\|v_{i}^{-}\|_{L_{t,x}^{\frac{3p}{2}}}.
\end{eqnarray}
Here in (\ref{sb2one}) we have used the definition of the $fa$-projection and that $b$ is close to $\frac{1}{2}$ (in particular, $b<\frac{1}{2}+\frac{1}{108}$); in (\ref{sb2two}) we have used Proposition \ref{strichartz}; in (\ref{sb2three}) we have combined Corollary \ref{sobbb} and Proposition \ref{sobolev}. Now for each $i$, if $i\in B$ then (provided $\sigma$ is close to $1$ depending on $p$) $\|v_{i}^{-}\|_{L_{t,x}^{\frac{3p}{2}}}\lesssim \|v_{i}\|_{\mathcal{X}^{\sigma,b}}\lesssim 1$. If $i\in A$ (such as $i=1$) we have $\|v_{i}^{-}\|_{L_{t,x}^\frac{3p}{2}}\lesssim T^{\frac{1}{10p}(\frac{1}{2}-b)}N_{1}^{\frac{1}{p}}$ for all $t_{0}$, with exceptional probability $\leq c_{1}e^{-c_{2}T^{-c_{3}}N_{1}^{c_{4}}}$. Therefore, we have $(\ref{sb2three})\lesssim T^{\frac{1}{2}-b}N_{1}^{-\theta}$ with exceptional probability $\leq c_{1}e^{-c_{2}T^{-c_{3}}N_{1}^{c_{4}}}$, provided $\gamma>108$.

Now we estimate the $ne$ part of the product. Choose $v_{0}$ so that $\|v_{0}\|_{\mathcal{X}^{0,2-3b}}\lesssim 1$. Since we are taking the $ne$ part, we may assume $v_{0}=v_{0,ne}$. The aim is to estimate $|\mathfrak{J}|$ (recall $H$ is self-adjoint), where\begin{equation}\mathfrak{J}=\int_{\mathbb{R}\times\mathbb{R}^{2}}v_{1}^{-}\cdots v_{p}^{-}\cdot (\zeta \mathbf{H}^{\frac{\sigma}{2}}\bar{v}_{0}).\end{equation} We use Lemma \ref{represent} to write down\begin{equation}\label{rep}v_{j}(x,t)=\int_{\mathbb{R}}\phi_{j}(\lambda_{j})e^{\mathrm{i}\lambda_{j} t}\sum_{k}a_{\lambda_{j}}^{j}(k)e^{-\mathrm{i}(4k+2)t}e_{k}(x)\,\mathrm{d}\lambda_{j}\end{equation} for $j\in B\cup\{0\}$, where the parameters satisfy\begin{equation}\label{sqsum}\sum_{k}|a_{\lambda_{0}}^{0}(k)|^{2}\lesssim 1\end{equation} for each $\lambda_{0}$. Since $v_{0}=v_{0,ne}$, we also have $\|\phi_{0}\|_{L^{1}}\lesssim N_{1}^{3\gamma(b-\frac{1}{2})}$. For $j\in B$, since $v_{j}=(u_{j})_{N_{j}}$, we know $a_{\lambda_{j}}^{j}(n_{j})=0$ unless $\frac{1}{10}\leq\frac{4n_{j}+2}{N_{j}^{2}}\leq 10$, and hence\begin{equation}\label{squarectrl}\sum_{4n_{j}+2\sim N_{j}^{2}}|a_{\lambda_{j}}^{j}(k)|^{2}\lesssim N_{j}^{-2\sigma}.\end{equation} Also since $b>\frac{1}{2}$, we have $\|\phi_{j}\|_{L^{1}}\lesssim 1$.

 For the sake of convenience, in the following proof, we shall use $v^{\sim}(n,\tau)$ to denote $\mathcal{F}_{t}\langle v,e_{n}\rangle(\tau)$. Thus from (\ref{rep}) we have \begin{equation}\label{gamma1}v_{j}^{\sim}(n_{j},\tau_{j})=(2\pi)^{\frac{1}{2}} a_{\tau_{j}+4n_{j}+2}^{j}(n_{j})\phi_{j}(\tau_{j}+4n_{j}+2)\end{equation} for $j\in B$. If $j\in A$ we have \begin{equation}\label{gamma2}v_{j}^{\sim}(n_{j},\tau_{j})=(2\pi)^{\frac{1}{2}}e^{-\mathrm{i}(4n_{j}+2)t_{0}}\frac{\theta_{j}(n_{j})g_{n_{j}}(\omega)}{\sqrt{4n_{j}+2}}\delta(\tau_{j}+4n_{j}+2),\end{equation} where \[\theta_{j}(n_{j})=\eta\big(\frac{2(4n_{j}+2)}{N_{j}^{2}}\big)-\eta\big(\frac{4(4n_{j}+2)}{N_{j}^{2}}\big).\]Clearly $|\theta_{j}|\leq 2$, and $\theta_{j}\neq 0$ only when $\frac{1}{10}\leq\frac{4n_{j}+2}{N_{j}^{2}}\leq 10$ (note we have fixed $N_{j}$). Finally, for $j=0$ we have (we may assume $\zeta$ is real)  \setlength\arraycolsep{2pt}\begin{eqnarray}\label{gamma3}(\zeta \mathbf{H}^{\frac{\sigma}{2}}v_{0})^{\sim}(n_{0},\tau_{0})&=&(4n_{0}+2)^{\frac{\sigma}{2}}\cdot \int_{\mathbb{R}} a_{\varrho_{0}+4n_{0}+2}^{0}(n_{0})\\
 &\times&\phi_{0}(\varrho_{0}+4n_{0}+2)\hat{\zeta}(\tau_{0}-\varrho_{0})\,\mathrm{d}\varrho_{0}.\nonumber\end{eqnarray}
 
 We write $\gamma_{j}=v_{j}^{\sim}$ for $j\geq 1$, and $\gamma_{0}=(\zeta \mathbf{H}^{\frac{\sigma}{2}}v_{0})^{\sim}$. From the rules of Fourier transform and orthogonality of $e_{k}$, we have
 \begin{equation}\mathfrak{J}=(2\pi)^{-\frac{p-2}{2}}\sum_{n_{1},\cdots,n_{p},n_{0}}\kappa_{n_{1},\cdots,n_{p}}^{n_{0}}\int_{\mathbb{D}}\prod_{j=0}^{p}(\gamma_{j}(n_{j},\tau_{j}))^{-}\,\mathrm{d}\tau_{1}\cdots\mathrm{d}\tau_{p},\end{equation} where \begin{equation}\kappa_{n_{1},\cdots,n_{p}}^{n_{0}}=\int_{\mathbb{R}^{2}}e_{n_{1}}(x)\cdots e_{n_{p}}(x)e_{n_{0}}(x)\,\mathrm{d}x,\end{equation} and\begin{equation}\mathbb{D}=\bigg\{(\tau_{1},\cdots,\tau_{p},\tau_{0}):\tau_{0}=\sum_{j=1}^{p}\epsilon_{j}\tau_{j}\bigg\},\end{equation} with $\epsilon_{j}=\pm 1$ depending on the choice of $v_{j}$ or $\bar{v}_{j}$. We notice that $\epsilon_{j}=1$ if and only if the corresponding $\gamma_{j}^{-}$ takes $\gamma_{j}$. Now plug in (\ref{gamma1}), (\ref{gamma2}), and (\ref{gamma3}), and use the change of variables $\lambda_{j}=\tau_{j}+4n_{j}+2$ for $j\in B$, $\lambda_{0}=\varrho_{0}+4n_{0}+2$, we get
 \setlength\arraycolsep{2pt}
\begin{eqnarray}\mathfrak{J}&=&2\pi\sum_{n_{1},\cdots,n_{p},n_{0}}\kappa_{n_{1},\cdots,n_{p}}^{n_{0}}\int\prod_{j\in B\cup\{0\}}\mathrm{d}\lambda_{j}\\
&\times &\prod_{j\in B}\phi_{j}(\lambda_{j})a_{\lambda_{j}}^{j}(n_{j})^{-}\prod_{j\in A}\frac{\theta_{j}(n_{j})g_{n_{j}}^{-}(\omega)}{\sqrt{4n_{j}+2}}\cdot a_{\lambda_{0}}^{0}(n_{0})^{-}\phi_{0}(\lambda_{0})\nonumber\\
&\times&\hat{\zeta}\bigg(\sum_{j\in B}\epsilon_{j}\lambda_{j}-\lambda_{0}-\sum_{j=1}^{p}\epsilon_{j}(4n_{j}+2)+(4n_{0}+2)\bigg)^{-}\nonumber\\
&\times&(4n_{0}+2)^{\frac{\sigma}{2}}\exp\bigg(-\mathrm{i}t_{0}\sum_{j\in A}(4n_{j}+2)\epsilon_{j}\bigg).\nonumber
\end{eqnarray} Here the terms corresponding to $j\in A$ are delta functions and have already been encorporated in the final expression. Let $\varrho=(4n_{0}+2)-\sum_{j=1}^{p}\epsilon_{j}(4n_{j}+2)$, we can further reduce the expression to  \setlength\arraycolsep{2pt}\begin{eqnarray}\mathfrak{J}&=&(2\pi)^{p+\frac{1}{2}}\sum_{\varrho\in\mathbb{Z}}\int \prod_{j\in B\cup\{0\}}\phi_{j}(\lambda_{j})\,\mathrm{d}\lambda_{j}\cdot\hat{\zeta}\bigg(\sum_{j\in B}\epsilon_{j}\lambda_{j}-\lambda_{0}+\varrho\bigg)^{-}\\&\times&\sum_{\mathbb{S}_{\varrho}}\kappa_{n_{1},\cdots,n_{p}}^{n_{0}}(4n_{0}+2)^{\frac{\sigma}{2}}\prod_{j\in B\cup\{0\}}a_{\lambda_{j}}^{j}(n_{j})^{-}\prod_{j\in A}\frac{\theta_{j}(n_{j})g_{n_{j}}^{-}(\omega)}{\sqrt{4n_{j}+2}}\nonumber\\
&\times &\exp\bigg(-\mathrm{i}t_{0}\sum_{j\in A}(4n_{j}+2)\epsilon_{j}\bigg),\nonumber
\end{eqnarray} where we write \begin{equation}\mathbb{S}_{\varrho}=\bigg\{(n_{0},\cdots, n_{p}):\frac{1}{10}\leq\frac{4n_{j}+2}{N_{j}^{2}}\leq 10\,\,(j\geq 1),\,\,(4n_{0}+2)-\sum_{j=1}^{p}\epsilon_{j}(4n_{j}+2)=\varrho\bigg\}.\end{equation} Notice that $\hat{\zeta}=T\hat{\zeta}_{0}(T\cdot)$, and that $\hat{\zeta}_{0}$ is a Schwartz function, we have\begin{equation}\sum_{\varrho\in\mathbb{Z}}|\hat{\zeta}(\lambda+\varrho)|\lesssim \sum_{\varrho\in\mathbb{Z}} T\langle T(\lambda+\varrho)\rangle^{-2}\lesssim 1\end{equation} for all $\lambda\in[0,1]$, and by periodicity, for all $\lambda\in\mathbb{R}$. Therefore\begin{equation}\sum_{\varrho\in\mathbb{Z}}\int\prod_{j\in B\cup\{0\}}|\phi_{j}(\lambda_{j})|\,\mathrm{d}\lambda_{j}\cdot\hat{\zeta}\bigg|\bigg(\sum_{j\in B}\epsilon_{j}\lambda_{j}-\lambda_{0}+\varrho\bigg)\bigg|\lesssim N_{1}^{3\gamma(b-\frac{1}{2})}.\end{equation} Since we choose $b$ close enough to $\frac{1}{2}$ depending on $\sigma$ and $p$, and $\gamma$ does not have any dependence on $b$ whatsoever (we may simply take $\gamma=200$), (\ref{separate3}) will follow if \setlength\arraycolsep{2pt}\begin{eqnarray}\label{cs}\bigg|\sum_{\mathbb{S}_{\varrho}}\kappa_{n_{1},\cdots,n_{p}}^{n_{0}}(4n_{0}+2)^{\frac{\sigma}{2}}&\times&\prod_{j\in B\cup\{0\}}a_{\lambda_{j}}^{j}(n_{j})^{-}\prod_{j\in A}\frac{\theta_{j}(n_{j})g_{n_{j}}^{-}(\omega)}{\sqrt{4n_{j}+2}}\\
&\times&\exp\bigg(-\mathrm{i}t_{0}\sum_{j\in A}(4n_{j}+2)\epsilon_{j}\bigg)\bigg|\lesssim T^{\frac{1}{2}-b}N_{1}^{-\delta}\nonumber,\end{eqnarray} for \emph{all} possible choices of $t_{0}\in\mathbb{R}$, $\varrho\in\mathbb{Z}$, $\lambda_{j}\in\mathbb{R}(j\in B\cup\{0\})$, $\{a_{\lambda_{j}}^{j}(k)\}$ satisfying (\ref{sqsum}) and (\ref{squarectrl}), with $\delta>0$ depending on $\sigma$ and $p$, but \emph{not} on $b$.

Next, by Cauchy-Schwartz in the sum with respect to $n_{0}$, we can further estimate the LHS of (\ref{cs}) by \setlength\arraycolsep{2pt}\begin{eqnarray}\label{cs2}\bigg(\sum_{n_{0}}(4n_{0}+2)^{\sigma}&\times&\bigg|\sum_{\mathbb{S}_{\varrho,n_{0}}}\kappa_{n_{1},\cdots,n_{p}}^{n_{0}}\prod_{j\in B}b_{j}(n_{j})^{-}\prod_{j\in A}\frac{\theta_{j}(n_{j})g_{n_{j}}^{-}(\omega)}{\sqrt{4n_{j}+2}}\\
&\times&\exp\bigg(-\mathrm{i}t_{0}\sum_{j\in A}(4n_{j}+2)\epsilon_{j}\bigg)\bigg|^{2}\bigg)^{\frac{1}{2}}\nonumber,\end{eqnarray} where $\mathbb{S}_{\varrho,n_{0}}=\{(n_{1},\cdots,n_{p}):(n_{0},\cdots,n_{p})\in\mathbb{S}_{\varrho}\}$, and $b_{j}(k)=a_{\lambda_{j}}^{j}(k)$.

Concerning the inner sum of (\ref{cs2}) we have (recall that $\frac{1}{10}\leq\frac{4n_{j}+2}{N_{j}^{2}}\leq 10$ for each $1\leq j\leq p$)  \setlength\arraycolsep{2pt}
\begin{eqnarray}
\label{sb5zero}&&\bigg|\sum_{\mathbb{S}_{\varrho,n_{0}}}\kappa_{n_{1},\cdots,n_{p}}^{n_{0}}\prod_{j\in B}b_{j}(n_{j})^{-}\prod_{j\in A}\frac{\theta_{j}(n_{j})g_{n_{j}}^{-}(\omega)}{\sqrt{4n_{j}+2}}\cdot e^{-\mathrm{i}t_{0}\sum_{j\in A}(4n_{j}+2)\epsilon_{j}}\bigg|\\&\lesssim &\label{sb5one}\sum_{(n_{j})_{j\in B}}\bigg|\sum_{\Theta}\tau_{n_{1},\cdots,n_{p}}^{n_{0}}\prod_{j\in A}g_{n_{j}}^{-}(\omega)\bigg|\prod_{j\in B}|b_{j}(n_{j})|\\
&\lesssim &\sup_{(n_{j})_{j\in B}}\bigg|\sum_{\Theta}\tau_{n_{1},\cdots,n_{p}}^{n_{0}}\prod_{j\in A}g_{n_{j}}^{-}(\omega)\bigg| \prod_{j\in B}\sum_{4n_{j}+2\sim N_{j}^{2}}|b_{j}(n_{j})|\nonumber\\ 
\label{sb5two}&\lesssim &\sup_{(n_{j})_{j\in B}}\bigg|\sum_{\Theta}\tau_{n_{1},\cdots,n_{p}}^{n_{0}}\prod_{j\in A}g_{n_{j}}^{-}(\omega)\bigg|\prod_{j\in B}(N_{j}^{2}N_{j}^{-2\sigma})^{\frac{1}{2}}\\
&\lesssim & \sup_{(n_{j})_{j\in B}}\bigg|\sum_{\Theta}\tau_{n_{1},\cdots,n_{p}}^{n_{0}}\prod_{j\in A}g_{n_{j}}^{-}(\omega)\bigg|\prod_{j\in B}N_{j}^{1-\sigma}\nonumber
,\end{eqnarray} where in (\ref{sb5one}) we write $\Theta=\{(n_{j})_{j\in A}:(n_{1},\cdots,n_{p})\in\mathbb{S}_{\varrho,n_{0}}\}$ for fixed $(n_{j})_{j\in B}$, and $\tau_{n_{1},\cdots,n_{p}}^{n_{0}}=\kappa_{n_{1},\cdots,n_{p}}^{n_{0}}\prod_{j\in A}\theta_{j}(n_{j})(4n_{j}+2)^{-\frac{1}{2}}$. One should notice that for all $(n_{j})_{j\in A}\in\Theta$, by definition the expression $e^{-\mathrm{i}t_{0}\sum_{j\in A}(4n_{j}+2)\epsilon_{j}}$ is a fixed constant with absolute value $1$ which can be extracted. In (\ref{sb5two}) we have used Cauchy-Schwartz and (\ref{squarectrl}).

Let us fix $\varrho$ and $n_{0}$, and $(n_{j})_{j\in B}$. We also assume $|4n_{0}+2-\varrho|\lesssim N_{1}^{2}$ (otherwise $\mathbb{S}_{\varrho,n_{0}}$ would be empty). Since the set $\Theta$ has the form of $\Xi$ in (\ref{Xi}) and $N_{1}>10^{3}\sum_{j\in A-\{1\}}N_{j}$, we can use Proposition \ref{gaussianest} to get\begin{equation}\bigg|\sum_{\Theta}\tau_{n_{1},\cdots,n_{p}}^{n_{0}}\prod_{j\in A}g_{n_{j}}^{-}(\omega)\bigg|\leq K\prod_{j\in A-\{1\}}N_{j}\cdot \sup_{\Theta}|\tau_{n_{1},\cdots,n_{p}}^{n_{0}}|,\end{equation} with exceptional probability $\leq c_{1}e^{-c_{2}K^{c_{3}}}$. We choose $K=T^{\frac{1}{2}-b}N_{1}^{\frac{1-\sigma}{200}}(4n_{0}+2)^{\frac{1-\sigma}{400}}$, then the corresponding exceptional probability is $\leq c_{1}e^{-c_{2}T^{-c_{3}}N_{1}^{c_{4}}(4n_{0}+2)^{c_{5}}}$. If we add them up with respect to \emph{all} possible choices of $\varrho$ and $(n_{j})_{j\in B\cup\{0\}}$, we still get an expression $\leq c_{1}e^{-c_{2}T^{-c_{3}}N_{1}^{c_{4}}}$ (there are $\lesssim N_{1}^{2}$ choices for each $n_{j}(j\in B)$, and for fixed $n_{0}$, there are $\lesssim N_{1}^{2}$ choices of $\varrho$). Therefore with exceptional probability $\leq c_{1}e^{-c_{2}T^{-c_{3}}N_{1}^{c_{4}}}$ we have
\setlength\arraycolsep{2pt}
\begin{eqnarray}
(\ref{cs2})&\lesssim& T^{\frac{1}{2}-b}N_{1}^{\frac{1-\sigma}{200}}\prod_{j\in B}N_{j}^{1-\sigma}\prod_{j\in A-\{1\}}N_{j}\\
&\times&\bigg(\sum_{n_{0}}(4n_{0}+2)^{\sigma+\frac{1-\sigma}{200}}\sup_{\mathbb{S}_{\mu,n_{0}}}|\tau_{n_{1},\cdots,n_{p}}^{n_{0}}|^{2}\bigg)^{\frac{1}{2}}\nonumber\\
\label{sb6}&\lesssim &  T^{\frac{1}{2}-b}N_{1}^{\frac{1-\sigma}{200}-1}\prod_{j\in B}N_{j}^{1-\sigma} \\
&\times&\bigg(\sum_{n_{0}}(4n_{0}+2)^{\sigma+\frac{1-\sigma}{200}}\sup_{\mathbb{S}_{\mu,n_{0}}}|\kappa_{n_{1},\cdots,n_{p}}^{n_{0}}|^{2}\bigg)^{\frac{1}{2}}.\nonumber
\end{eqnarray}

To complete the proof of Proposition \ref{longest}, let us estimate $\kappa_{n_{1},\cdots,n_{p}}^{n_{0}}$. Let $\nu_{(0)}\geq\cdots\geq\nu_{(p)}$ be the nonincreasing permutation of $\nu_{j}=4n_{j}+2(0\leq j\leq p)$. If $\nu_{0}\geq N_{1}^{2(1+\frac{1-\sigma}{200})}$, from Lemma \ref{multil} we have $|\kappa_{n_{1},\cdots,n_{p}}^{n_{0}}|\lesssim \nu_{0}^{-100}$. If $\nu_{0}<N_{1}^{2(1+\frac{1-\sigma}{200})}$, since $N_{1}\gtrsim (\max_{j\geq 2}N_{j})^{\frac{\sigma+1}{3\sigma-1}}$, we see that if $\nu_{0}\leq\max_{j\geq 2}\nu_{j}$, then $\nu_{1}\gtrsim\max_{j\neq 1}\nu_{j}^{\frac{\sigma+1}{3\sigma-1}}$ and $|\kappa_{n_{1},\cdots,n_{p}}^{n_{0}}|\lesssim N_{1}^{-100}$; if $\nu_{0}>\max_{j\geq 2}\nu_{j}$, then $\nu_{(2)}\geq \max_{j\geq 2}\nu_{j}$ and from Lemma \ref{multil} we have\begin{equation}|\kappa_{n_{1},\cdots,n_{p}}^{n_{0}}|\lesssim \nu_{(0)}^{-\frac{1}{2}}\nu_{(2)}^{-\frac{1}{4}}\log\nu_{(0)}\lesssim N_{1}^{-1}(\max_{j\geq 2}N_{j})^{-\frac{1}{2}}\log N_{1}.\end{equation} Therefore we have \setlength\arraycolsep{2pt}
\begin{eqnarray}
(\ref{sb6})&\lesssim& T^{\frac{1}{2}-b}N_{1}^{\frac{1-\sigma}{200}-1}\prod_{j\in B}N_{j}^{1-\sigma}\nonumber\\
&\times&\bigg(\sum_{\nu_{0}< N_{1}^{2(1+\frac{1-\sigma}{200})}}(N_{1})^{2(\sigma+\frac{1-\sigma}{200})(1+\frac{1-\sigma}{200})}N_{1}^{-2}(\max_{j\geq 2}N_{j})^{-1}\log^{2}N_{1}\nonumber\\
&+&\sum_{\nu_{0}\geq N_{1}^{2(1+\frac{1-\sigma}{200})}}(4n_{0}+2)^{-198}\bigg)^{\frac{1}{2}}\nonumber\\
&\lesssim &  T^{\frac{1}{2}-b}N_{1}^{-\theta_{0}}\log N_{1}\cdot(\max_{j\geq 2}N_{j})^{-\frac{1}{2}}\prod_{j\in B}N_{j}^{1-\sigma}\nonumber \\&\lesssim &T^{\frac{1}{2}-b}N_{1}^{-\frac{\theta_{0}}{2}}(\max_{j\geq 2}N_{j})^{-\frac{1}{2}}\prod_{j\in B}N_{j}^{1-\sigma},\nonumber
\end{eqnarray}where \begin{equation}\theta_{0}=1-\frac{1-\sigma}{100}-\sigma-\frac{(1+\sigma)(1-\sigma)}{200}-\frac{(1-\sigma)^{2}}{40000}>\frac{1-\sigma}{2}>0.\end{equation} Finally, since $1\in A$, we have\begin{equation}(\max_{j\geq 2}N_{j})^{-\frac{1}{2}}\prod_{j\in B}N_{j}^{1-\sigma}\lesssim(\max_{j\geq 2}N_{j})^{-\frac{1}{2}+(p-1)(1-\sigma)}\lesssim 1,\end{equation} provided $\sigma>1-\frac{1}{2(p-1)}$.

Having considered all the different cases, we have now finished the proof of Proposition \ref{longest}.
\end{proof}

From now on we will fix $\sigma$ and $b$ as stated in Proposition $\ref{longest}$. As an easy corollary of this proposition, we now have

\begin{corollary}\label{picard}There exist some $\theta>0$ and $T_{0}>0$, such that the following holds:
for all $0<T<T_{0}$, there exists a set $\Omega_{T}\subset\Omega$ such that $\mathbb{P}(\Omega_{T})\leq c_{1}e^{-c_{2}T^{-c_{3}}}$ and for all $\omega\not\in\Omega_{T}$, the mapping\begin{equation}\label{contra}u\mapsto e^{-\mathrm{i}t\mathbf{H}}f(\omega)\mp\mathrm{i}\int_{0}^{t}e^{-\mathrm{i}(t-s)\mathbf{H}}(|u(s)|^{p-1}u(s))\,\mathrm{d}s\end{equation} is a contraction mapping from the affine ball\begin{equation}e^{-\mathrm{i}t\mathbf{H}}f(\omega)+\big\{v:\|v\|_{\mathcal{X}^{\sigma,b,T}}\leq T^{\theta}\big\}\end{equation} to itself.
\end{corollary}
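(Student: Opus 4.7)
The plan is to reduce the fixed-point problem to the multilinear bound (\ref{mainpro}) via the inhomogeneous estimate (\ref{linher1}), and then to expand the nonlinearity $|u|^{p-1}u = u^{(p+1)/2}\bar u^{(p-1)/2}$ into $2^{p}$ $p$-fold products to which Proposition \ref{longest} applies term by term. I would take $\Omega_{T}$ to be exactly the exceptional set of Proposition \ref{longest} specialized to $t_{0}=0$, so that the probability bound $\mathbb{P}(\Omega_{T})\le c_{1}e^{-c_{2}T^{-c_{3}}}$ is inherited at no cost. Fix $\omega\notin\Omega_{T}$ and denote by $\Phi$ the map defined by (\ref{contra}). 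For $u=e^{-\mathrm{i}t\mathbf{H}}f(\omega)+v$ in the affine ball, i.e.\ with $\|v\|_{\mathcal{X}^{\sigma,b,T}}\le T^{\theta}$, the Duhamel bound (\ref{linher1}) of Proposition \ref{linear} yields
\[\|\Phi(u)-e^{-\mathrm{i}t\mathbf{H}}f(\omega)\|_{\mathcal{X}^{\sigma,b,T}}\lesssim \bigl\||u|^{p-1}u\bigr\|_{\mathcal{X}^{\sigma,b-1,T}}.\]

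Since $p$ is an odd integer, the nonlinearity expands as a sum of $2^{p}$ monomials $\prod_{j=1}^{p}w_{j}^{-}$ in which each $w_{j}$ is either $e^{-\mathrm{i}t\mathbf{H}}f(\omega)$ or $v$. For a monomial containing $k$ copies of $v$, I would rescale by writing $v=T^{\theta}\tilde v$ with $\|\tilde v\|_{\mathcal{X}^{\sigma,b,T}}\le 1$, so that every factor now satisfies either hypothesis (\ref{pro}) with $t_{0}=0$ or hypothesis (\ref{det}). Multilinearity together with Proposition \ref{longest} then give, outside $\Omega_{T}$,
\[\Bigl\|\prod_{j=1}^{p}w_{j}^{-}\Bigr\|_{\mathcal{X}^{\sigma,b-1,T}}\lesssim T^{\theta_{0}+k\theta},\]
where $\theta_{0}>0$ is the exponent delivered by Proposition \ref{longest}. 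Summing the $2^{p}$ monomials and using $T\le 1$ produces $\||u|^{p-1}u\|_{\mathcal{X}^{\sigma,b-1,T}}\lesssim T^{\theta_{0}}$, hence $\|\Phi(u)-e^{-\mathrm{i}t\mathbf{H}}f(\omega)\|_{\mathcal{X}^{\sigma,b,T}}\lesssim T^{\theta_{0}}$. Setting $\theta:=\theta_{0}/2$ and choosing $T_{0}$ so small that the implicit constant times $T_{0}^{\theta_{0}/2}$ is at most $1$ shows that $\Phi$ sends the ball into itself.

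For the contraction property, given $u_{1},u_{2}$ in the ball I would expand
\[|u_{1}|^{p-1}u_{1}-|u_{2}|^{p-1}u_{2}\]
as a sum of $p$-linear monomials in which exactly one factor equals $u_{1}-u_{2}=v_{1}-v_{2}$ or its conjugate, and the remaining $p-1$ factors are copies or conjugates of $u_{1}$ or $u_{2}$. Decomposing the latter as $e^{-\mathrm{i}t\mathbf{H}}f(\omega)+v_{i}$ and rescaling as above, with the distinguished factor pulled out as $\|u_{1}-u_{2}\|_{\mathcal{X}^{\sigma,b,T}}$ times a unit vector of $\mathcal{X}^{\sigma,b,T}$, Proposition \ref{longest} combined with (\ref{linher1}) produces
\[\|\Phi(u_{1})-\Phi(u_{2})\|_{\mathcal{X}^{\sigma,b,T}}\lesssim T^{\theta_{0}}\|u_{1}-u_{2}\|_{\mathcal{X}^{\sigma,b,T}},\]
a strict contraction after possibly shrinking $T_{0}$.

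The main obstacle is already packaged into Proposition \ref{longest} itself: the essential point on which the entire argument rests is that its exceptional set $\Omega_{T}$ depends only on $T$ and operates uniformly over all admissible choices of factors, so that one and the same set handles every monomial in the expansion of $|u|^{p-1}u$ and of $|u_{1}|^{p-1}u_{1}-|u_{2}|^{p-1}u_{2}$ for every $u,u_{1},u_{2}$ in the ball. Granted that uniformity, the corollary follows by a purely deterministic assembly of the Duhamel formula with the multilinear structure of $|u|^{p-1}u$, requiring no further probabilistic input.
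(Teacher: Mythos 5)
Your proposal is correct and follows essentially the same route as the paper's own proof: reduce via the Duhamel estimate (\ref{linher1}), expand $|u|^{p-1}u$ (and the difference $|u_{1}|^{p-1}u_{1}-|u_{2}|^{p-1}u_{2}$) into $p$-fold monomials whose factors are either $e^{-\mathrm{i}t\mathbf{H}}f(\omega)$ or elements of $\mathcal{X}^{\sigma,b,T}$ of norm $\lesssim 1$, and apply Proposition \ref{longest} term by term on the single exceptional set $\Omega_{T}$. Your explicit rescaling $v=T^{\theta}\tilde v$ and the normalization of the distinguished factor $v_{1}-v_{2}$ are just a slightly more careful bookkeeping of what the paper does by noting $\|v\|_{\mathcal{X}^{\sigma,b,T}}\le T^{\theta}\le 1$, so no genuinely new ingredient is involved.
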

\begin{proof} Suppose $u=e^{-\mathrm{i}t\mathbf{H}}f(\omega)+v$, where $\|v\|_{\mathcal{X}^{\sigma,b,T}}\leq T^{\theta}\leq 1$. From Proposition \ref{linear} we have
\setlength\arraycolsep{2pt}
\begin{eqnarray}
\mathfrak{M} &:=&\bigg\|\mp\mathrm{i}\int_{0}^{t}e^{-\mathrm{i}(t-s)\mathbf{H}}(|u(s)|^{p-1}u(s))\,\mathrm{d}s\bigg\|_{\mathcal{X}^{\sigma,b,T}}\nonumber\\
&\lesssim&\big\||u|^{p-1}u\big\|_{\mathcal{X}^{\sigma,b-1,T}}\nonumber\\
&=&\big\|(e^{-\mathrm{i}t\mathbf{H}}f(\omega)+v)^{\frac{p+1}{2}}\cdot(\overline{e^{-\mathrm{i}t\mathbf{H}}f(\omega)}+\bar{v})^{\frac{p-1}{2}}\big\|_{\mathcal{X}^{\sigma,b-1,T}}.\nonumber
\end{eqnarray}
If we expand the product, then each term has the form as in Proposition \ref{longest} (namely, $u_{1}^{-}\cdots u_{p}^{-}$ with each $u_{j}$ either equal to $e^{-\mathrm{i}t\mathbf{H}}f(\omega)$ or has $\mathcal{X}^{\sigma,b,T}$ norm $\lesssim 1$), thus we have $\mathfrak{M}\lesssim T^{\theta_{0}}$ for some $\theta_{0}$ depending only on $\sigma,b$ and $p$; thus if we choose $\theta<\theta_{0}$ and $T_{0}$ small enough, then the mapping does map the affine ball to itself.

In addition,  if $u_{i}=e^{-\mathrm{i}t\mathbf{H}}f+v_{i}$ with $\|v_{i}\|_{\mathcal{X}^{\sigma,b,T}}\leq T^{\theta}$ for $i\in\{1,2\}$, then \setlength\arraycolsep{2pt}
\begin{eqnarray}
\mathfrak{D}&:=&\bigg\|\mp\mathrm{i}\int_{0}^{t}e^{-\mathrm{i}(t-s)\mathbf{H}}(|u_{1}(s)|^{p-1}u_{1}(s)-|u_{2}(x)|^{p-1}u_{2}(s))\,\mathrm{d}s\bigg\|_{\mathcal{X}^{\sigma,b,T}}\nonumber\\
&\lesssim&\big\||u_{1}|^{p-1}u_{1}-|u_{2}|^{p-1}u_{2}\big\|_{\mathcal{X}^{\sigma,b-1,T}}\nonumber\\
&\lesssim&\sum_{\mathbb{F}}\|(u_{1}-u_{2})^{-}\prod_{k=1}^{p-1}u_{j_{k}}^{-}\|_{\mathcal{X}^{\sigma,b-1,T}},\nonumber
\end{eqnarray} where $\mathbb{F}$ is some finite set, and each $j_{k}\in\{1,2\}$. Since $u_{1}-u_{2}=v_{1}-v_{2}\in \mathcal{X}^{\sigma,b,T}$, and each $u_{j}$ is the sum of two terms, one being $e^{-\mathrm{i}t\mathbf{H}}f(\omega)$, the other having $\mathcal{X}^{\sigma,b,T}$ norm $\lesssim 1$, we can use Proposition \ref{longest} to estimate $\mathfrak{D}\lesssim T^{\theta_{0}}\|v_{1}-v_{2}\|_{\mathcal{X}^{\sigma,b,T}}$ for all $\omega\not\in\Omega_{T}$. Thus the result follows if we choose $T$ small enough.
\end{proof}

\section{Local well-posedness results}\label{lwp}

In proving local in time results, we will not care about the $\pm$ sign in (\ref{nls22}). First we define the truncated Cauchy problem\begin{equation}\label{trc}
\left\{
\begin{array}{ll}
\mathrm{i}\partial_{t}u+(\Delta-|x|^{2}) u=(\pm|u|^{p-1}u)_{2^{k}}^{\circ}\\
u(0)=f_{2^{k}}^{\circ}(\omega)
\end{array}
\right.
\end{equation} for each $k\geq 1$. When $k=\infty$, we understand that $v_{2^{\infty}}^{\circ}=v$, so this is just the original equation (\ref{nls22}). If $k<\infty$, we solve (\ref{trc}) in the finite dimensional space $V_{2^{k}}$. We will consider two cases depending on whether $p\geq 3$ odd or $1<p<3$.

\subsection{The algebraic case}\label{defo}
Here we assume $p\geq 3$ is an odd integer, so we could use the estimates is Section \ref{nonlinear}.
\begin{proposition}\label{defolwp}
Suppose $T>0$ is sufficiently small. There exists a set $\Omega_{T}$ (possibly different from the one in Proposition \ref{longest}), such that $\mathbb{P}(\Omega_{T})\leq c_{1}e^{-c_{2}T^{-c_{3}}}$, and when $\omega\not\in\Omega_{T}$, for each $1\leq k\leq \infty$, (\ref{trc}) has a unique solution \begin{equation}\label{space2}u\in e^{-\mathrm{i}t\mathbf{H}}f_{2^{k}}^{\circ}(\omega)+\mathcal{X}^{\sigma,b,T}\end{equation} on $[-T,T]$, satisfying\begin{equation}\label{ctrl2}\|u-e^{-\mathrm{i}t\mathbf{H}}f_{2^{k}}^{\circ}(\omega)\|_{\mathcal{X}^{\sigma,b,T}}\leq T^{\theta}.\end{equation} 
\end{proposition}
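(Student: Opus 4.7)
The case $k=\infty$ is a direct application of Corollary \ref{picard} together with the Banach contraction mapping principle on the affine ball $e^{-\mathrm{i}t\mathbf{H}}f(\omega)+\{v:\|v\|_{\mathcal{X}^{\sigma,b,T}}\leq T^{\theta}\}$. All new content lies in handling the finite $k$ cases and arranging a single exceptional set $\Omega_{T}$ that serves every $k\in\{1,\ldots,\infty\}$ simultaneously.

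For finite $k$ I will run the Picard map
\begin{equation*}
\Phi_{k}(u)=e^{-\mathrm{i}t\mathbf{H}}f_{2^{k}}^{\circ}(\omega)\mp\mathrm{i}\int_{0}^{t}e^{-\mathrm{i}(t-s)\mathbf{H}}\bigl(|u(s)|^{p-1}u(s)\bigr)_{2^{k}}^{\circ}\,\mathrm{d}s
\end{equation*}
on the affine ball $\{u=e^{-\mathrm{i}t\mathbf{H}}f_{2^{k}}^{\circ}(\omega)+v:\|v\|_{\mathcal{X}^{\sigma,b,T}}\leq T^{\theta}\}$. Since $(\cdot)_{2^{k}}^{\circ}$ commutes with $e^{-\mathrm{i}t\mathbf{H}}$ and the data lies in $V_{2^{k}}$, the image of $\Phi_{k}$ is automatically $V_{2^{k}}$-valued, so any fixed point solves (\ref{trc}) as an ODE in $V_{2^{k}}$. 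Using the inhomogeneous estimate (\ref{linher1}) together with the uniform-in-$k$ boundedness of the projection $(\cdot)_{2^{k}}^{\circ}$ on $\mathcal{X}^{\sigma,b-1,T}$ (which follows from Corollary \ref{sobbb}(4) applied termwise in the Hermite expansion), the self-mapping and contraction properties reduce to the multilinear estimate $\|u_{1}^{-}\cdots u_{p}^{-}\|_{\mathcal{X}^{\sigma,b-1,T}}\lesssim T^{\theta_{0}}$ in which each $u_{j}$ is either $e^{-\mathrm{i}t\mathbf{H}}f_{2^{k}}^{\circ}(\omega)$ or has $\mathcal{X}^{\sigma,b,T}$ norm $\lesssim 1$.

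The critical observation is that Proposition \ref{longest} continues to hold verbatim when $e^{-\mathrm{i}t\mathbf{H}}f(\omega)$ is replaced by $e^{-\mathrm{i}t\mathbf{H}}f_{2^{k}}^{\circ}(\omega)$, and moreover the exceptional set can be taken independently of $k$. Its proof draws on only two probabilistic inputs: the Strichartz-type bounds (\ref{sup}) and (\ref{sup2}), which are already formulated as estimates on $\sup_{k\geq 0}\|e^{-\mathrm{i}t\mathbf{H}}f_{2^{k}}^{\circ}(\omega)\|$, and the Gaussian chaos bound (\ref{gau}) of Proposition \ref{gaussianest}, for which restricting the Gaussian indices to $n_{j}\leq 2^{k}$ only shrinks the summation set $\Xi$ and therefore preserves the estimate with identical constants. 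A single union bound over these events then yields an $\Omega_{T}$ with $\mathbb{P}(\Omega_{T})\leq c_{1}e^{-c_{2}T^{-c_{3}}}$ outside of which the multilinear estimate holds simultaneously for every $k$, with the same $\theta_{0}$ and the same implicit constant.

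The main obstacle I anticipate is precisely this uniform-in-$k$ control: in the global theory of Section \ref{gwp} we will send $k\to\infty$ using invariance of the approximating Gibbs measures, so it is essential that neither $\theta$ nor the contraction constant degrades as $k$ grows. Granting the uniform multilinear estimate, Banach's fixed-point theorem produces a unique fixed point in the affine ball, automatically satisfying (\ref{ctrl2}). Uniqueness in the full affine space (\ref{space2}), rather than just in the ball, then follows from a standard bootstrap argument: the continuity of $\delta\mapsto\|u-e^{-\mathrm{i}t\mathbf{H}}f_{2^{k}}^{\circ}(\omega)\|_{\mathcal{X}^{\sigma,b,\delta}}$ and its vanishing as $\delta\to 0$, both guaranteed by Proposition \ref{linear}, force any other candidate solution to enter the contraction ball on some initial subinterval $[-\delta_{0},\delta_{0}]$, agree there with the Picard solution by uniqueness in the ball, and then propagate this agreement by an open-closed continuation to all of $[-T,T]$.
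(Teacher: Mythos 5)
Your overall architecture is close to the paper's, with one structural difference: for finite $k$ the paper does not run a contraction at all — since (\ref{trc}) is an ODE on $V_{2^{k}}$, local existence and uniqueness are automatic, and the whole issue is the a priori bound (\ref{ctrl2}) uniformly in $k$, obtained from Duhamel, a $k$-uniform multilinear estimate (Lemma \ref{longest2}), and the continuity/bootstrap argument of Proposition \ref{linear}. Your per-$k$ Picard scheme is a legitimate alternative, and you correctly reduce everything to the multilinear estimate with $e^{-\mathrm{i}t\mathbf{H}}f_{2^{k}}^{\circ}(\omega)$ in place of $e^{-\mathrm{i}t\mathbf{H}}f(\omega)$, valid outside a single exceptional set for all $k$. (Your uniqueness-in-the-affine-space continuation argument is workable but heavier than needed: once that estimate is available, any solution with $v=u-e^{-\mathrm{i}t\mathbf{H}}f_{2^{k}}^{\circ}(\omega)\in\mathcal{X}^{\sigma,b,T}$ satisfies $\|v\|_{\mathcal{X}^{\sigma,b,T}}\lesssim T^{\theta_{0}}(1+\|v\|_{\mathcal{X}^{\sigma,b,T}})^{p}$ with $v(0)=0$, and continuity of $\delta\mapsto\|v\|_{\mathcal{X}^{\sigma,b,\delta}}$ bootstraps it into the contraction ball on all of $[-T,T]$, which is how the paper argues.)

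The genuine gap is in how you build the single set $\Omega_{T}$. It is true that restricting the indices to $n_{j}\leq 2^{k}$ only shrinks $\Xi$ in (\ref{Xi}), so Proposition \ref{gaussianest} holds for each fixed $k$ with identical constants; but this produces, for each $k$, a \emph{different} family of events, each of probability $\leq c_{1}e^{-c_{2}T^{-c_{3}}}$ with no decay in $k$, so ``a single union bound over these events'' taken over the infinitely many values of $k$ does not give $\mathbb{P}(\Omega_{T})\leq c_{1}e^{-c_{2}T^{-c_{3}}}$: the naive sum diverges. The same issue arises for the linear random inputs, which in the proof of Proposition \ref{longest} are not the $\sup_{k}$ bounds (\ref{sup})--(\ref{sup2}) but dyadic-block bounds of Proposition \ref{lin2}/Corollary \ref{prob} with $N_{j}$-dependent exceptional sets, which also become $k$-dependent after truncation. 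Supplying summability in $k$ is exactly the content of Lemma \ref{longest2}: writing $f_{2^{k}}^{\circ}=f-f_{2^{k}}^{\perp}$, the modified terms live at frequencies $\gtrsim 2^{k}$, so only dyadic blocks with $\max_{j}N_{j}\gtrsim 2^{k}$ contribute and the per-$k$ exceptional probability decays like $c_{1}e^{-c_{2}T^{-c_{3}}2^{kc_{4}}}$, which sums. Alternatively, you could note that for a fixed block $(N_{1},\cdots,N_{p})$ the truncated events coincide with the untruncated ones once $2^{k}\gtrsim\max_{j}N_{j}^{2}$, so only $O(\log\max_{j}N_{j})$ values of $k$ yield new events and the factor is absorbed by $c_{1}e^{-c_{2}T^{-c_{3}}N_{1}^{c_{4}}}$. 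Either mechanism must be stated; as written, the union-bound step is unjustified and it is precisely the step the proposition hinges on.
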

\begin{proof}
When $k=\infty$, the existence and uniqueness directly follows from Corollary \ref{picard} via Picard iteration. Now we assume $1\leq k<\infty$, then the equation (\ref{trc}) is just an ODE, so the solution is unique, and exists until its norm approaches infinity. Thus we only need to obtain the control on each of these solutions, uniformly in $k$. To this end we need the following modification of Proposition \ref{longest}.
\begin{lemma}\label{longest2}For each $T$ sufficiently small, we can find a set (still denoted by $\Omega_{T}$), so that $\mathbb{P}(\Omega_{T})\leq c_{1}e^{-c_{2}T^{-c_{3}}}$, and in Proposition \ref{longest}, if one replaces some $u_{j}$ by any $(u_{j})_{2^{k_{j}}}^{\circ}$ or $(u_{j})_{2^{k_{j}}}^{\perp}$, the result still holds true. Moreover, if there is at least one $(u_{j})_{k_{j}}^{\perp}$, then the left side of (\ref{mainpro}) tends to zero (uniformly in all choices of $u_{j}$) as this $k_{j}\to\infty$.
\end{lemma}
\begin{proof} We use the notations as in Proposition \ref{longest}. Note the projections $u_{2^{k}}^{\circ}$ and $u_{2^{k}}^{\perp}$ are uniformly bounded on $X^{\sigma,b,T}$, we may assume the modification is only for $j\in A$. Since $f_{2^{k}}^{\circ}(\omega)=f(\omega)-f_{2^{k}}^{\perp}(\omega)$ and the result is true when all terms are still $u_{j}$, we may assume each term is either $u_{j}$ or $(u_{j})_{2^{k_{j}}}^{\perp}$, with at least one $(u_{j})_{2^{k_{j}}}^{\perp}$.

For each $(k_{j})$, we follow exactly the proof of Proposition \ref{longest}. Suppose $L=\max_{j}2^{k_{j}}$, then in the dyadic decomposition we only need to consider the terms $\max_{j\in A}N_{j}\gtrsim L$ (for example, if $(N_{1},\cdots,N_{p})\in\mathcal{A}$ with the largest being $N_{1}$, then $\max_{j\geq 2}N_{j}\gtrsim L$; otherwise we have $\max_{j}N_{j}\gtrsim L$). On the other hand, all the probabilistic Lebesgue/Sobolev estimates of $f(\omega)$ we used in Proposition \ref{longest} comes from Corollary \ref{prob}, thus they also hold for $f_{2^{k}}^{\perp}(\omega)=f(\omega)-f_{2^{k}}^{\circ}(\omega)$ uniformly in $k$. As for the multilinear estimates of Gaussians (Proposition \ref{gaussianest}), they indeed hold for fixed $k_{j}$, because fixing $k_{j}$ (and replacing $f(\omega)$ by $f(\omega)_{2^{k_{j}}}^{\circ}$) corresponds to adding constraints $n_{j}\leq 2^{k_{j}}$ in the set $\Xi$ in (\ref{Xi}), which does not affect the estimates in (\ref{comb}) (which is based on upper bounds of the cardinals of some sets). Therefore for fixed $k_{j}$, the estimates about each individual terms (including the ``grouped'' terms in $\mathcal{A}$) in the proof of Proposition \ref{longest} still hold, with constants \emph{independent} of $k_{j}$. Therefore, we have\[\|\mathrm{Modified}(u_{1}^{-}\cdots u_{p}^{-})\|_{X^{\sigma,b,T}}\lesssim\sum_{\max_{j}N_{j}\gtrsim L}T^{\theta}(\max_{j}N_{j})^{-\theta}\lesssim T^{\theta}L^{-\frac{\theta}{2}},\] with exceptional probability not exceeding \[\sum_{\max_{j}N_{j}\gtrsim L}c_{1}e^{-c_{2}T^{-c_{3}}(\max_{j}N_{j})^{c_{4}}}\leq c_{5}e^{-c_{6}T^{-c_{7}}L^{c_{8}}},\]which implies \[\|\mathrm{Modified}(u_{1}^{-}\cdots u_{p}^{-})\|_{X^{\sigma,b,T}}\lesssim T^{\theta}(\max_{j}2^{k_{j}})^{-\frac{\theta}{2}},\] for \emph{all} possible choices of $k_{j}$, with exceptional probability not exceeding\[\sum_{(k_{j})}c_{5}e^{-c_{6}T^{-c_{7}}(\max_{j}2^{k_{j}})^{c_{8}}}\lesssim c_{9}e^{-c_{10}T^{-c_{11}}}.\] If we choose this final exceptional set as our $\Omega_{T}$, we easily see that all requirements are satisfied.
\end{proof}
\begin{remark}\label{rem}
In Proposition \ref{longest} and Lemma \ref{longest2}, the estimates still hold when the $\mathcal{X}^{\sigma,b,T}$ norm is replaced by $\mathcal{X}^{\sigma,b,I}$ (but with $T^{\theta}$ on the right side of (\ref{mainpro}) unchanged), for \emph{any} interval $I\subset [-T,T]$, and $\omega$ outside a \emph{single} $\Omega_{T}$. One can check the proof that all estimates do not become worse with $[-T,T]$ replaced by $I$. In particular we can get a contraction mapping as in Corollary \ref{picard} for interval $[-T,0]$ or $[-T_{1},T_{1}]$ for $T_{1}\leq T$.
\end{remark}
Using Lemma \ref{longest2}, we can now proceed with the proof of Proposition \ref{defolwp}. Suppose for some $k$, $u=e^{-\mathrm{i}t\mathbf{H}}f_{2^{k}}^{\circ}(\omega)+v$ is a maximal solution to (\ref{trc}) (strictly speaking the $T$ below should be another $T'$ denoting the lifespan of $u$, but we will ignore this, in view of Remark \ref{rem}). Then outside the $\Omega_{T}$ constructed in Lemma \ref{longest2} we have \setlength\arraycolsep{2pt}
\begin{eqnarray}
\|v\|_{\mathcal{X}^{\sigma,b,T}} &=&\bigg\|\mp\mathrm{i}\int_{0}^{t}e^{-\mathrm{i}(t-s)\mathbf{H}}\big(|u(s)|^{p-1}u(s)\big)_{2^{k}}^{\circ}\,\mathrm{d}s\bigg\|_{\mathcal{X}^{\sigma,b,T}}\nonumber\\
&\lesssim&\big\||u|^{p-1}u\big\|_{\mathcal{X}^{\sigma,b-1,T}}\nonumber\\
&=&\big\|(e^{-\mathrm{i}t\mathbf{H}}f_{2^{k}}^{\circ}(\omega)+v)^{\frac{p+1}{2}}\cdot(\overline{e^{-\mathrm{i}t\mathbf{H}}f_{2^{k}}^{\circ}(\omega)}+\bar{v})^{\frac{p-1}{2}}\big\|_{\mathcal{X}^{\sigma,b-1,T}}.\nonumber
\end{eqnarray} Each term in the expansion of the final product has the form as in Lemma \ref{longest2} (namely $\prod_{j}(u_{j}^{-})_{2^{k_{j}}}^{\circ}$ with $1\leq k_{j}\leq\infty$, and each $u_{j}$ either equal to $e^{-\mathrm{i}t\mathbf{H}}f(\omega)$ or has $\mathcal{X}^{\sigma,b,T}$ norm $\lesssim \|v\|_{\mathcal{X}^{\sigma,b,T}}$). Therefore for some $\theta>0$ we get \[\|v\|_{\mathcal{X}^{\sigma,b,T}}\lesssim T^{\theta}(1+\|v\|_{\mathcal{X}^{\sigma,b,T}})^{p},\] since $v\in \mathcal{X}^{\sigma,b,T}$ and $v(0)=0$, we know $\|v\|_{\mathcal{X}^{\sigma,b,t}}\to 0$ as $t\to 0$. The local norm is continuous in $t$, thus we can use a bootstrap argument to get $\|v\|_{\mathcal{X}^{\sigma,b,T}}\leq T^{\frac{\theta}{2}}$. Note this also works for the original equation, showing that (\ref{ctrl2}) holds for the solution of (\ref{nls22}) with any $k$. The uniqueness of (\ref{nls22}) now follows from Corollary \ref{picard}.
\end{proof}

\subsection{The subcubic case}\label{foc}
Here we assume $1<p<3$, and we do not need any multilinear estimate to solve the local problem.
\begin{proposition}\label{foclwp}
Suppose $T>0$ is sufficiently small. There exists a set $\Omega_{T}$ (possibly different from the one in Proposition \ref{longest}), such that $\mathbb{P}(\Omega_{T})\leq c_{1}e^{-c_{2}T^{-c_{3}}}$, and when $\omega\not\in\Omega_{T}$, for each $1\leq k\leq \infty$, (\ref{trc}) has a unique solution \begin{equation}\label{space3}u\in e^{-\mathrm{i}t\mathbf{H}}f_{2^{k}}^{\circ}(\omega)+\mathcal{X}^{0,b,T}\end{equation} on $[-T,T]$, satisfying\begin{equation}\label{ctrl3}\|u-e^{-\mathrm{i}t\mathbf{H}}f_{2^{k}}^{\circ}(\omega)\|_{\mathcal{X}^{0,b,T}}\leq T^{\theta}.\end{equation}
\end{proposition}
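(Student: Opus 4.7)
The approach mirrors Proposition \ref{defolwp}, but replaces the multilinear machinery of Section \ref{nonlinear} by the linear Strichartz estimates of Proposition \ref{strichartz} together with the probabilistic bounds of Corollary \ref{prob}; this is legitimate once $p<3$. Writing $u = e^{-\mathrm{i}t\mathbf{H}}f_{2^{k}}^{\circ}(\omega) + v$ with $v \in \mathcal{X}^{0,b,T}$, the Duhamel formula and (\ref{linher1}) yield
\begin{equation}
\|v\|_{\mathcal{X}^{0,b,T}} \lesssim \|(|u|^{p-1}u)_{2^{k}}^{\circ}\|_{\mathcal{X}^{0,b-1,T}} \leq \||u|^{p-1}u\|_{\mathcal{X}^{0,b-1,T}},
\end{equation}
the last step using that the Hermite projection $(\cdot)_{2^{k}}^{\circ}$ is a contraction on $\mathcal{X}^{0,b-1,T}$.

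I would next apply (\ref{stri2.5}) at exponent $q_{2}$ slightly above $2/(2-b)$ (close to $4/3$ when $b$ is close to $\tfrac{1}{2}$), reducing the right-hand side to $T^{1/q_{2}-1/2}\|u\|_{L_{t,x}^{pq_{2}}}^{p}$, and split $u$ into its Gaussian part and the Duhamel correction $v$. The Gaussian piece is handled by the uniform-in-$k$ bound (\ref{sup2}) of Corollary \ref{prob}, which gives
$\|e^{-\mathrm{i}t\mathbf{H}}f_{2^{k}}^{\circ}(\omega)\|_{L_{t,x}^{pq_{2}}([-T,T]\times\mathbb{R}^{2})} \lesssim T^{1/(pq_{2})}$
outside an exceptional set of probability $\leq c_{1} e^{-c_{2} T^{-c_{3}}}$. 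The remainder $v$ is controlled by combining the Strichartz estimate (\ref{str2}) with a H\"older inequality in time, producing a bound of the form $\|v\|_{L_{t,x}^{pq_{2}}} \lesssim T^{\beta}\|v\|_{\mathcal{X}^{0,b,T}}$ with $\beta \geq 0$, available as long as $pq_{2} \leq 4$.

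Assembling these pieces yields an inequality of the form $\|v\|_{\mathcal{X}^{0,b,T}} \lesssim T^{\theta_{1}}\bigl(T^{\theta_{2}} + \|v\|_{\mathcal{X}^{0,b,T}}\bigr)^{p}$, where a suitable choice of $q_{2}$ makes the net exponent positive after substituting $\|v\|_{\mathcal{X}^{0,b,T}} \leq T^{\theta}$. A bootstrap (using Proposition \ref{linear} to see that $\|v\|_{\mathcal{X}^{0,b,\delta}}$ is continuous in $\delta$ and vanishes at $0$) then gives the self-map of the ball $\{\|v\|_{\mathcal{X}^{0,b,T}} \leq T^{\theta}\}$, and the corresponding contraction estimate follows from the pointwise bound $\bigl||a|^{p-1}a - |b|^{p-1}b\bigr| \lesssim (|a|+|b|)^{p-1}|a-b|$ and H\"older. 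For $1 \leq k < \infty$, the truncated equation is a finite-dimensional ODE on $V_{2^{k}}$, so local existence and uniqueness are automatic; the a priori bound just obtained is uniform in $k$, and a standard continuation argument extends the solution to the full interval $[-T,T]$ with control (\ref{ctrl3}).

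The main obstacle is the simultaneous calibration of indices: $q_{2}$ must lie above $2/(2-b)$ so that (\ref{stri2.5}) applies at $\sigma=0$; at or below $4/p$ so that the $L_{t,x}^{pq_{2}}$ Strichartz estimate is available at zero regularity; and low enough that the net $T$ power remains positive after the deleterious factor $T^{1/q_{2}-1/2}$. This window is nonempty precisely in the regime $1<p<3$ once $b$ is taken close enough to $\tfrac{1}{2}$ depending on $p$; for $p \geq 3$ the window collapses and multilinear smoothing becomes necessary, which is exactly the content of Proposition \ref{defolwp}.
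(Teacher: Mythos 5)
Your proposal is correct and is essentially the paper's own argument: a contraction in the ball $\{\|v\|_{\mathcal{X}^{0,b,T}}\leq T^{\theta}\}$ built from (\ref{stri2.5}), H\"older, the Strichartz bound (\ref{str2}), and the uniform-in-$k$ probabilistic bound (\ref{sup2}) of Corollary \ref{prob}; the only difference is bookkeeping — you use the diagonal norm $L_{t,x}^{pq_{2}}$ where the paper uses mixed norms $L_{t}^{r_{i}}L_{x}^{q_{i}}$ with $\frac{1}{q_{i}}+\frac{1}{r_{i}}>\frac{1}{2}$ together with the $T^{2b-1}$ gain from Lemma \ref{linear00}. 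One inaccuracy to fix: (\ref{stri2.5}) carries the factor $\langle T\rangle^{\frac{1}{q_{2}}-\frac{1}{2}}\sim 1$ for $T\leq 1$, not $T^{\frac{1}{q_{2}}-\frac{1}{2}}$, so no smallness comes from that step; the smallness in $T$ must come (and in your scheme does come) from the $T^{1/r}$ factor in Corollary \ref{prob} for the Gaussian factors, the H\"older-in-time gain on the $v$-factors, and the superlinearity $p>1$, so the self-map and contraction estimates still close.
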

\begin{proof}
The proof here is almost the same as Proposition \ref{defolwp}. In fact, once we can obtain\begin{equation}\label{fracchain1}\big\||e^{-\mathrm{i}t\mathbf{H}}f_{2^{k}}^{\circ}(\omega)+v|^{p-1}(e^{-\mathrm{i}t\mathbf{H}}f_{2^{k}}^{\circ}(\omega)+v)\big\|_{\mathcal{X}^{0,b-1,T}}\lesssim T^{\theta}(1+\|v\|_{\mathcal{X}^{0,b,T}}^{p})\end{equation} and 
\begin{equation}\label{fracchain2}\big\||u|^{p-1}u-|u'|^{p-1}u'\big\|_{\mathcal{X}^{0,b-1,T}}\lesssim T^{\theta}\|v-v'\|_{\mathcal{X}^{0,b,T}}\cdot(1+\|v\|_{\mathcal{X}^{0,b,T}}+\|v'\|_{\mathcal{X}^{0,b,T}})^{p-1}\end{equation} for all $1\leq k\leq\infty$ and $\omega\not\in\Omega_{T}$, where $u=e^{-\mathrm{i}t\mathbf{H}}f_{2^{k}}^{\circ}(\omega)+v$ and $u'=e^{-\mathrm{i}t\mathbf{H}}f_{2^{k}}^{\circ}(\omega)+v'$, we can use Proposition \ref{linear} and argue as in the proof of Corollary \ref{picard} to show that for $\omega\not\in\Omega_{T}$, \[u\mapsto e^{-\mathrm{i}t\mathbf{H}}f(\omega)\mp\mathrm{i}\int_{0}^{t}e^{-\mathrm{i}(t-s)\mathbf{H}}(|u(s)|^{p-1}u(s))\,\mathrm{d}s\] is a contraction mapping from \[e^{-\mathrm{i}t\mathbf{H}}f(\omega)+\{v:\|v\|_{\mathcal{X}^{0,b,T}}\leq T^{\theta}\}\] to itself, for some $\theta>0$. Also we will have the same estimates on solutions to (\ref{trc}) as in Proposition \ref{defolwp} which is enough for the proof.

To prove (\ref{fracchain2}), we simply compute (again we omit the time domain $[-T,T]$ here)\setlength\arraycolsep{2pt}
\begin{eqnarray}&&\big\||u|^{p-1}u-|u'|^{p-1}u'\big\|_{\mathcal{X}^{0,b-1,T}}\nonumber\\\label{zhutou01}&\lesssim&T^{2b-1}\big\|(u-u')(|u|+|u'|)^{p-1}\big\|_{L_{t,x}^{q}}\\
\label{zhutou02}&\lesssim&T^{2b-1}\|v-v'\|_{L_{t}^{r_{1}}L_{x}^{q_{1}}}\cdot(\|u\|_{L_{t}^{r_{2}}L_{x}^{q_{2}}}+\|u'\|_{L_{t}^{r_{2}}L_{x}^{q_{2}}})^{p-1}\\
\label{zhutou03}&\lesssim & T^{2b-1}\|v-v'\|_{\mathcal{X}^{0,b,T}}(\|v\|_{\mathcal{X}^{0,b,T}}+\|v'\|_{\mathcal{X}^{0,b,T}}+\|e^{-\mathrm{i}t\mathbf{H}}f_{2^{k}}^{\circ}(\omega)\|_{L_{t}^{r_{2}}L_{x}^{q_{2}}})^{p-1}\\
\label{zhutou04}&\lesssim & T^{b-\frac{1}{2}}\|v-v'\|_{\mathcal{X}^{0,b,T}}\cdot(\|v\|_{\mathcal{X}^{0,b,T}}+\|v'\|_{\mathcal{X}^{0,b,T}}+1)^{p-1}
\end{eqnarray}
outside $\Omega_{T}$, where $\mathbb{P}(\Omega_{T})\leq c_{1}e^{-c_{2}T^{-c_{3}}}$. Here in (\ref{zhutou01}) we have used Proposition \ref{strichartz} and Lemma \ref{linear00}, and require $\frac{1}{2}<b<\frac{2}{3}$, $2>q>\frac{2}{3-3b}$. In (\ref{zhutou02}) we have used H\"{o}lder and require $1<r_{1},q_{1},r_{2},q_{2}<\infty$ and $\frac{1}{r_{1}}+\frac{p-1}{r_{2}}=\frac{1}{q_{1}}+\frac{p-1}{q_{2}}=\frac{1}{q}$. In (\ref{zhutou03}) we have used (H\"{o}lder in time and) Proposition \ref{strichartz} and require \[\frac{1}{q_{1}}+\frac{1}{r_{1}}\geq\frac{1}{2},\,\,\frac{1}{q_{2}}+\frac{1}{r_{2}}\geq\frac{1}{2};\,\,\,\,\,\,\,\,\,2<q_{1},q_{2}<\infty.\]In (\ref{zhutou04}) we have used Corollary \ref{prob} to bound\[\|e^{-\mathrm{i}t\mathbf{H}}f_{2^{k}}^{\circ}(\omega)\|_{L_{t}^{r_{2}}L_{x}^{q_{2}}}\lesssim T^{-\frac{2b-1}{100p}},\] with exceptional probability $\leq c_{1}e^{-c_{2}T^{-c_{3}}}$. Therefore, we may choose $q<\frac{4}{p}$ sufficiently close to $\frac{4}{p}$. In particular $q>\frac{4}{3}$, so we may choose $\frac{1}{2}<b<1-\frac{2}{3q}$. Then a simple computation shows that we can choose $q_{1},q_{2},r_{1},r_{2}$ appropriately so that the scaling equations hold, and $1<r_{1,2}<\infty$, $2<q_{1,2}<\infty$, and $\frac{1}{q_{i}}+\frac{1}{r_{i}}>\frac{1}{2}$ for $i=1,2$. This completes the proof of (\ref{fracchain2}).

The estimate (\ref{fracchain1}) follows from the same choice of exponents and similar arguments. The only difference is that we will have a term $\|e^{-\mathrm{i}t\mathbf{H}}f_{2^{k}}^{\circ}(\omega)\|_{L_{t}^{r_{1}}L_{x}^{q_{1}}}$, which is fine as long as $2<q_{1}<\infty$.
\end{proof}
\subsection{Approximating by ODEs}

Here we will prove that almost surely, uniform global bounds on the solutions to the truncated equations (\ref{trc}) for infinitely many $k<\infty$ implies the global existence and uniqueness for the original equation (\ref{nls22}).
\begin{proposition}\label{appppp} Let $[-T,T]$ be a time interval, where we assume $T$ is large. Suppose for $\omega$ belonging to some set $E$, there exists a subsequence $\{k_{j}\}_{j\geq 0}\uparrow\infty$ (possibly depending on $\omega$) such that each of the equations (\ref{trc}) with $k=k_{j}$ has a unique solution $u_{j}$ on $[-T,T]$ and that\begin{equation}\label{uniformity}\sup_{j}\|u_{j}-e^{-\mathrm{i}t\mathbf{H}}f_{2^{k_{j}}}^{\circ}(\omega)\|_{\mathcal{X}^{\sigma,b,T}}<\infty.\end{equation}Then a.s. $\omega\in E$, the equation (\ref{nls22}) possesses a unique solution $u$ on $[-T,T]$ so that $u\in e^{-\mathrm{i}t\mathbf{H}}f(\omega)+\mathcal{X}^{\sigma,b,T}$. Moreover for this subsequence we have\begin{equation}\label{application}\lim_{j\to\infty}\|u_{j}-e^{-\mathrm{i}t\mathbf{H}}f_{2^{k_{j}}}^{\circ}(\omega)-(u-e^{-\mathrm{i}t\mathbf{H}}f(\omega))\|_{\mathcal{X}^{\sigma,b,T}}=0.\end{equation}
\end{proposition}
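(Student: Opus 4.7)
The plan is to combine the iterated local Cauchy theory (Propositions \ref{defolwp}, \ref{foclwp}, Corollary \ref{picard}, Lemma \ref{longest2}, and Remark \ref{rem}) with a Cauchy sequence argument for the differences $v_j - v_{j'}$, where $v_j := u_j - e^{-\mathrm{i}t\mathbf{H}}f_{2^{k_j}}^{\circ}(\omega)$. I would first handle the full-measure exclusion: for a decreasing sequence $\tau_n\downarrow 0$, the bounds $\mathbb{P}(\Omega_{\tau_n})\leq c_1 e^{-c_2\tau_n^{-c_3}}$ are summable, so by Borel--Cantelli the set $E'\subset E$ on which $\omega\notin\Omega_{\tau}$ for every sufficiently small $\tau$ has full measure in $E$. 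All subsequent work would be done on $E'$, where Lemma \ref{longest2} applies uniformly on every subinterval of $[-T,T]$ of length $2\tau$ (via Remark \ref{rem}, and with arbitrary time shift $t_0$ as in Proposition \ref{longest}).

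Setting $C_0:=\sup_j\|v_j\|_{\mathcal{X}^{\sigma,b,T}}<\infty$, the embedding (\ref{str3}) yields $\sup_j\sup_{t\in[-T,T]}\|v_j(t)\|_{\mathcal{H}^{\sigma}}\lesssim C_0$. I would choose $\tau>0$ small depending on $C_0$ and split $[-T,T]$ into finitely many subintervals $I_m=[t_m,t_{m+1}]$ of length $\tau$. On the first interval $I_0\ni 0$, I would write
\[
v_j-v_{j'}=\mp\mathrm{i}\int_0^t e^{-\mathrm{i}(t-s)\mathbf{H}}\Bigl[\bigl(|u_j|^{p-1}u_j\bigr)_{2^{k_j}}^{\circ}-\bigl(|u_{j'}|^{p-1}u_{j'}\bigr)_{2^{k_{j'}}}^{\circ}\Bigr]\,\mathrm{d}s,
\]
estimate in $\mathcal{X}^{\sigma,b,I_0}$ via Proposition \ref{linear}, and then expand the bracket into multilinear products. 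These split into three families: (a) terms containing the factor $u_j-u_{j'}$, whose difference decomposes as $v_j-v_{j'}$ plus $e^{-\mathrm{i}t\mathbf{H}}(f_{2^{k_j}}^{\circ}-f_{2^{k_{j'}}}^{\circ})(\omega)$; (b) terms coming from the projection discrepancy $(\cdot)_{2^{k_j}}^{\circ}-(\cdot)_{2^{k_{j'}}}^{\circ}$. For (a) with factor $v_j-v_{j'}$, Lemma \ref{longest2} gives a bound $C\tau^\theta\|v_j-v_{j'}\|_{\mathcal{X}^{\sigma,b,I_0}}$, absorbable for small $\tau$; for (a) with factor $e^{-\mathrm{i}t\mathbf{H}}(f_{2^{k_j}}^{\circ}-f_{2^{k_{j'}}}^{\circ})(\omega)$, Lemma \ref{longest2} (reading $e^{-\mathrm{i}t\mathbf{H}}f_{2^{k_j}}^{\perp}(\omega)$ as the distinguished factor) together with Corollary \ref{prob}(\ref{conv}) gives a bound $o_{j,j'\to\infty}(1)$; the same is true for (b) by the ``moreover'' clause of Lemma \ref{longest2}. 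This shows $\{v_j\}$ is Cauchy in $\mathcal{X}^{\sigma,b,I_0}$, with limit $v^{(0)}$, and $u:=e^{-\mathrm{i}t\mathbf{H}}f(\omega)+v^{(0)}$ solves (\ref{nls22}) on $I_0$ by passing to the limit in the Duhamel equation.

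Iteration to the remaining subintervals $I_m$ is carried out inductively: assuming convergence has been established up to $t_m$, the embedding (\ref{str3}) gives $u_j(t_m)\to u(t_m)$ in $\mathcal{H}^{\sigma}$ modulo the random linear piece, and one repeats the Cauchy argument on $I_m$ using Remark \ref{rem} with initial time $t_m$, noting that the random linear part on $I_m$ is still of the form $e^{-\mathrm{i}(s+t_m)\mathbf{H}}f$ or $e^{-\mathrm{i}(s+t_m)\mathbf{H}}f_{2^{k_j}}^{\circ}$ to which Lemma \ref{longest2} applies uniformly in $t_m\in[-T,T]$. Uniqueness of $u$ in $e^{-\mathrm{i}t\mathbf{H}}f(\omega)+\mathcal{X}^{\sigma,b,T}$ follows from the contraction argument of Corollary \ref{picard} by a standard continuation argument. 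The main technical obstacle I anticipate is the propagation step: one must verify that the smallness constant $\tau$ can be chosen uniformly in $m$ (depending only on $C_0$ and the ambient constants), which relies crucially on (\ref{uniformity}) giving a global, rather than interval-dependent, bound. A subsidiary obstacle is bookkeeping the quantitative rate at which (b) vanishes as $k_j\wedge k_{j'}\to\infty$; but since $\tau$ is fixed before sending $j,j'\to\infty$, any qualitative decay, which Lemma \ref{longest2} supplies, suffices.
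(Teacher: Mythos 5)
Your overall strategy differs from the paper's: you build $u$ as the limit of a Cauchy sequence of the truncated solutions $v_j$, interval by interval, whereas the paper first extends $u$ on each subinterval $[t_0,t_0+\delta]$ by a Picard contraction (using the uniform bound $A$ from (\ref{uniformity}) and the time-shifted Proposition \ref{longest}), and only then compares each truncated solution to this \emph{fixed} $u$. Most of your steps (the Borel--Cantelli removal of the exceptional sets, the choice of $\tau$ depending only on $C_0$, the treatment of your family (a) via the ``moreover'' clause of Lemma \ref{longest2} together with (\ref{conv})) are sound and parallel the paper. The genuine gap is your family (b). The discrepancy $(|u_{j'}|^{p-1}u_{j'})_{2^{k_j}}^{\circ}-(|u_{j'}|^{p-1}u_{j'})_{2^{k_{j'}}}^{\circ}$ is an \emph{outer} frequency-window projection of the product, while the ``moreover'' clause of Lemma \ref{longest2} only concerns replacing an individual \emph{factor} $u_i$ by $(u_i)_{2^{k_i}}^{\perp}$; it says nothing about $\big(u_1^{-}\cdots u_p^{-}\big)^{\perp}_{L}$. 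Moreover, in your Cauchy-difference framework this window projection acts on a function that depends on the larger index, so what you actually need is a \emph{uniform-in-$j$} tail estimate $\sup_j\big\|\big(|u_j|^{p-1}u_j\big)^{\perp}_{L}\big\|_{\mathcal{X}^{\sigma,b-1,I_m}}\to 0$ as $L\to\infty$; mere boundedness of the family does not give this (no equicontinuity of tails is known a priori), so the step does not follow from anything you cite.

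This is why the paper's ordering of the argument matters: comparing $w_j$ with the already-constructed $w$, the corresponding term is $-(w-e^{-\mathrm{i}t\mathbf{H}}w(0))_{2^{k_j}}^{\perp}$ in (\ref{equivalentness2}), the tail projection of a \emph{single fixed} element of $\mathcal{X}^{\sigma,b,\delta}$, which vanishes softly as $k_j\to\infty$. To rescue your route you would either (i) restructure as in the paper — first run the Picard contraction with data $u(t_m)$ to produce $u$ on $I_m$, then show $v_j\to u-e^{-\mathrm{i}t\mathbf{H}}f(\omega)$ by comparing to this fixed limit, at which point the outer tail projection is harmless — or (ii) prove a quantitative outer-tail estimate, e.g.\ that the nonlinearity of Proposition \ref{longest} actually lands in $\mathcal{X}^{\sigma+\epsilon,b-1,T}$ (or that $\|P^{\perp}_{L}(u_1^{-}\cdots u_p^{-})\|_{\mathcal{X}^{\sigma,b-1,T}}\lesssim T^{\theta}L^{-\theta'}$ outside the same exceptional set). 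The latter is plausible with the paper's tools — split each factor at frequency $L^{c}$, use the ``moreover'' clause for the pieces with a high factor, and use the rapid decay (\ref{lgdecay}) of Lemma \ref{multil} when the output frequency dominates all input frequencies — but it is an additional lemma you must prove, not a citation. As a minor point, your uniqueness claim needs the bootstrap step (via the continuity statement in Proposition \ref{linear}) showing that an arbitrary solution in the affine space enters the contraction ball on small intervals, as in Proposition \ref{defolwp}; and Borel--Cantelli gives $\omega\notin\Omega_{\tau_n}$ only along the chosen sequence $\tau_n$, which suffices but is not ``for every sufficiently small $\tau$''.
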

\begin{proof} For $\omega\in E$, with small exceptional probability (tending to $0$ as $A\to\infty$), we may choose a sequence $u_{j}$ solving (\ref{trc}) with $k=k_{j}\uparrow\infty$, and \begin{equation}\label{limit}\|u_{j}-e^{-\mathrm{i}t\mathbf{H}}f_{2^{k_{j}}}^{\circ}(\omega)\|_{\mathcal{X}^{\sigma,b,T}}\leq A,\end{equation} for all $j$. Then we choose an integer $M$ large enough depending on $T$ and $A$. We are going to prove for each $1\leq m\leq M$ that, (\ref{nls22}) has a unique solution $u\in e^{-\mathrm{i}t\mathbf{H}}f(\omega)+\mathcal{X}^{\sigma,b,\frac{mT}{M}}$ on the interval $[-\frac{mT}{M},\frac{mT}{M}]$, and \begin{equation}\label{approximate}\lim_{j\to\infty}\|u_{j}-e^{-\mathrm{i}t\mathbf{H}}f_{2^{k_{j}}}^{\circ}(\omega)-(u-e^{-\mathrm{i}t\mathbf{H}}f(\omega))\|_{\mathcal{X}^{\sigma,b,\frac{mT}{M}}}\to 0,\end{equation} for $\omega$ outside the fixed set $\Omega_{\frac{T}{M}}$ which is constructed in the proof of Lemma \ref{longest2}. Since $\mathbb{P}(\Omega_{\frac{T}{M}})\to 0$ as $M\to\infty$, this clearly contains the conclusion we need.

Now we proceed by induction on $m$. First assume $p\geq 3$ is odd. Suppose the conclusion holds for $m-1$ (including $m=1$), we will prove it for $m$. Write $\delta=M^{-1}T$ and $t_{0}=(m-1)\delta$, we know the solution $u$ exists and is unique on $[-t_{0},t_{0}]$, and we want to extend it to $[-(t_{0}+\delta),t_{0}+\delta]$. Without loss if generality we consider the half-line $t>0$.

From (\ref{limit}) and (\ref{approximate}) we have \begin{equation}\label{zhutou10}\lim_{j\to\infty}\|u_{j}(t_{0})-u(t_{0})+e^{-\mathrm{i}t_{0}\mathbf{H}}f_{2^{k_{j}}}^{\perp}(\omega)\|_{\mathcal{H}^{\sigma}}=0,\end{equation} and $\|u(t_{0})-e^{-\mathrm{i}t_{0}\mathbf{H}}f(\omega)\|_{\mathcal{H}^{\sigma}}\leq A$. We would like to solve the equation (\ref{nls2}) with initial data $u(t_{0})$ on $[-\delta,\delta]$, and argue as in Corollary \ref{picard}. Here the linear term is not $e^{-\mathrm{i}t\mathbf{H}}f(\omega)$, but \[e^{-\mathrm{i}t\mathbf{H}}u(t_{0})=e^{-\mathrm{i}(t+t_{0})\mathbf{H}}f(\omega)+v,\] where $v$ is the linear evolution of some function with $\mathcal{H}^{\sigma}$ norm $\lesssim A$, thus $\|v\|_{\mathcal{X}^{\sigma,b,\delta}}\lesssim A$ (this is easily proved by introducing a cutoff and using $\delta\leq 1$). Since $\omega\not\in\Omega_{\frac{T}{M}}$, we can use the full strength of proposition \ref{longest} and Lemma \ref{longest2}. In particular we could proceed as in the proof of Corollary \ref{picard} and obtain 
\begin{equation}
\mathfrak{M}:=\bigg\|\mp\mathrm{i}\int_{0}^{t}e^{-\mathrm{i}(t-s)\mathbf{H}}(|w_{1}(s)|^{p-1}w_{1}(s))\,\mathrm{d}s\bigg\|_{\mathcal{X}^{\sigma,b,\delta}}\nonumber\lesssim \delta^{\theta_{0}}A^{p}\leq \delta^{\theta},\nonumber
\end{equation}
and
\setlength\arraycolsep{2pt}
\begin{eqnarray}
\mathfrak{D}&=&\bigg\|\mp\mathrm{i}\int_{0}^{t}e^{-\mathrm{i}(t-s)\mathbf{H}}(|w_{1}(s)|^{p-1}w_{1}(s)-|w_{2}(x)|^{p-1}w_{2}(s))\,\mathrm{d}s\bigg\|_{\mathcal{X}^{\sigma,b,\delta}}\nonumber\\
&\lesssim&\delta^{\theta_{0}}A^{p-1}\|h_{1}-h_{2}\|_{\mathcal{X}^{\sigma,b,\delta}}<\frac{1}{2}\|h_{1}-h_{2}\|_{\mathcal{X}^{\sigma,b,\delta}},\nonumber
\end{eqnarray} for all $w_{i}=e^{-\mathrm{i}t\mathbf{H}}u(t_{0})+h_{i}$ with $\|h_{i}\|_{\mathcal{X}^{\sigma,b,\delta}}\leq \delta^{\theta}$, provided $M$ is large enough ($\delta$ is small enough) depending on $T$ and $A$. Then we could use Picard iteration and the same bootstrap argument to prove that the original solution $u$ can be uniquely extended to $[t_{0},t_{0}+\delta]$ (and by symmetry, to the other side).

It remains to prove (\ref{approximate}) for $m$. First we know \[\lim_{j\to\infty}\big\|e^{-\mathrm{i}(t-t_{0})\mathbf{H}}u_{j}(t_{0})-e^{-\mathrm{i}(t-t_{0})\mathbf{H}}u(t_{0})+e^{-\mathrm{i}t\mathbf{H}}f_{2^{k_{j}}}^{\perp}(\omega))\big\|_{\mathcal{X}^{\sigma,b,[t_{0}-\delta,t_{0}+\delta]}}=0,\]
which is a consequence of (\ref{zhutou10}). In view of the induction hypothesis, we only need to prove \footnote[3]{Here we have used the following fact: given two intervals $[x,y]$ and $[z,w]$ with $x<z<y<w$, for some constant $C$ we have $\|u\|_{\mathcal{X}^{\sigma,b,[x,w]}}\leq C(\|u\|_{\mathcal{X}^{\sigma,b,[x,y]}}+\|u\|_{\mathcal{X}^{\sigma,b,[z,w]}})$. This is easily proved by using a partition of unity.}\[\lim_{j\to\infty}\big\|u_{j}-e^{-\mathrm{i}(t-t_{0})\mathbf{H}}u_{j}(t_{0})-(u-e^{-\mathrm{i}(t-t_{0})\mathbf{H}}u(t_{0}))\big\|_{\mathcal{X}^{\sigma,b,[t_{0}-\delta,t_{0}+\delta]}}=0,\] which, after a translation of time, is equivalent to\begin{equation}\label{equaivalentness}\lim_{j\to\infty}\big\|w_{j}-e^{-\mathrm{i}t\mathbf{H}}w_{j}(0)-(w-e^{-\mathrm{i}t\mathbf{H}}w(0))\big\|_{\mathcal{X}^{\sigma,b,\delta}}=0,\end{equation} where $w_{j}$ is a solution of the truncated equation with $k=k_{j}$, and $w_{j}(0)=u_{j}(t_{0})$; and $w$ is a solution of the original equation with $w(0)=u(t_{0})$. Write $w_{j}-w=h=h_{li}+h_{no}$, where \begin{equation}h_{li}=e^{-\mathrm{i}t\mathbf{H}}(u_{j}(t_{0})-u(t_{0}))=-e^{-\mathrm{i}(t+t_{0})\mathbf{H}}f_{2^{k_{j}}}^{\perp}(\omega)+e^{-\mathrm{i}t\mathbf{H}}\lambda_{j},\end{equation} with $\|\lambda_{j}\|_{\mathcal{H}^{\sigma}}\to 0$, and \setlength\arraycolsep{2pt}
\begin{eqnarray}h_{no}&=&\mp\mathrm{i}\int_{0}^{t}e^{-\mathrm{i}(t-s)\mathbf{H}}\big((|w_{j}|^{p-1}w_{j}-|w|^{p-1}w)_{2^{k_{j}}}^{\circ}-(|w|^{p-1}w)_{2^{k_{j}}}^{\perp}(s)\big)\,\mathrm{d}s\nonumber\\
&=&\label{equivalentness2}\mp\mathrm{i}\int_{0}^{t}e^{-\mathrm{i}(t-s)\mathbf{H}}(|w_{j}|^{p-1}w_{j}-|w|^{p-1}w)_{2^{k_{j}}}^{\circ}(s)\,\mathrm{d}s-(w-e^{-\mathrm{i}t\mathbf{H}}w(0))_{2^{k_{j}}}^{\perp}.\end{eqnarray} Now we need to prove $\|h_{no}\|_{\mathcal{X}^{\sigma,b,\delta}}\to 0$.  Since $w-e^{-\mathrm{i}t\mathbf{H}}w(0)\in \mathcal{X}^{\sigma,b,\delta}$, the second term in (\ref{equivalentness2}) tends to zero in $\mathcal{X}^{\sigma,b,\delta}$ as $j\to\infty$. For the first term, we estimate the norm without the final projection. The expression in the paranthesis can be written as a linear combination of terms like $z_{1}^{-}\cdots z_{p}^{-}$, where $z_{1}$ is either $h_{no}$, or $e^{-\mathrm{i}(t+t_{0})\mathbf{H}}f_{2^{k_{j}}}^{\perp}(\omega)$, or $e^{-\mathrm{i}t\mathbf{H}}\lambda_{j}$ which has $\mathcal{X}^{\sigma,b,\delta}$ norm $\to 0$. For $i\geq 2$, each $z_{i}$ is one of the following:

(1) $e^{-\mathrm{i}(t+t_{0})\mathbf{H}}f_{2^{k_{j}}}^{\circ}(\omega)$. This is within the applicability of Lemma \ref{longest2} since $\omega\not\in\Omega_{\frac{T}{M}}$.

(2) $w_{j}-e^{-\mathrm{i}(t+t_{0})\mathbf{H}}f_{2^{k_{j}}}^{\circ}(\omega)$. This has $\mathcal{X}^{\sigma,b,\delta}$ norm $\lesssim A$ since $w_{j}(t)=u_{j}(t+t_{0})$, due to (\ref{limit}).

(3) one of the components of $w_{j}-w$. These include $h_{no}$ and $e^{-\mathrm{i}(t+t_{0})\mathbf{H}}f_{2^{k_{j}}}^{\perp}(\omega)$, as well as another term with $\mathcal{X}^{\sigma,b,\delta}$ norm $\lesssim A$. Since $\omega\not\in\Omega_{\frac{T}{M}}$, these terms are controllable using Lemma \ref{longest2}.

If $z_{1}=e^{-\mathrm{i}(t+t_{0})\mathbf{H}}f_{2^{k_{j}}}^{\perp}(\omega)$, then from Proposition \ref{linear} and Lemma \ref{longest2}, the corresponding term tends to $0$ as $j\to\infty$ (since $h_{no}$ is bounded in $\mathcal{X}^{\sigma,b,\delta}$ independent of $j$; see below). If $z_{1}$ is the term with $\mathcal{X}^{\sigma,b,\delta}$ tending to $0$, the same conclusion holds. If $z_{1}=h_{no}$, then the norm of the corresponding term is bounded by $\delta^{\theta}\|h_{no}\|_{\mathcal{X}^{\sigma,b,\delta}}(\|h_{no}\|_{\mathcal{X}^{\sigma,b,\delta}}+A)^{p-1}$. Therefore we have\[\|h_{no}\|_{\mathcal{X}^{\sigma,b,\delta}}\lesssim \delta^{\theta}\|h_{no}\|_{\mathcal{X}^{\sigma,b,\delta}}(\|h_{no}\|_{\mathcal{X}^{\sigma,b,\delta}}+A)^{p-1}+o(1),\] as $j\to\infty$. By (\ref{limit}) and the Picard argument above, we know $\|h_{no}\|_{\mathcal{X}^{\sigma,b,\delta}}\lesssim A$ independent of $j$. Therefore, if we choose $\delta$ small enough ($M$ large enough), we must have $\|h_{no}\|_{\mathcal{X}^{\sigma,b,\delta}}=o(1)$.

The proof when $1<p<3$ is basically the same, using linear estimates (Corollary \ref{prob}) instead of Proposition \ref{longest}. We will also need a variant of Lemma \ref{longest2}, but the proof of this is not hard and is essentially contained in Proposition \ref{lin2} and Corollary \ref{prob}.
\end{proof}

\section{Global well-posedness}\label{gwp}

In what follows, we fix a sufficiently large $T$ , and a positive integer $M$ so that $M\gtrsim T^{2}$.

First let us consider the truncated equation (\ref{trc}), which is  an ODE on the finite dimensional space $V_{2^{k}}$. If we identify $V_{2^{k}}$ with $\mathbb{R}^{2^{k+1}+2}$ by the coordinates \begin{equation}g=\sum_{j=0}^{2^{k}}(a_{j}+\mathrm{i}b_{j})e_{j},\end{equation} then it is easy to check that (\ref{trc}) becomes\begin{equation}\partial_{t}a_{j}=\frac{\partial E}{\partial b_{j}},\,\,\,\,\partial_{t}b_{j}=-\frac{\partial E}{\partial a_{j}},\end{equation} with Hamiltonian\begin{equation}E_{0}(a_{j},b_{j})=\sum_{j=0}^{2^{k}}(2j+1)(a_{j}^{2}+b_{j}^{2})\pm\frac{1}{p+1}\bigg\|\sum_{j=0}^{2^{k}}(a_{j}+\mathrm{i}b_{j})e_{j}\bigg\|_{L^{p+1}}^{p+1}.\end{equation}If we denote the solution flow of this equation by $\Phi_{2^{k},t}$, then the following is true by the theory of Hamiltonian ODEs: the map $(t,x)\mapsto\Phi_{2^{k},t}(x)$ is defined on an open subset of $\mathbb{R}\times V_{2^{k}}$. For each $t\in\mathbb{R}$, $\Phi_{2^{k},t}$ is a homeomorphism between two open sets $\mathcal{O}_{t}$ and $\mathcal{R}_{t}$ of $V_{2^{k}}$. If $p\geq 3$ is odd, it is a diffeomorphism and preserves the quantities  \begin{equation}\label{preservedquan}\|g\|_{L^{2}}^{2}=\sum_{j=0}^{2^{k}}(a_{j}^{2}+b_{j}^{2})\,\,\,\,\,\,\,\mathrm{and}\,\,\,\,\,\,\,E=2E_{0}\end{equation}and the Lebesgue measure. If $1<p<3$, it (and its inverse) can be approximated, uniformly on each compact subset of $\mathcal{O}_{t}$ and $\mathcal{R}_{t}$, by a sequence of pairs of diffeomorphisms, which preserve the quantities (\ref{preservedquan}) and the Lebesgue measure. Therefore $\Phi_{2^{k},t}$ itself also preserves (\ref{preservedquan}) and the Lebesgue measure.

From above we know that $\Phi_{2^{k},t}$ (viewed as a map between $\mathcal{O}_{t}$ and $\mathcal{R}_{t}$) preserves the measure \begin{equation}\nu_{2^{k}}^{\circ}=\pi^{-1-2^{k}}\zeta\cdot e^{-E}\prod_{j=0}^{2^{k}}\mathrm{d}a_{j}\mathrm{d}b_{j}\end{equation}on $V_{2^{k}}$, where $\zeta=1$ in the defocusing case, and $\zeta=\chi(\|g\|_{L^{2}}^{2}-\alpha_{2^{k}})$ in the focusing case as in (\ref{cutoff}). By the definition of $\mu$ and $\nu_{2^{k}}$ (see Section \ref{gib}) we have\begin{equation}\nu_{2^{k}}=(\rho_{2^{k}}\cdot\mu_{2^{k}}^{\circ})\otimes\mu_{2^{k}}^{\perp}=\nu_{2^{k}}^{\circ}\otimes\mu_{2^{k}}^{\perp},\end{equation} in both cases, where we understand that $\mu_{2^{k}}^{\circ}$ and $\mu_{2^{k}}^{\perp}$ are measures on $V_{2^{k}}$ and $V_{2^{k}}^{\perp}$ respectively, and identify $V$ with\footnote[4]{Here $V$ is some space on which $\mu$ is supported. The exact choice of $V$ is unimportant; for example, we may choose $V=\mathcal{S}'(\mathbb{R}^{2})$, or $V=\cap_{\delta>0}\mathcal{H}^{-\delta}(\mathbb{R}^{2})$.} $V_{2^{k}}\times V_{2^{k}}^{\perp}$. From this we immediately see, for each Borel set $J$ of $V_{2^{k}}$, that\begin{equation}\label{recur0}\nu_{2^{k}}\big(\{g:g_{2^{k}}^{\circ}\in J\cap\mathcal{R}_{t}\}\big)=\nu_{2^{k}}\big(\{g:g_{2^{k}}^{\circ}\in(\Phi_{2^{k},t})^{-1}(J)\}\big).\end{equation} 

Now we fix the choice \[J=J_{M}=\big\{g_{2^{k}}^{\circ}:g\in f\big(\Omega_{\frac{T}{M}}^{c}\big)\big\}^{c},\] where $\Omega_{\frac{T}{M}}$ is constructed in the proof of Lemma \ref{longest2}. Consider the maximal $m_{0}\leq M+1$ so that the equation (\ref{trc}) has a solution $u$ on $[-\frac{m_{0}T}{M},\frac{m_{0}T}{M}]$ which satisfies \begin{equation}\label{partialintegral}\big\|u-e^{-\mathrm{i}(t-\frac{mT}{M})\mathbf{H}}u\big(\frac{mT}{M}\big)\big\|_{\mathcal{X}^{\sigma,b,[\frac{(m-1)T}{M},\frac{(m+1)T}{M}]}}\leq 1,\end{equation} for $|m|\leq m_{0}-1$. If $m_{0}=M+1$, from Proposition \ref{linear} we know that $u$ is defined on $[-T,T]$ and\begin{equation}\label{partialsum}\|u-e^{-\mathrm{i}t\mathbf{H}}f_{2^{k}}^{\circ}(\omega)\|_{\mathcal{X}^{\sigma,b,T}}\lesssim M^{3}.\end{equation} If $m_{0}\leq M$, then for some choice of $\pm$ sign, we have that $f_{2^{k}}^{\circ}(\omega)\in\mathcal{O}_{\pm\frac{ m_{0}T}{M}}$ and $\Phi_{2^{k},\pm\frac{ m_{0}T}{M}}(f_{2^{k}}^{\circ}(\omega))\in J_{M}$. In fact, if this fails, we can use Proposition \ref{defolwp} to extend the solution to $[-\frac{(m_{0}+1)T}{M},\frac{(m_{0}+1)T}{M}]$ with (\ref{partialintegral}) remaining true, thus contradicting the definition of $m_{0}$. Now we use (\ref{recur0}) and sum over $m_{0}\leq M$ to get
\begin{equation}(\nu_{2^{k}}\circ f)\big(\big\{\omega:(\ref{partialsum})\,\,\mathrm{fails}\big\}\big)\lesssim M\cdot\nu_{2^{k}}\big(\big\{g:g_{2^{k}}^{\circ}\in J_{M}\big\}\big).\end{equation}

In the defocusing case we have $\nu_{2^{k}}\leq \mu$. Using Fubini's theorem we get\begin{equation}\label{recur3}\mu\big(\big\{g:g_{2^{k}}^{\circ}\in J_{M}^{c}\big\}\big)\geq\mu\big(f\big(\Omega_{\frac{T}{M}}^{c}\big)\big)\geq 1-c_{1}e^{-c_{2}T^{-c_{3}}M^{c_{3}}},\end{equation} hence \begin{equation}\label{recur4}(\nu_{2^{k}}\circ f)\big(\big\{\omega:(\ref{partialsum})\,\,\mathrm{fails}\big\}\big)\lesssim c_{1}Me^{-c_{2}T^{-c_{3}}M^{c_{3}}}.\end{equation}

In the focusing case we have \begin{equation}\frac{\mathrm{d}\nu_{2^{k}}}{\mathrm{d}\mu}(g)=\rho_{2^{k}}(g)=\chi(\|g_{2^{k}}^{\circ}\|_{L^{2}}^{2}-\alpha_{2^{k}})\exp\bigg(\frac{2}{p+1}\|g_{2^{k}}^{\circ}\|_{L^{p+1}}^{p+1}\bigg).\end{equation} This function, by Proposition \ref{intg}, has bounded $L^{2}(\mu)$ norm, so by Cauchy-Schwartz we get\begin{equation}\nu_{2^{k}}\big(\big\{g:g_{2^{k}}^{\circ}\in J_{M}\big\}\big)\lesssim\bigg(\mu\big(\big\{g:g_{2^{k}}^{\circ}\in J_{M}\big\}\big)\bigg)^{\frac{1}{2}}\leq c_{1}e^{-c_{2}T^{-c_{3}}M^{c_{3}}},\end{equation} which again implies (\ref{recur4}). We summarize our results in the following proposition.

\begin{proposition}\label{temp}
For fixed $T$ and $k$, there exists a subset\footnote[5]{This should not be confused with the $\Omega_{T}$ notation defined above, since our $\Omega_{k}$ is for $k\geq 1$ here!} $\Omega_{k}\subset\Omega$ so that $(\nu_{2^{k}}\circ f)(\Omega_{k}^{c})=0$, and for $\omega\in\Omega_{k}$, (\ref{trc}) has a unique solution $u_{k}$ on $[-T,T]$, and that\begin{equation}\label{integrand}\sup_{k}\int_{\Omega_{k}}\exp\big(\|u_{k}-e^{-\mathrm{i}t\mathbf{H}}f_{2^{k}}^{\circ}(\omega)\|_{\mathcal{X}^{\sigma,b,T}}^{\theta}\big)\,\mathrm{d}(\nu_{2^{k}}\circ f)(\omega)<\infty,\end{equation} for some $\theta>0$.
\end{proposition}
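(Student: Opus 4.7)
The plan is to assemble the invariance and tail-bound machinery already developed. Existence and uniqueness of $u_{k}$ is essentially automatic in finite dimensions, and the exponential moment bound follows by optimizing the parameter $M$ appearing in \eqref{recur4} and integrating via the layer-cake formula.

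First, for existence and uniqueness, I would observe that \eqref{trc} with $1\leq k<\infty$ is a Hamiltonian ODE on the finite-dimensional space $V_{2^{k}}$ with a polynomial, hence locally Lipschitz, right-hand side. A direct calculation using that $(\cdot)_{2^{k}}^{\circ}$ is self-adjoint on $V_{2^{k}}$ shows $\|u_{k}(t)\|_{L^{2}}$ is conserved; since $\|\cdot\|_{L^{2}}$ is a complete norm on a finite-dimensional space, the solution stays bounded and is global, and Picard--Lindel\"of gives uniqueness. So $\Omega_{k}$ can be taken to be a full-measure subset of $\Omega$ (those $\omega$ for which $f_{2^{k}}^{\circ}(\omega)\in V_{2^{k}}$ is defined coordinate-wise).

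Next I would point out that the iteration carried out from \eqref{recur0} through \eqref{recur4} is valid for \emph{every} integer $M\geq M_{0}(T)\gtrsim T^{2}$, with all constants depending on $T$ and on the fixed parameters $\sigma,b,p$, but \emph{not} on $k$. Writing $N_{k}(\omega)=\|u_{k}-e^{-\mathrm{i}t\mathbf{H}}f_{2^{k}}^{\circ}(\omega)\|_{\mathcal{X}^{\sigma,b,T}}$, this reads
\[
(\nu_{2^{k}}\circ f)\big(\{N_{k}>CM^{3}\}\big)\leq c_{1}Me^{-c_{2}T^{-c_{3}}M^{c_{3}}}.
\]
Optimizing by setting $M=\lfloor(A/C)^{1/3}\rfloor$ for large $A$ yields the $k$-uniform tail estimate
\[
(\nu_{2^{k}}\circ f)\big(\{N_{k}>A\}\big)\lesssim A^{1/3}\exp\!\big(-c_{2}T^{-c_{3}}A^{c_{3}/3}\big).
\]

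To conclude, I would choose $\theta>0$ small enough that $\theta<c_{3}/3$ and apply the layer-cake formula to $\int e^{N_{k}^{\theta}}\,\mathrm{d}(\nu_{2^{k}}\circ f)$. The contribution of $\{N_{k}\leq A_{0}(T)\}$ is trivially bounded, while the tail integral
\[
\int_{A_{0}(T)}^{\infty}\theta A^{\theta-1}e^{A^{\theta}}\,(\nu_{2^{k}}\circ f)\big(\{N_{k}>A\}\big)\,\mathrm{d}A
\]
converges because the super-polynomial factor $\exp(-c_{2}T^{-c_{3}}A^{c_{3}/3})$ dominates $e^{A^{\theta}}$ under the assumption $\theta<c_{3}/3$. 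Since none of the constants depend on $k$, the supremum in \eqref{integrand} is finite. The only subtle point is verifying that the tail in \eqref{recur4} really is of stretched-exponential form with a positive power of $M$ in the exponent, and that the loss in $N_{k}$ is polynomial in $M$ (here $M^{3}$, coming from iterating the linear propagation bound \eqref{glb}); both are in hand from the preceding analysis, so the argument reduces to the bookkeeping above.
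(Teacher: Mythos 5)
Your proposal is correct and follows essentially the same route as the paper: the $k$-uniform tail bound coming from the measure-invariance iteration \eqref{recur0}--\eqref{recur4} together with the $M^{3}$ loss from \eqref{glb}, the choice $M\sim A^{1/3}$, and the conversion of the resulting stretched-exponential tail into the exponential moment for $\theta$ small. The only cosmetic differences are that the paper defines $\Omega_{k}$ through the sets on which \eqref{partialsum} holds rather than invoking global ODE well-posedness outright (which indeed follows from $L^{2}$ conservation, though for $1<p<3$ the nonlinearity is locally Lipschitz rather than polynomial), and it compresses your layer-cake bookkeeping into a one-line uniform-integrability remark.
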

\begin{proof} We choose\[\Omega_{k}=\cap_{M\gtrsim T^{2}}Z_{M}:=\bigcap_{M\gtrsim T^{2}}\big\{\omega:(\ref{partialsum})\,\,\mathrm{fails}\,\,\mathrm{for}\,\,M\big\}.\] From the above discussion we easily see $\nu_{2^{k}}(f(\Omega_{k}^{c}))\leq\lim_{M\to\infty}\nu_{2^{k}}(f(Z_{M}))=0$. Also for $\omega\not\in Z_{M}$ the solution $u_{k}$ to (\ref{trc}) exists and is unique, and satisfies\[\|u_{k}-e^{-\mathrm{i}t\mathbf{H}}f_{2^{k}}^{\circ}(\omega)\|_{\mathcal{X}^{\sigma,b,T}}\lesssim M^{3}.\]In other words we have \[(\nu_{2^{k}}\circ f)\big(\omega\in\Omega_{k}:\|u_{k}-e^{-\mathrm{i}t\mathbf{H}}f_{2^{k}}^{\circ}(\omega)\|_{\mathcal{X}^{\sigma,b,T}}>A\big)\leq\nu_{2^{k}}(f(Z_{M}))\leq c_{1}e^{-c_{2}A^{c_{3}}},\] for all $A>T^{100}$, where $M\sim A^{\frac{1}{3}}$ is an integer. Since $\nu_{2^{k}}\circ f$ is uniformly integrable, the part with small $A$ is also under control. The claim then follows.
\end{proof}

With Propositions \ref{appppp} and \ref{temp}, we are now ready to prove the global well-posedness part of Theorem \ref{main}. Denote the integrand in (\ref{integrand}) by $\eta_{k}(\omega)$, understanding $\eta_{k}(\omega)=0$ when $\omega\not\in\Omega_{k}$. Since $\nu_{2^{k}}\to\nu$ in the strong sense and $(\nu_{2^{k}}\circ f)(\Omega_{k}^{c})=0$, we have $(\nu\circ f)(\Omega_{k}^{c})\to 0$, and we fix  a subsequence $\{k_{l}\}$ so that $\sum_{l}(\nu\circ f)(\Omega_{k_{l}}^{c})<\infty$ and hence $(\nu\circ f)(\limsup_{l\to\infty}\Omega_{k_{l}}^{c})=0$. From Proposition \ref{temp}, we get
\begin{equation}\sup_{l}\int_{\Omega}\rho_{2^{k_{l}}}(f(\omega))\eta_{k_{l}}(\omega)\,\mathrm{d}\mathbb{P}(\omega)<\infty.\end{equation} From the proof of Proposition \ref{intg}, we see $\rho_{2^{k_{l}}}\circ f\to\rho\circ f$ almost surely, so by Fatou's lemma we get\begin{equation}\label{holds}\liminf_{l\to\infty}\eta_{k_{l}}(\omega)<\infty,\end{equation} a.s. in $\mathbb{P}$, on the set where $\rho(f(\omega))\neq 0$. By the definition of $\eta_{k}$, if (\ref{holds}) holds, then either $\omega\in\Omega_{k_{l}}$ for infinitely many $l$, or there exists a subsequence $\{k_{l_{j}}\}_{j\geq 0}\uparrow\infty$, such that (\ref{trc}) has a unique solution $u_{k_{l_{j}}}$ for $k=k_{j}$ on $[-T,T]$, and\[\sup_{j}\|u_{k_{l_{j}}}-e^{-\mathrm{i}t\mathbf{H}}f_{2^{k_{l_{j}}}}^{\circ}(\omega)\|_{\mathcal{X}^{\sigma,b,T}}<\infty.\] In the former case we get a null set (actually a set with null $\nu\circ f$ measure, but $\nu\circ f$ is mutually absolutely continuous with $\mathbb{P}$ on the set where $\rho(f(\omega))\neq 0$), while in the latter case we can use Proposition \ref{appppp} to deduce that, except for another null set, (\ref{nls22}) also has a unique solution $u$ on $[-T,T]$ so that $u\in e^{-\mathrm{i}t\mathbf{H}}f(\omega)+\mathcal{X}^{\sigma,b,T}$.

Therefore, for each $T>0$, except for a null set, the equation (\ref{nls22}) has a unique solution $u\in e^{-\mathrm{i}t\mathbf{H}}f(\omega)+\mathcal{X}^{\sigma,b,T}$, for $\omega$ in the support of $\rho\circ f$. In the defocusing case, this support itself has full probability in $\Omega$; in the focusing case, it follows from Proposition \ref{intg} that we can choose a countable number of cutoff $\chi$, so that the (countable) union of the support of the corresponding $\rho\circ f$ has full probability. In any case we have found a subset of $\Omega$ having full probability, such that when $\omega$ does belong to this set, (\ref{nls22}) has a unique solution $u\in e^{-\mathrm{i}t\mathbf{H}}f(\omega)+\mathcal{X}^{\sigma,b,T}$. We then take another countable union to get that, almost surely in $\mathbb{P}$, equation (\ref{nls22}) has a unique solution $u$ on $\mathbb{R}\times\mathbb{R}^{2}$ such that\setlength\arraycolsep{2pt}
\begin{eqnarray}u\in e^{-\mathrm{i}t\mathbf{H}}f(\omega)+\mathcal{X}^{\sigma,b,T}&\subset &e^{-\mathrm{i}t\mathbf{H}}f(\omega)+\mathcal{C}([-T,T],\mathcal{H}^{\sigma}(\mathbb{R}^{2}))\nonumber\\
&\subset&\mathcal{C}([-T,T],\cap_{\delta>0}\mathcal{H}^{-\delta}(\mathbb{R}^{2}))\nonumber,\end{eqnarray} for all $T>0$. This completes the proof.
\begin{remark}
In fact, from the above argument we can extract a polynomial bound on the solution; namely we can prove that for each large $A$, with exceptional probability $\leq c_{1}e^{-c_{2}A^{c_{3}}}$ we have\[\|u-e^{-\mathrm{i}t\mathbf{H}}f(\omega)\|_{\mathcal{X}^{\sigma,b,T}}\leq A\langle T\rangle ^{C},\] for all $T>0$, with some constant $C$. We omit the details.
\end{remark}

\section{Transforming into NLS without harmonic potential}\label{lens0}
As we have mentioned before, the idea of introducing the lens transform and reducing (\ref{nls1}) to (\ref{nls2}) is inspired by the arguments in \cite{BTT10}. First we define the lens transform (\cite{Ta09}, Section 2; \cite{BTT10}, Section 10)\begin{equation}\label{lens}\mathcal{L}u(t,x)=\frac{1}{\cos(2t)}u\big(\frac{\tan(2t)}{2},\frac{x}{\cos(2t)}\big)e^{-\frac{\mathrm{i}|x|^{2}\tan(2t)}{2}},\end{equation} where $u$ is defined on $\mathbb{R}\times\mathbb{R}^{2}$, and $\mathcal{L}u$ is defined on $\big(-\frac{\pi}{4},\frac{\pi}{4}\big)\times\mathbb{R}^{2}$. By a simple computation we deduce\begin{equation}\label{conjugate0}(\mathrm{i}\partial_{t}-\mathbf{H})(\mathcal{L}u)(t,x)=(\cos(2t))^{-2}\mathcal{L}((\mathrm{i}\partial_{t}+\Delta)u)(t,x).\end{equation} For the inverse transform \begin{equation}\label{inverse}\mathcal{L}^{-1}u(t,x)=(1+4t^{2})^{-\frac{1}{2}}u\big(\frac{\tan^{-1}(2t)}{2},(1+4t^{2})^{-\frac{1}{2}}x\big)e^{\mathrm{i}\frac{|x|^{2}t}{1+4t^{2}}},\end{equation} we have \begin{equation}\label{conjugate}(\mathrm{i}\partial_{t}+\Delta)(\mathcal{L}^{-1}u)(t,x)=\frac{1}{1+4t^{2}}\mathcal{L}^{-1}((\mathrm{i}\partial_{t}-\mathbf{H})u)(t,x).\end{equation} 

Next we prove that the transform $\mathcal{L}^{-1}$ maps the space $\mathcal{X}^{\sigma,b,\delta}$ to $X^{\sigma,b,T}$, where $0\leq\sigma,b\leq 1$, $0<\delta<\frac{\pi}{4}$, and $T=\frac{1}{2}\tan(2\delta)$. First by using a cutoff, we are reduced to proving that $u\mapsto \mathcal{L}^{-1}(\chi\cdot u)$ is bounded from $\mathcal{X}^{\sigma,b}$ to $X^{\sigma,b}$, where $\chi=\chi(t)$ is any smooth function having compact support in $|t|<\frac{\pi}{4}$. First we fix $\sigma$. By interpolation, we can assume $b\in\{0,1\}$. If we can prove the result in the case $b=0$, then using the identity\[\|u\|_{\mathcal{X}^{\sigma,1}}^{2}=\|u\|_{\mathcal{X}^{\sigma,0}}^{2}+\|(\mathrm{i}\partial_{t}-\mathbf{H})\|_{\mathcal{X}^{\sigma,0}}^{2}\] (which remains true with $\mathcal{X}$ replaced by $X$ and $-\mathbf{H}$ replaced by $\Delta$) and (\ref{conjugate}), we see\begin{equation}\label{add1}\|\mathcal{L}^{-1}(\chi\cdot u)\|_{X^{\sigma,1}}\lesssim\|u\|_{\mathcal{X}^{\sigma,0}}+\|(\mathrm{i}\partial_{t}-\mathbf{H})(\chi u)\|_{\mathcal{X}^{\sigma,0}},\end{equation} because $v=(\mathrm{i}\partial_{t}-\mathbf{H})(\chi u)$ has compact support in $|t|<\frac{\pi}{4}$, and hence equals $\chi_{1}v$ for some other $\chi_{1}$. Since the last term in (\ref{add1}) is clearly controlled by $\|u\|_{\mathcal{X}^{\sigma,1}}$, we can conclude the proof for $b=1$. Therefore we may only consider $b=0$. Here it is easily seen that we only need to prove that multiplication by $e^{\mathrm{i}\lambda|x|^{2}}$ is uniformly bounded from $\mathcal{H}^{\sigma}$ to $H^{\sigma}$ for $0\leq\sigma\leq 1$ and $|\lambda|\leq 1$. By another interpolation we may further reduce to $\sigma\in\{0,1\}$. The $\sigma=0$ case is obvious; the $\sigma=1$ case follows from the observation\[\nabla(e^{\mathrm{i}\lambda |x|^{2}}\cdot f)=e^{\mathrm{i}\lambda |x|^{2}}(\nabla+2\mathrm{i}\lambda x)\cdot f.\] Thus we have the desired bound for all $0\leq\sigma,b\leq 1$.

Using (\ref{conjugate0}) or (\ref{conjugate}) we can compute that $u$ is a solution for the Cauchy problem (\ref{main000}) on $\mathbb{R}$, if and only if $v=\mathcal{L}u$ is a solution for the Cauchy problem \begin{equation}\label{nls44}
\left\{
\begin{array}{ll}
\mathrm{i}\partial_{t}v+(\Delta-|x|^{2})v=(\cos(2t))^{p-3}\cdot|v|^{p-1}v\\
v(0)=f(\omega)
\end{array}
\right.
\end{equation} on $|t|<\frac{\pi}{4}$. Moreover, if $v-e^{-\mathrm{i}t\mathbf{H}}f(\omega)\in\mathcal{X}^{\sigma,b,\delta}$ with $\delta<\frac{\pi}{4}$, then from the above discussion we see that \[u-\mathcal{L}^{-1}(e^{-\mathrm{i}t\mathbf{H}}f(\omega))\in X^{\sigma,b,T}\] with $T=\frac{1}{2}\tan(2\delta)\to\infty$ as $\delta\to\frac{\pi}{4}$. From (\ref{conjugate}) we see $\mathcal{L}^{-1}(e^{-\mathrm{i}t\mathbf{H}}f(\omega))$ has initial value $f(\omega)$ and annihilates $\mathrm{i}\partial_{t}+\Delta$, thus it must be $e^{\mathrm{i}t\Delta}f(\omega)$. This proves (\ref{moresmooth}). Also from (\ref{inverse}), the constants in the $\mathcal{H}_{x}^{\sigma}\to H_{x}^{\sigma}$ boundedness remains under control even near the boundary points $\pm\frac{\pi}{4}$. Thus (\ref{scatter}) will follow if (again, note that $\mathcal{L}$ conjugates the propagtors $e^{\mathrm{i}t\Delta}$ and $e^{-\mathrm{i}t\mathbf{H}}$)\begin{equation}\label{goodresult}\lim_{t\to\pm\frac{\pi}{4}}\big(v(t)-e^{-\mathrm{i}t\mathbf{H}}f(\omega)\big)\,\,\,\,\,\,\,\mathrm{exists}\,\,\mathrm{in}\,\,\mathcal{H}^{\sigma}.\end{equation}

What we will prove is that a.s. in $\mathbb{P}$, (\ref{nls44}) has a unique (strong) solution $v$ for $|t|\leq\frac{\pi}{4}$ so that $v-e^{-\mathrm{i}t\mathbf{H}}f(\omega)\in\mathcal{X}^{\sigma,b,\frac{\pi}{4}}$. As is demonstrated above, this implies (\ref{goodresult}) and hence Theorem \ref{main2}.

The proof is basically the same as (\ref{nls22}). Noticing $m(t)=(\cos(2t))^{p-3}$ has all its derivatives bounded on $\mathbb{R}$, we see that multiplication by $m(t)$ is bounded from any $\mathcal{X}^{\sigma,b}$ (and hence any $\mathcal{X}^{\sigma,b,T}$) to itself. Therefore, the proof from Proposition \ref{longest} to Lemma \ref{longest2} goes without any difficulty, as if this additional factor were not present. In the proof of Proposition \ref{appppp}, when we extend the solution to a larger interval, we must solve another Cauchy problem, which is no longer (\ref{nls44}), since this equation is not autonomous. This, however, is not a problem; since we just replace $m(t)$ by some $m(t-t_{0})$ which obeys the same derivative estimates as $m(t)$, we can use the \emph{same} exceptional set as in Proposition \ref{longest}, Lemma \ref{longest2} and Proposition \ref{appppp}, and the other discussions remain unchanged.

The only difficulty we face is the lack of a (formally) invariant measure. This is compensated, however, by a monotonicity property, which was first observed in \cite{BTT10}.
\begin{lemma}\label{mono} Consider the truncated Cauchy problem \begin{equation}\label{nls55}
\left\{
\begin{array}{ll}
\mathrm{i}\partial_{t}v+(\Delta-|x|^{2})v=(\cos(2t))^{p-3}\cdot(|v|^{p-1}v)_{2^{k}}^{\circ}\\
v(0)=f_{2^{k}}^{\circ}(\omega)
\end{array}
\right.
\end{equation} then for its solution $v$, the quantity\[\mathcal{E}(t,v(t))=\langle \mathbf{H}v,v\rangle+\frac{2(\cos(2t))^{p-3}}{p+1}\|v\|_{L^{p+1}}^{p+1}\] is monotonically nonincreasing in $|t|$, for $|t|\leq\frac{\pi}{4}$.
\end{lemma}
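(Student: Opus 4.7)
The plan is the standard energy-method calculation: since $v(t) \in V_{2^{k}}$ is finite-dimensional and the ODE is smooth in $t$, we may differentiate $\mathcal{E}(t,v(t))$ directly and check that the resulting expression has a definite sign on $[0,\pi/4)$ (and the opposite sign on $(-\pi/4,0]$), which is equivalent to monotonicity in $|t|$.

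First I would compute $\frac{d}{dt}\langle \mathbf{H}v,v\rangle$. Using that $\mathbf{H}$ is real and self-adjoint, this equals $2\,\mathrm{Re}\,\int \mathbf{H}v\cdot\overline{\partial_{t}v}$. The equation (\ref{nls55}) gives $\partial_{t}v=-\mathrm{i}\mathbf{H}v-\mathrm{i}(\cos(2t))^{p-3}(|v|^{p-1}v)_{2^{k}}^{\circ}$. Plugging in, the $\mathbf{H}v$ contribution gives a purely imaginary integrand and drops after taking real parts, so
\[
\tfrac{d}{dt}\langle \mathbf{H}v,v\rangle = -2(\cos(2t))^{p-3}\,\mathrm{Im}\int \mathbf{H}v\cdot \overline{(|v|^{p-1}v)_{2^{k}}^{\circ}}.
\]
Because $v\in V_{2^{k}}$, also $\mathbf{H}v\in V_{2^{k}}$, and self-adjointness of the projection $\mathcal{P}_{2^{k}}$ lets me drop the ${}_{2^{k}}^{\circ}$: the last integral equals $\int \mathbf{H}v\cdot |v|^{p-1}\bar v$. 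Next I would compute $\frac{d}{dt}\|v\|_{L^{p+1}}^{p+1}=(p+1)\,\mathrm{Re}\int |v|^{p-1}\bar v\,\partial_{t}v$. Plugging in $\partial_{t}v$ again, and observing that $\int |v|^{p-1}\bar v\cdot(|v|^{p-1}v)_{2^{k}}^{\circ}=\|(|v|^{p-1}v)_{2^{k}}^{\circ}\|_{L^{2}}^{2}\in\mathbb{R}$ (by the same self-adjointness trick, since $|v|^{p-1}v$ pairs with a $V_{2^{k}}$ element), only the $\mathbf{H}v$ piece survives and
\[
\tfrac{d}{dt}\|v\|_{L^{p+1}}^{p+1}=(p+1)\,\mathrm{Im}\int |v|^{p-1}\bar v\,\mathbf{H}v.
\]

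Combining these two identities with the explicit $t$-derivative of the prefactor $\frac{2(\cos(2t))^{p-3}}{p+1}$ produces exact cancellation of the $\mathrm{Im}\int\mathbf{H}v\cdot|v|^{p-1}\bar v$ terms, and leaves
\[
\tfrac{d}{dt}\mathcal{E}(t,v(t)) = -\tfrac{4(p-3)}{p+1}\,\sin(2t)\,(\cos(2t))^{p-4}\,\|v\|_{L^{p+1}}^{p+1}.
\]
Since $p\geq 3$, the coefficient $(p-3)/(p+1)\geq 0$ and $(\cos(2t))^{p-4}\geq 0$ on $(-\pi/4,\pi/4)$, so the sign of $\frac{d}{dt}\mathcal{E}$ is opposite to that of $\sin(2t)$. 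Hence $\mathcal{E}$ is nondecreasing on $(-\pi/4,0]$ and nonincreasing on $[0,\pi/4)$, i.e.\ nonincreasing in $|t|$.

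The only real subtlety is the algebraic bookkeeping of the projection $\mathcal{P}_{2^{k}}$ appearing inside the nonlinearity; I do not expect any analytic obstacle because all relevant quantities are finite sums of smooth functions of $t$. The key structural fact being used is that projecting the nonlinearity onto $V_{2^{k}}$ does not spoil the Hamiltonian cancellation precisely because the trajectory $v(t)$ remains in $V_{2^{k}}$, which is what makes $\mathbf{H}v$ and $v$ orthogonal to $V_{2^{k}}^{\perp}$ pieces of the nonlinearity.
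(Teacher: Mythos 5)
Your proposal is correct and is essentially the paper's own argument: the paper simply records the direct computation $\frac{\mathrm{d}\mathcal{E}}{\mathrm{d}t}=-\frac{2(p-3)(\cos(2t))^{p-4}\sin(2t)}{p+1}\|v(t)\|_{L^{p+1}}^{p+1}$ and reads off the sign, while you supply the details (self-adjointness of $\mathbf{H}$ and of the projection onto $V_{2^{k}}$, which is exactly why the truncation does not destroy the cancellation). The only discrepancy is the harmless numerical constant (your chain-rule factor $4$ versus the paper's $2$), which does not affect the sign or the conclusion.
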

\begin{proof}
We directly compute\[\frac{\mathrm{d}\mathcal{E}}{\mathrm{d}t}=-\frac{2(p-3)(\cos(2t))^{p-4}\sin(2t)}{p+1}\|v(t)\|_{L^{p+1}}^{p+1},\] which is nonpositive for $0\leq t\leq\frac{\pi}{4}$, and nonnegative for $-\frac{\pi}{4}\leq t\leq 0$.
\end{proof}
We argue as in Section \ref{gwp}, but we fix $T=\frac{\pi}{4}$ here. If we could prove \begin{equation}\label{recur1}\mu\big(\{g:g_{2^{k}}^{\circ}\in J\}\big)\geq\nu_{2^{k}}\big(\{g:\Phi_{2^{k},t}(g_{2^{k}}^{\circ})\in J\}\big)\end{equation} for $-\frac{\pi}{4}\leq t\leq \frac{\pi}{4}$, where, of course, $\Phi_{2^{k},t}$ is now the solution flow of (\ref{nls44}), then combining this inequality with (\ref{recur3}) we can get (\ref{recur4}). Starting from this point, we can follow the argument in Section \ref{gwp} word by word to get a.s. global well-posedness of (\ref{nls44}) on $[-\frac{\pi}{4},\frac{\pi}{4}]$.

The proof of (\ref{recur1}) is also simple.  By Lemma \ref{mono} \setlength\arraycolsep{2pt}
\begin{eqnarray}
\nu_{2^{k}}\big(\{g:\Phi_{2^{k},t}(g_{2^{k}}^{\circ})\in J\}\big)&=&\pi^{-1-2^{k}}\int_{J_{1}}e^{-E(g)}\prod_{j=0}^{2^{k}}\mathrm{d}a_{j}\mathrm{d}b_{j}\nonumber\\&\leq &\pi^{-1-2^{k}}\int_{J_{1}}e^{-\mathcal{E}(t,g(t))}\prod_{j=0}^{2^{k}}\mathrm{d}a_{j}\mathrm{d}b_{j}\nonumber\\ &\leq &\int_{J}\,\mathrm{d}\mu_{2^{k}}^{\circ}=\mu\big(\{g:g_{2^{k}}^{\circ}\in J\}\big),\nonumber
\end{eqnarray} where $J_{1}=\{h\in V_{2^{k}}:\Phi_{2^{k},t}(h)\in J\}$. Here we have used the invariance of the Lebesgue measure under $\Phi_{2^{k},t}$, which can be directly verified (see \cite{BTT10}, Lemma 8.3). Therefore we have completed the proof of Theorem \ref{main2}.
\section{Invariance of Gibbs measure}\label{inv}
Now we return to the final assertion of Theorem \ref{main}, and prove the invariance of the Gibbs measure $\nu$ under the solution flow of (\ref{nls22}). More precisely, we have\begin{proposition}
Denote the solution flow of (\ref{nls22}) by $\Phi_{t}$. There exists a subset $\Sigma\subset\mathcal{S}'(\mathbb{R}^{2})$ such that it has full $\mu$ measure, and $\Phi_{t}$ becomes a one-parameter group from $\Sigma$ to $\Sigma$ preserving the measure $\nu$ (in the focusing case, for each choice of cutoff function $\chi$).
\end{proposition}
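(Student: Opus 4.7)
The natural candidate for $\Sigma$ is the set of $v\in\mathcal{S}'(\mathbb{R}^2)$ for which (\ref{nls22}) with $u(0)=v$ has a unique global solution in $e^{-\mathrm{i}t\mathbf{H}}v+\bigcap_{T>0}\mathcal{X}^{\sigma,b,T}$; by the global well-posedness of Section \ref{gwp} this set has full $\mu$-measure (hence full $\nu$-measure). Time-translation invariance of (\ref{nls22}) together with the uniqueness statement in Theorem \ref{main} immediately give $\Phi_t(\Sigma)\subset\Sigma$ and the group law $\Phi_{s+t}=\Phi_s\circ\Phi_t$ on $\Sigma$. The whole matter therefore reduces to showing $\Phi_t^{*}\nu=\nu$, i.e.
\begin{equation}\label{pln1}\int F\circ\Phi_t\,\mathrm{d}\nu \;=\; \int F\,\mathrm{d}\nu\end{equation}
for every bounded continuous $F$ on $\mathcal{C}([-T,T],\mathcal{H}^{-\delta}(\mathbb{R}^2))$.

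To exploit the Hamiltonian structure from Section \ref{gwp} I would extend the finite-dimensional flow $\Phi_{2^k,t}$ to all of $\mathcal{S}'$ by
\[\tilde\Phi_{2^k,t}(g) := \Phi_{2^k,t}(g_{2^k}^{\circ}) + e^{-\mathrm{i}t\mathbf{H}}g_{2^k}^{\perp}.\]
Two facts combine to show that $\tilde\Phi_{2^k,t}$ preserves $\nu_{2^k}=\nu_{2^k}^{\circ}\otimes\mu_{2^k}^{\perp}$: (i) $\Phi_{2^k,t}$ preserves $\nu_{2^k}^{\circ}$, as proved in Section \ref{gwp} via Liouville's theorem and conservation of $\|g\|_{L^2}^2$ and $E$ on $V_{2^k}$; (ii) the free propagator restricted to $V_{2^k}^{\perp}$ multiplies each Hermite coefficient by the unit-modulus phase $e^{-\mathrm{i}(4j+2)t}$, leaving invariant the rotation-symmetric product Gaussian $\mu_{2^k}^{\perp}$. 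For every bounded Borel $F$ one therefore has the truncated identity
\begin{equation}\label{pln2}\int F\circ\tilde\Phi_{2^k,t}\,\mathrm{d}\nu_{2^k} \;=\; \int F\,\mathrm{d}\nu_{2^k}.\end{equation}

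The next step is to pass to the limit in (\ref{pln2}) along the subsequence $\{k_l\}$ from Section \ref{gwp} (chosen so that $\sum_l(\nu\circ f)(\Omega_{k_l}^{c})<\infty$). Total-variation convergence $\nu_{2^k}\to\nu$ from Section \ref{gib} replaces (\ref{pln2}) by $\int F\circ\tilde\Phi_{2^{k_l},t}\,\mathrm{d}\nu = \int F\,\mathrm{d}\nu + o(1)$, so (\ref{pln1}) follows once we establish $\int F\circ\tilde\Phi_{2^{k_l},t}\,\mathrm{d}\nu\to\int F\circ\Phi_t\,\mathrm{d}\nu$. Borel--Cantelli applied to $\{\omega\notin\Omega_{k_l}\}$ places $\omega$ in $\Omega_{k_l}$ for all large $l$, a.s., so that $\tilde\Phi_{2^{k_l},t}(u)$ is well-defined; Fatou applied to the uniform exponential bound (\ref{integrand}) gives $\liminf_l \|u_{k_l}-e^{-\mathrm{i}t\mathbf{H}}f_{2^{k_l}}^{\circ}\|_{\mathcal{X}^{\sigma,b,T}}<\infty$ a.s.; and Proposition \ref{appppp} then upgrades this liminf bound along an $\omega$-dependent sub-subsequence into strong convergence $\tilde\Phi_{2^{k_l},t}(u)\to\Phi_t(u)$ in $\mathcal{X}^{\sigma,b,T}\hookrightarrow\mathcal{C}([-T,T],\mathcal{H}^{-\delta})$.

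The main obstacle is that this convergence is a priori only along a sub-subsequence depending on $\omega$, whereas dominated convergence asks for a single deterministic subsequence. I would resolve this via the standard subsequence-of-subsequence principle: combining Proposition \ref{appppp} with a diagonal extraction against a countable continuity-determining family of test functionals $F$, and with the uniform integrability furnished by (\ref{integrand}), every subsequence of the measures $\tilde\Phi_{2^{k_l},t}^{*}\nu$ admits a further weakly convergent sub-subsequence with limit $\Phi_t^{*}\nu$. Hence the full sequence converges weakly to $\Phi_t^{*}\nu$; combined with the identity $\tilde\Phi_{2^{k_l},t}^{*}\nu_{2^{k_l}}=\nu_{2^{k_l}}\to\nu$, this yields $\Phi_t^{*}\nu=\nu$ and closes the argument. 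The focusing case proceeds identically, the fixed cutoff $\chi$ entering only through the density $\rho_{2^k}$ in the construction of $\nu_{2^k}$ and being absorbed into the total-variation convergence of Proposition \ref{intg}.
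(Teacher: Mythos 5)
Your reduction to a truncated invariance identity plus the total-variation convergence $\nu_{2^{k}}\to\nu$ is fine, but the limiting step does not close, and it is the heart of the matter. What Fatou applied to (\ref{integrand}) gives is only $\liminf_{l}\eta_{k_{l}}(\omega)<\infty$ a.s., i.e.\ for a.e.\ $\omega$ there is an \emph{$\omega$-dependent} sub-subsequence along which the truncated solutions are uniformly bounded, and Proposition \ref{appppp} then yields the convergence (\ref{application}) only along that $\omega$-dependent sub-subsequence. The subsequence-of-subsequences principle you invoke requires the opposite quantifier order: for \emph{every} deterministic subsequence of $\{k_{l}\}$ you must exhibit a further \emph{deterministic} sub-subsequence along which $\tilde\Phi_{2^{k},t}(f(\omega))\to\Phi_{t}(f(\omega))$ for a.e.\ $\omega$, and nothing in your ingredients provides this. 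The bound (\ref{integrand}) is an expectation bound for each $k$ separately; for a fixed threshold $A$ the resulting tail estimates $c_{1}e^{-c_{2}A^{c_{3}}}$ are uniform in $k$ but not summable over $k$, so Borel--Cantelli does not give an a.s.\ bound holding for all large $k$ along any prescribed subsequence, and without such a bound Proposition \ref{appppp} cannot be applied along it (your Borel--Cantelli remark only concerns the sets $\Omega_{k_{l}}$ of full $\nu_{2^{k_{l}}}\circ f$-measure, not the size of the truncated solutions). Hence the claimed weak convergence $\tilde\Phi_{2^{k_{l}},t}^{*}\nu\to\Phi_{t}^{*}\nu$, and with it your identity $\Phi_{t}^{*}\nu=\nu$, is not established.

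There is a second, smaller gap: $\Phi_{t}(\Sigma)\subset\Sigma$ and the group law are not ``immediate'' from Theorem \ref{main}, because membership of $\Phi_{t}(g)$ in your $\Sigma$ requires uniqueness (in particular backward in time) for data of the form $e^{-\mathrm{i}t\mathbf{H}}f(\omega)+h$ with $h\in\mathcal{H}^{\sigma}$, a local statement not contained in the a.s.\ uniqueness at $f(\omega)$; the paper secures it by intersecting with the auxiliary set $\Sigma_{2}=f(\liminf_{i}\Omega_{\gamma 2^{-i}}^{c})+\mathcal{H}^{\sigma}$ and rerunning the short-time contraction argument there. For the invariance itself the paper avoids your problematic limit altogether: it proves $\nu(\Phi_{t}(E))\geq\nu(E)$ for $|t|$ small and $E$ \emph{compact} inside a set $\Pi$ of bounded $\mathcal{H}^{\sigma}$-perturbations of good data, combining the finite-dimensional identity (\ref{recur0}) with the approximation (\ref{final2}), which is proved \emph{uniformly} on $\Pi$ ``from scratch'' (no a priori bounds on the truncated solutions are needed because the time step is small relative to the perturbation size $A$); general measurable $E$ is then reached by inner regularity of the measure (via the Borel injection $\Psi_{1}$), and general $t$ by the group property. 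If you want to salvage a test-function version of your scheme, you would have to prove convergence in $\nu$-measure of $\tilde\Phi_{2^{k},t}$ to $\Phi_{t}$ for small $t$ on such sets $\Pi$, which amounts to reproving (\ref{final2}); the Fatou/subsequence route is not a substitute for it.
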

\begin{proof} We only consider the defocusing case. In the focusing case we need to take another countable intersection corresponding to the cutoff $\chi$ chosen, but otherwise the proof is completely analogous. Clearly the set $\Omega_{T}$ in Proposition \ref{longest} and Lemma \ref{longest2} can be chosen so that $e^{-\mathrm{i}t\mathbf{H}}f(\Omega_{T}^{c})=f(\Omega_{T}^{c})$.

We define $\Sigma=\Sigma_{1}\cap\Sigma_{2}$, where $\Sigma_{1}$ is the set of all $g\in\mathcal{S}'(\mathbb{R}^{2})$ so that (\ref{nls2}) (with initial data $u(0)=g$) has a unique solution\footnote[6]{It is a bit vague to say $u$ is a ``solution'' when $g$ is only a distribution; but since we are considering $\Sigma_{2}$ also, we can assume here $e^{-\mathrm{i}t\mathbf{H}}g\in L_{t,x}^{q}$ on any finite time interval, for appropriate $q$, and then the definition of $\Sigma_{1}$ becomes rigorous.} $u$ on $\mathbb{R}$ that belongs to $e^{-\mathrm{i}t\mathbf{H}}f(\omega)+\mathcal{X}^{\sigma,b,T}$ for all $T>0$. This has full $\mu$ measure due to the global well-posedness part of Theorem \ref{main}. Also $\Sigma_{2}$ is defined to be $\Sigma_{2}=f(\liminf_{i\to\infty}\Omega_{\gamma2^{-i}}^{c})+\mathcal{H}^{\sigma}$, and this also has full $\mu$ measure for small enough $\gamma$ due to our control on $\mathbb{P}(\Omega_{T})$. Clearly $\Sigma$ has full $\mu$ measure, and $\Phi_{t}$ is uniquely defined on $\Sigma$. If we can prove $\Phi_{t}(\Sigma)\subset\Sigma$, then they obviously form a (measurable) one-parameter group. Clearly $\Phi_{t}(\Sigma)\subset\Sigma_{2}$ since for a solution $u$ we have $u(t)\in e^{-\mathrm{i}t\mathbf{H}}u(0)+\mathcal{H}^{\sigma}$. To prove $\Phi_{t}(\Sigma)\subset\Sigma_{1}$, we only need to prove that if $u$ is a solution of (\ref{nls22}) with $u(0)\in\Sigma_{2}$, then it is automatically unique. Since all $u(t)\in\Sigma_{2}$, by bootstrap arguments we only need to prove short time uniqueness. Write $u(0)=f(\omega)+h$ with $\|h\|_{\mathcal{H}^{\sigma}}=A$ and $\omega\not\in \Omega_{c2^{-i}}$ for all large enough $i$. Repeating the extension argument in Proposition \ref{appppp}, we see for $i$ large enough depending on $A$, $\omega\not\in\Omega_{c2^{-i}}$ and the solution is unique for $|t|\leq c2^{-i}$. This proves the existence of $\Sigma$.

Now we only need to prove that for each measurable set $E\subset\Sigma$ and $t\in\mathbb{R}$, we have \begin{equation}\label{final}\nu(\Phi_{t}(E))\geq \nu(E).\end{equation} By a limiting argument we may assume\[E\subset\Pi=\Sigma_{1}\cap\bigg(\{h:\|h\|_{\mathcal{H}^{\sigma}}\leq A\}+\bigcap_{i\geq i_{0}}f(\Omega_{c2^{-i}}^{c})\bigg).\]We may also assume that $|t|$ is small enough.

Let $T$ be small enough depending on $i_{0}$ and $A$, we only need to prove (\ref{final}) for $|t|\leq T$. Define $\Psi(g)=u-e^{-\mathrm{i}t\mathbf{H}}g$, where $u$ is the solution to (\ref{nls2}) with initial data $g$, and consider the mapping \[\Psi_{1}:\,\,\Pi\to\mathcal{X}^{\sigma,b,T}\times\mathbb{C}^{\infty}:\,\,\,\,g\mapsto(\Psi(g),(\langle g, e_{k}\rangle)_{k\geq 0}),\]where in $\mathbb{C}^{\infty}$ we use the standard metric. This mapping is clearly injective (thus it induces a metric on $\Pi$) and, as will be explained in Remark \ref{explain}, its image is a Borel set of the product space (denoted by $Y$). By a theorem in measure theory (see \cite{Ha50}), the finite Borel measure $\nu\circ \Psi_{1}^{-1}$ on the complete separable metric space $Y$ is regular. For each measurable set $E\subset\Pi$ we can find a compact set $K\subset \Psi_{1}(E)$ so that $(\nu\circ \Psi_{1}^{-1})(\Psi_{1}(E)-K)<\epsilon$, thus $\Psi_{1}^{-1}(K)\subset E$ is compact in the induced metric and $\nu(E-\Psi_{1}^{-1}(K))<\epsilon$. Therefore, we only need to prove (\ref{final}) for compact sets $E\subset\Pi$. By the invariance of $\nu_{2^{k}}^{\circ}$ under the solution flow $\Phi_{2^{k},t}$ we have\[\nu_{2^{k}}\big(\big\{g:g_{2^{k}}^{\circ}=\Phi_{2^{k},t}(h_{2^{k}}^{\circ}),\,\,\,h\in E\big\}\big)\geq \nu_{2^{k}}(E).\] Let $k\to\infty$, noticing that the total variance of $\nu_{2^{k}}- \nu$ tends to zero, we only need to prove that\[\limsup_{k\to\infty}\big\{g:g_{2^{k}}^{\circ}=\Phi_{2^{k},t}(h_{2^{k}}^{\circ}),\,\,\,h\in E\big\}\subset\Phi_{t}(E).\]
Now suppose that for a subsequence $k_{j}\uparrow\infty$, we have $g_{2^{k_{j}}}^{\circ}=\Phi_{2^{k_{j}},t}((h_{k_{j}})_{2^{k_{j}}}^{\circ})$, and by compactness, assume $h_{k_{j}}\to h$ with respect to the induced metric. We are going to prove $g=\Phi_{t}(h)$.

First of all, we have \begin{equation}\label{final2}\lim_{k\to\infty}\big\|\Phi_{2^{k},t}(h_{2^{k}}^{\circ})-\Phi_{t}(h)+e^{-\mathrm{i}t\mathbf{H}}h_{2^{k}}^{\perp}\big\|_{\mathcal{H}^{\sigma}}=0,\end{equation} uniformly for $|t|\leq T$ and $h\in E$. In fact, if $T$ is small enough, we may assume $h=h_{1}+h_{2}$, where $h_{1}\in f(\Omega_{T'}^{c})$, $2T\leq T'\leq 4T$, and $\|h_{2}\|_{\mathcal{H}^{\sigma}}\leq A$. Since $T'$ is small enough depending on $A$, we can almost repeat\footnote[7]{Actually we do not have the \emph{a priori} bound on the nonlinear part of truncated equations, but since $h_{1}\in\Omega_{T'}$ with $T'$ small depending on $A$, it is not hard to get this from scratch.} the proof of Proposition \ref{appppp} to get that the $\mathcal{X}^{\sigma,b,T'}$ norm tends to $0$. Since the $\mathcal{X}^{\sigma,b,T'}$ norm is not less than the spacial $\mathcal{H}^{\sigma}$ norm at time $t$, (\ref{final2}) follows.

From (\ref{final2}) we get\[\lim_{j\to\infty}\big\|g_{2^{k_{j}}}^{\circ}-(\Phi_{t}(h_{k_{j}}))_{2^{k_{j}}}^{\circ}\big\|_{\mathcal{H}^{\sigma}}=0,\] and we only need to prove \begin{equation}\label{final3}\lim_{k\to\infty}\big\|(\Phi_{t}(h_{k_{j}}))_{2^{k_{j}}}^{\circ}-\Phi_{t}(h)_{2^{k_{j}}}\big\|_{\mathcal{H}^{\sigma}}=0.\end{equation} But since $h_{k_{j}}\to h$ with respect to the induced metric, we only need to prove that $\|(h_{k_{j}})_{2^{k_{j}}}^{\circ}-h_{2^{k_{j}}}^{\circ}\|_{\mathcal{H}^{\sigma}}\to 0$.

For $i\geq j$ we have\[\big(\Phi_{2^{k_{i}},t}((h_{k_{i}})_{2^{k_{i}}}^{\circ})\big)_{2^{k_{j}}}^{\circ}=g_{2^{k_{j}}}^{\circ}=\Phi_{2^{k_{j}},t}((h_{k_{j}})_{2^{k_{j}}}^{\circ}),\] and by using (\ref{final2}) once more we see that\[(\Phi_{t}(h_{k_{i}}))_{2^{k_{j}}}^{\circ}=(\Phi_{t}(h_{k_{j}}))_{2^{k_{j}}}^{\circ}+o(1),\] as $i\geq j\to\infty$. Again using that $h_{k_{j}}\to h$, we deduce\begin{equation}\label{final999}\lim_{i\geq j\to\infty}\|(h_{k_{i}}-h_{k_{j}})_{2^{k_{j}}}^{\circ}\|_{\mathcal{H}^{\sigma}}= 0.\end{equation} In particular, we see that $\lim_{i\to\infty}(h_{k_{i}})_{2^{k_{j}}}^{\circ}$ exists in $\mathcal{H}^{\sigma}$ for each $j$. By the definition of the metric, this limit must be $h_{2^{k_{j}}}^{\circ}$. Therefore we get \begin{equation}\label{final1000}\lim_{i\to\infty}\|(h-h_{k_{i}})_{2^{k_{j}}}^{\circ}\|_{\mathcal{H}^{\sigma}}=0.\end{equation} Combining (\ref{final999}) with (\ref{final1000}), we finally see that $\lim_{j\to\infty}\|(h_{k_{j}})_{2^{k_{j}}}^{\circ}-h_{2^{k_{j}}}^{\circ}\|_{\mathcal{H}^{\sigma}}=0$. This completes the proof of Theorem \ref{main}.
\end{proof}

\begin{remark}\label{explain}
To show that $\Psi_{1}(\Pi)$ is a Borel set in the product metric space, we only need to show that $\Psi$ is injective, $\Psi(\Pi)$ is a Borel set in $\mathcal{X}^{\sigma,b,T}$, and the map $\Psi^{-1}:\Psi(\Pi)\to\Pi$ is Borelian. To this end we notice\begin{equation}\label{inj}\Psi(g)=-\mathrm{i}\int_{0}^{t}e^{-\mathrm{i}(t-s)\mathbf{H}}\big(|u(s)|^{p-1}u(s)\big)\,\mathrm{d}s,\end{equation} where $u=u(g)$ is the solution map of (\ref{nls1}), and\footnote[8]{Here we also require $g\in \mathcal{H}^{-\epsilon}$ for appropriate $\epsilon$, so that $u\in\mathcal{C}(\mathbb{R},\mathcal{H}^{-\epsilon})$ in which $u(0)$ makes sense.} $g=u(0)$. Then we can decompose $\Psi$ as \[\Psi:g\mapsto u(g)\mapsto |u(g)|^{p-1}u(g)\mapsto \Psi(g),\] and see that at each step the mapping is injective, and the image of any Borel set is again Borelian (for example, the set $u(\Pi)$ can be characterized as the set of all $u$ so that $u-e^{-\mathrm{i}t\mathbf{H}}u(0)\in\mathcal{X}^{\sigma,b}$, $u$ satisfies equation (\ref{nls1}), and that $u(0)\in\Pi$, so it is Borelian). Hence the claim.
\end{remark}

\appendix

\section{Typical regularity on the support of $\mu$}\label{count}
In this appendix we shall prove that, if $\sigma\geq\frac{1}{2}$, then almost surely, $\mathbf{H}^{\frac{\sigma}{2}}f(\omega)$ is not a (locally integrable) function. More precisely, almost surely in $\mathbb{P}$, we have\begin{equation}\label{count01}\psi\cdot \mathbf{H}^{\frac{\sigma}{2}}f(\omega)\not\in L^{1}(\mathbb{R}^{2}),\end{equation} for \emph{all} smooth compactly supported $\psi$ that is not identically zero. 

To prove this, first notice that we can find a countable number of $\psi_{j}$ such that each is compactly supported and equals $1$ on some annular region $a<|x|<b$, and for any other $\psi$ there exists $\eta\in L^{\infty}$ and $j$ so that $\psi_{j}=\psi\cdot\eta$. So we only need to consider a fixed $\psi_{j}$ (which we write $\psi$ below) and assume it equals $1$ for $a<|x|<b$. Here we use an asymptotic formula of $\mathcal{L}_{k}^{0}$ proved in \cite{Er60}:\begin{equation}\label{count02}\mathcal{L}_{k}^{0}(z)=\frac{1}{\sqrt{2\pi}}(\nu z)^{-\frac{1}{4}}\cos\theta+O(\nu^{-\frac{3}{4}}),\end{equation} where $a^{2}< z<b^{2}$ and $\nu=4k+2$ is large, and \[\theta=\frac{\nu(\phi+\sin\phi)-\pi}{4},\,\,\,\,\,\,\,\,\phi=\cos^{-1}\frac{\nu-2z}{\nu}.\] From (\ref{count02}) we easily deduce that\[\mathcal{L}_{k}^{0}(z)=\frac{1}{\sqrt{2\pi}}(\nu z)^{-\frac{1}{4}}\cos(\sqrt{\nu z}-\frac{\pi}{4})+O(\nu^{-\frac{3}{4}}),\] and hence for each $k$\begin{equation}\label{count035}\|e_{k}\psi\|_{L^{1}}\gtrsim\int_{a^{2}< z<b^{2}}|\mathcal{L}_{k}^{0}(z)|\,\mathrm{d}z\gtrsim \nu^{-\frac{1}{4}}.\end{equation} Now we define the Gaussian random variable \[h_{M,N}(\omega)=\sum_{k=0}^{M}(4k+2)^{\frac{\sigma-1}{2}}g_{k}(\omega)\eta\big(\frac{\mathbf{H}}{N^{2}}\big)(e_{k}\psi),\] whose range lies in a finite dimensional space, and use Lemma \ref{fnq} to get the lower bound $\mathbb{P}(\|h_{M,N}(\omega)\|_{L^{1}}\geq cE_{M,N})\geq\frac{1}{10}$ with some absolute constant $c$, where\setlength\arraycolsep{2pt}
\begin{eqnarray}
E_{M,N}=\mathbb{E}(\|h_{M,N}(\omega)\|_{L^{1}})&=&\int_{\mathbb{R}^{2}}\mathbb{E}\bigg(\bigg|\sum_{k= 0}^{M}(4k+2)^{\frac{\sigma-1}{2}}g_{k}(\omega)e_{k,N}(x)\bigg|\bigg)\,\mathrm{d}x\nonumber\\
&\sim&\int_{\mathbb{R}^{2}}\bigg(\sum_{k=0}^{M}(4k+2)^{\sigma-1}|e_{k,N}(x)|^{2}\bigg)^{\frac{1}{2}}\,\mathrm{d}x\nonumber\\
&\gtrsim &\bigg(\sum_{k=0}^{M}(4k+2)^{\sigma-1}\|e_{k,N}\|_{L^{1}}^{2}\bigg)^{\frac{1}{2}},\nonumber
\end{eqnarray} and $e_{k,N}=\eta(N^{-2}\mathbf{H})(e_{k}\psi)$. Now for fixed $N$, we let $M\to\infty$ to get \[h_{M,N}\to\eta(N^{-2}\mathbf{H})(\mathbf{H}^{\frac{\sigma}{2}}f(\omega)\cdot\psi)\] in $\mathcal{S}$ almost surely, since for fixed $n$ (say $n\leq 3N$), the inner product $\langle e_{n},e_{k}\psi\rangle$ is rapidly decreasing in $k$ (using integration by parts). In particular we have a.s. $L^{1}$ convergence and hence (by taking upper limit of a sequence of sets)\[\mathbb{P}(\|\eta(N^{-2}\mathbf{H})(\mathbf{H}^{\frac{\sigma}{2}}f(\omega)\cdot\psi)\|_{L^{1}}\geq cE_{N})\geq\frac{1}{10},\] where\[E_{N}=\liminf_{M\to\infty}E_{M,N}\gtrsim \bigg(\sum_{k=0}^{\infty}(4k+2)^{\sigma-1}\|e_{k,N}\|_{L^{1}}^{2}\bigg)^{\frac{1}{2}}.\] By the uniform boundedness of $\eta(N^{-2}\mathbf{H})$, we know $\eta(N^{-2}\mathbf{H})g\to g$ in $L^{1}$ for any $g\in L^{1}$, so we have\[\liminf_{N\to\infty}E_{N}\gtrsim \bigg(\sum_{k=0}^{\infty}(4k+2)^{\sigma-1}\|e_{k}\psi\|_{L^{1}}^{2}\bigg)^{\frac{1}{2}}\gtrsim \bigg(\sum_{k=0}^{\infty}(4k+2)^{\sigma-\frac{3}{2}}\bigg)^{\frac{1}{2}}=\infty,\] due to (\ref{count035}). Now we take another upper limit, and see that with probability $\geq\frac{1}{10}$, we have\begin{equation}\label{count04}\limsup_{N\to\infty}\|\eta(N^{-2}\mathbf{H})(\mathbf{\mathbf{H}}^{\frac{\sigma}{2}}f(\omega)\cdot\psi)\|_{L^{1}}=\infty.\end{equation} Now (\ref{count04}) implies (\ref{count01}), again because of the uniform boundedness of $\eta(N^{-2}\mathbf{H})$ on $L^{1}$. Therefore we have proved that (\ref{count01}) holds with positive probability. Since it is clearly a tail event (because $e_{k}\cdot\psi$ themselves are Schwartz functions), it must hold with probability one.

\end{document}